\documentclass[opre,sglanonrev]{informs4}
\usepackage{hyperref}
\hypersetup{
    colorlinks=true,
    linkcolor=blue}

\usepackage{eqndefns-left}% For checking the display equation width and equation environment definitions %
\RequirePackage{tgtermes}
\RequirePackage{newtxtext}
\RequirePackage{newtxmath}
\RequirePackage{bm}
\RequirePackage{endnotes}
\usepackage[utf8]{inputenc}

\OneAndAHalfSpacedXII 

\usepackage{natbib}
 \bibpunct[, ]{(}{)}{,}{a}{}{,}%
 \def\bibfont{\small}%
\EquationsNumberedThrough    % Default: (1), (2), ...
\TheoremsNumberedThrough     % Preferred (Theorem 1, Lemma 1, Theorem 2)
\ECRepeatTheorems  %  

\usepackage{tabularx}
\usepackage{multirow}
\usepackage{url}
\newif\ifrepeattheo
\repeattheotrue
\usepackage{anyfontsize}
\usepackage{thm-restate}
\usepackage{enumitem}
\usepackage[capitalise]{cleveref}

\newcommand{\pointtoproof}[1]{$\longrightarrow$ See page \pageref{#1} for the proof.}
\begin{document}

\TITLE{Convex duality contracts for production-grade mathematical optimization}

\ARTICLEAUTHORS{%

\AUTHOR{Juan Pablo Vielma,\textsuperscript{a} Ross Anderson,\textsuperscript{b} Joey Huchette\textsuperscript{c}}
\AFF{Google Research,
\textsuperscript{a}
 \EMAIL{jvielma@google.com};
 \textsuperscript{b}
 \EMAIL{rander@google.com};
 \textsuperscript{c}
 \EMAIL{jhuchette@google.com};
}

} 

\ABSTRACT{
Deploying mathematical optimization in autonomous production systems requires precise contracts for objects returned by an optimization solver. Unfortunately, conventions on dual solution and infeasibility certificates (rays) vary widely across solvers and classes of problems. This paper presents the theoretical framework used by MathOpt (a domain-specific language developed and used at Google) to unify these notions. We propose an abstract primal-dual pair based on a simplified Fenchel duality scheme that allows for the mechanical derivation of dual problems and associated contracts for all classes of problems currently supported by MathOpt (including those with linear and quadratic objectives plus linear, conic, quadratic, and two-sided linear constraints). We also show how these contracts can improve clarity of complementary-slackness based optimality conditions for certain classes of problems.
}
\maketitle
\vfill
\pagebreak
\section{Introduction}

A domain-specific language (DSL) for mathematical optimization provides an interface through which users can model and solve complicated decision-making problems. A DSL provides an easy-to-use modeling interface and serves as a bridge between this higher-level description of the problem and an underlying optimization solver that is used to produce an answer. The value of this bridge grows as the range of problems you can model, and the number of solvers you can use to produce an answer, grows. However, this growth makes abstraction and unification harder.

MathOpt \citep{MathOpt} is an open-source DSL for mathematical optimization developed by, and used extensively at, Google. Because MathOpt is used  widely in autonomous production systems, it needs to provide \emph{contracts} that are (i) precise and complete, (ii) easy to follow by users without a deep theoretical optimization background, and (iii) amenable to standard software engineering practices like unit testing (e.g. \cite{kanewala2014testing,winters2020software}).

Convex optimization duality is an extremely versatile tool to certify optimality, unboundedness and infeasibility of an optimization problem and is at the core of most convex optimization solvers.  However, there are a plethora of possible expressions for duality, depending on the particular flavor of optimization problem (e.g., linear, quadratic, conic, etc.) that you are considering. Indeed, even linear programming duality depends on the particular ``standard form'' you use (see Section~\ref{LP:properties:section}), meaning that different solvers can provide different duality contracts, even for the same class of problems. As a result, duality was the thorniest part of the MathOpt contract to get right.

This work presents the duality framework developed and used within MathOpt. This takes the form of an abstract primal-dual optimization pair, drawing inspiration from recent work \citep{rockafellar2023augmented,roos2020universal}. The main goals of this framework are to:
\begin{enumerate}[label=G\arabic*]
    \item\label{goal:precise} \emph{Have precise, mechanical and modular contracts:} The framework provides a clear, formal mathematical contract that can be efficiently implemented in code.
        \item\label{goal:expressive} \emph{Be fully expressive, generic and extensible:} The framework provides contracts for all dual problem properties and uses (e.g. certifying optimality, unboundedness and infeasibility), and they are applicable for all objective functions and constraints currently supported by MathOpt. 
    \item\label{goal:reduce}  \emph{Reduce to existing duality frameworks}: The framework reduces to standard textbook and solver-specific duality derivations through simple recipes. In particular, all duality information provided by MathOpt's supported solvers can be communicated to the user via the framework. 
\end{enumerate}

To the best of our knowledge, ours is the first duality framework to offer all of this together.

The primal problem for MathOpt's duality framework is  given by
\begin{equation}\label{primalproblem}
\operatorname{opt}_P:=\inf_{x\in \mathbb{R}^J} \; c\cdot x+\sum_{i\in I} g_i\left(F^i(x)\right)
\end{equation}
where $J$ and $I$ are finite sets\footnote{We use abstract sets instead of ordered sets like $\{1,\ldots,n\}$ to keep the notation concise and mirror what is supported in MathOpt.}, $c\in\mathbb{R}^J$ and for each $i\in I$, $M_i$ is a finite set, $F^i:\mathbb{R}^J\to \mathbb{R}^{M_i}$ is a continuously differentiable function and $g_i:\mathbb{R}^{M_i} \to {\mathbb{R}\cup\{+\infty\}}$ is a proper closed convex function.

MathOpt's dual for \eqref{primalproblem} is a simplification of Fenchel's dual that exploits the splitting into $F^i$ and $g_i$ components (see Section~\ref{connectiontofenchelsection})  and is given by
\begin{subequations}\label{dualproblem}
\begin{alignat}{3}
\operatorname{opt}_D:=&\sup_{y\in \mathbb{R}^{M}, w\in \mathbb{R}^{I\times J}}&   -\sum_{i \in I}   F^i_\nabla\left(-y^i, w^i\right)&+g^*_i\left(-y^i\right)\label{dualproblem:obj}\\
&s.t.\notag&&\\
&&\sum_{i\in I}\nabla F^i\left(w^i\right)^T y^i&= c,&\label{stationarity}
\end{alignat}
\end{subequations}
where  $M:=\bigtimes_{i\in I}M_i$, $\nabla F^i:\mathbb{R}^J\to \mathbb{R}^{M_i\times J}$ is the \emph{Jacobian} of $F^i$ given by 
$(
\nabla F^i(x))_{k,j}=
\frac{\partial F^i_k}{\partial x_j}(x)$ for all  $k\in M_i$ $j\in J$,
$g^*_i:\mathbb{R}^{M_i}\to {\mathbb{R}\cup\{+\infty\}}$  is the convex conjugate of $g_i$ given by 
$g^*_i\left(\bar{v}\right):=\sup_{z\in \mathbb{R}^{M_i}}\left\{\bar{v}\cdot z-g\left(z\right) \right\}$
and $F^i_\nabla:\mathbb{R}^{M_i}\times\mathbb{R}^J\to \mathbb{R}\cup\{+\infty\}$ is given by $
F^i_\nabla\left(y^i, w^i\right):=\left(\nabla F^i\left(w^i\right)w^i -  F^i\left(w^i\right)\right)\cdot y^i$. Table~\ref{initialfunctions} details the functions needed to construct  \eqref{primalproblem}/\eqref{dualproblem} for all objective functions and constraints supported by MathOpt.

\newcolumntype{T}{>{\hsize=.18\hsize}X}
\newcolumntype{g}{>{\hsize=.10\hsize}X}
\newcolumntype{L}{>{\hsize=.17\hsize}X}
\newcolumntype{N}{>{\hsize=.12\hsize}X}
\newcolumntype{M}{>{\hsize=.23\hsize}X}
\newcolumntype{S}{>{\hsize=.20\hsize}X}
\begin{table}[htp!]
\centering
 \begin{tabularx}{\textwidth}{TgLNMS}
& $g_i(z)$ & $F_i(x)$ & $\nabla F\left(w^i\right)$ & $F^i_\nabla\left(-y^i, w^i\right)$&$g_i^*\left(-y^i\right)$\\
  \hline
 Linear ineq. &$\delta_{\mathbb{R}^{M_i}_+}(z)$ & $Ax-b$ & $A$ & $-b\cdot y^i$&$\delta_{\mathbb{R}^{M_i}_-}\left(-y^i\right)$\\
  Conic constr.&  $\delta_{K}(z)$ & $Ax-b$ & $A$ & $-b\cdot y^i$&$\delta_{K^\circ}\left(-y^i\right)$\\
  Two-sided ineq. &  $\delta_{[l,u]}(z)$ & $a\cdot x$ & $a$ & $0$&$\max\left\{-l y^i,-u y^i\right\}$
  \\
  Quadratic obj. &  $z$ & $x^T Qx$ & $2Qw^i$ & $ -y^i\left(w^i\right)^T Qw^i$&$\delta_{\{1\}}(-y^i)$
    \\
  Quadratic ineq. & $\delta_{\mathbb{R}_-}(z)$ &$x^T Qx+q\cdot x-u$ & $2Qw^i+q$ & $-y^i \left(\left(w^i\right)^T Qw^i+u\right)$&$\delta_{\mathbb{R}_-}(-y^i)$
 \end{tabularx}
\caption{Functions associated with objective functions and constraints supported by MathOpt: linear inequalities $Ax\geq b$, conic constraints $Ax-b\in K$ (currently only for LP and SOC cones), two-sided linear inequalities $l\leq a\cdot x\leq u$, convex quadratic objectives $x^T Q x$ and convex quadratic inequalities $x^T Q x+q\cdot x\leq u$, where $A\in \mathbb{R}^{M_i\times J}$, $b\in \mathbb{R}^{M_i}$, $K\subseteq \mathbb{R}^{M_i}$ is a closed convex cone,  $K^\circ:=\{y\in \mathbb{R}^{M_i}\,:\,y\cdot x\leq 0\;\forall x\in K\}$ is the polar cone of $K$  , $l,u\in \mathbb{R}$ are such that $l\leq u$, $Q\in \mathbb{R}^{J\times J}$ is  positive semi-definite, $q\in \mathbb{R}^J$ and $\delta_{S}:\mathbb{R}^{L}\to \mathbb{R}\cup\{+\infty\}$  is the indicator of a closed convex set $S\subseteq \mathbb{R}^{L}$ (i.e. $\delta_{S}(z):=0$ if $z\in S$, and $\delta_{S}(z):=+\infty$ otherwise)}\label{initialfunctions}
\end{table}

With regards to goal~\ref{goal:precise}, dual problem \eqref{dualproblem} is clearly precise and modular. For instance, there is a group of variables $(y^i,w^i)$ for each module $i\in I$ from primal problem \eqref{primalproblem} and they are only connected through equation \eqref{stationarity}. Dual problem \eqref{dualproblem} can also be mechanically constructed if $\nabla F^i$ and $g_i^*$ can be mechanically constructed. Such assumption on $\nabla F^i$ is fairly standard and holds if $ F^i$ is affine or quadratic. The assumption on $g_i^*$ is less standard, but is not too restrictive and holds for all classes of objectives and constraints currently supported by MathOpt as detailed in  Table~\ref{initialfunctions}. 

With regards to goal \ref{goal:expressive} we clearly have that the framework is generic, extensible, and covers all objectives and constraints currently supported by MathOpt (see Table~\ref{initialfunctions}). Assuming weak duality holds for \eqref{primalproblem}/\eqref{dualproblem} (which we formalize in \cref{weakduality}), we can infer contracts for dual solutions as optimality certificates. We more formally describe these and other contracts in Section~\ref{LP:properties:section}.

With regards to goal \ref{goal:reduce}, we can use the functions in Table~\ref{initialfunctions} to recover standard dual forms as detailed in  Section~\ref{secificssection} and Appendix~\ref{max:sec} (Section~\ref{secificssection} also includes formal propositions for the derivations in Table~\ref{initialfunctions}). In particular, note that use of $-y^i$ in \eqref{dualproblem:obj} is to ensure we align with the standard sign conventions for conic and linear programming duality (see Sections~\ref{conic_constraints} and \ref{conicsectionmax}). 

Finally, two observations are important about the structure of \eqref{primalproblem}/\eqref{dualproblem}. First, note that all functions $g_i$ in Table~\ref{initialfunctions} are either indicator functions or the trivial identity function. We do not explicitly restrict functions $g_i$ to these cases as the more general case slightly simplifies the abstract descriptions and does not impose a practical burden (assuming $g_i^*$ has a mechanical construction as described above). In addition, the more general case allows extending   Table~\ref{initialfunctions} to some solver-supported non-differentiable objectives such as convex piecewise-linear functions (e.g. function $\max\left\{-l y^i,-u y^i\right\}$ that appears in the objective of \eqref{dualproblem} for two-sided linear inequalities\footnote{In particular, this makes  \eqref{dualproblem} a special case of \eqref{primalproblem}, but making the primal-dual pair fully symmetric is not part of our goals.}). 

The second observation is the apparent non-convexity in \eqref{dualproblem} arising from the product of the $w^i$ and $y^i$ variables. This is just an artifact of writing a mechanical description of the more evidently convex problem behind \eqref{dualproblem} and also appears in Wolfe's dual (see Section~\ref{lagrangiansection}) and when writing concrete formulations of Fenchel's dual (e.g. \cite{roos2020universal}). As shown in Section~\ref{secificssection} this apparent non-convexity can be eliminated for all functions in Table~\ref{initialfunctions}.

\subsection{Duality contracts in Linear Programming}\label{LP:properties:section}

The contracts we want for pair \eqref{primalproblem}/\eqref{dualproblem} are those available for linear programming (LP) in their full generality as we now detail. 
We begin with the ``textbook'' inequality-based LP given by
\begin{subequations}\label{basic:lp}
    \begin{alignat}{3}
\operatorname{opt}_{P}:=&\min_{{x}\in \mathbb{R}^J}& 
\quad {c}
\cdot &{x}\quad &\\
&s.t.\notag&&\\
&&
{A^i}{
x}&
\geq {b^i},
\quad 
\forall i\in I,&\label{basic:lp:constraints}
\end{alignat}
\end{subequations}
where $I$ and $J$ are finite sets, $c\in\mathbb{R}^J$, and for each $i\in I$, $M_i$ is a finite set, $A^i
\in \mathbb{R}^{M_i\times J}$, $b^i\in\mathbb{R}^{M_i}$, and the inequality for elements of $\mathbb{R}^{M_i}$ is taken component-wise. 

The dual of  \eqref{basic:lp} is given by 
\begin{subequations}\label{basic:dual:lp}
    \begin{alignat}{3}
\operatorname{opt}_{D}:=&\max_{{y}\in \mathbb{R}^M}& 
\quad 
\sum_{i
\in I}{b^i}
\cdot &{y^i}\quad &\\
&s.t.\notag&&\\
&&
\sum_{i
\in I}\left({A^i}\right)^T{
y^i}&
= {c},&\label{basic:dual:lp:stationarity}\\
&&
{
y^i}&
\geq 0\quad \forall i\in I.\label{basic:dual:lp:nn}
\end{alignat}
\end{subequations}

The first class of properties and contracts concerns solutions and can be specified as follows. For any primal solutions $\bar{x}\in \mathbb{R}^J$ and dual solution $\bar{y}\in \mathbb{R}^M$ have the following:
\begin{itemize}
    \item \textbf{Clear notion of feasibility:} Satisfying modular constraints \eqref{basic:lp:constraints} for $\bar{x}$, and satisfying modular constraints \eqref{basic:dual:lp:nn} plus the simple equation \eqref{basic:dual:lp:stationarity} for $\bar{y}$.
     \item \textbf{Clear notions of objective value even for infeasible solutions:} $c\cdot \bar{x}$ and $\sum_{i\in I}{b^i}\cdot {\bar{y}^i}$.
     \item \textbf{Weak duality:}\footnote{We did not include strong duality as it is less relevant when \emph{receiving} solutions from a solver (e.g. strong duality enables a solver to return an optimal solution, but it could still fail for numerical reasons or because of a user defined time limit).
     } $c\cdot \bar{x}\geq \sum_{i\in I}{b^i}\cdot {\bar{y}^i}$ for any feasible $\bar{x}$ and $\bar{y}$.
     \item \textbf{Optimality conditions based on modular complementary slackness (CS):} If  $\bar{x}$ and $\bar{y}$ satisfy
     \begin{equation}\label{basic:lp:cs}
         \bar{y}^i\cdot (A^i \bar{x} -b^i)=0\quad\forall i \in I,
     \end{equation}
     then $c\cdot \bar{x}=\sum_{i\in I}{b^i}\cdot {\bar{y}^i}$ even if either or both solutions are infeasible, and for feasible  $\bar{x}$ and $\bar{y}$ condition \eqref{basic:lp:cs} is equivalent to optimality of both solutions.
\end{itemize}

We included the fact that CS implies equal objectives even in the absence of feasibility because of its relevance when receiving sub-optimal basic solutions from simplex-based LP solvers. In particular, the notions of an objective value for infeasible solutions is critical for this CS property. Similarly, low complexity (in description and computational cost) notions of feasibility are critical to exploit weak duality to obtain primal and dual bounds (e.g. from sub-optimal solutions from primal-simplex and dual-simplex respectively). For LP, these notions of feasibility and objective values for feasible and infeasible solutions are straightforward, but they are less clear for \eqref{primalproblem}/\eqref{dualproblem}. In Sections~\ref{primal:sec} and \ref{dualsection} we  show how these notions (and the associated CS properties) can be extended to \eqref{primalproblem}/\eqref{dualproblem}. In Section~\ref{connectiontofenchelsection} we discuss how dual problem \eqref{dualproblem} reduces the complexity of checking dual feasibility as compared to a direct description of Fenchel's dual for \eqref{primalproblem}.

The second class of properties and contracts concerns rays and can be specified as follows:
\begin{itemize}
    \item \textbf{Primal ray:} A primal ray $\hat{x}\in \mathbb{R}^J$ satisfies \eqref{basic:lp:constraints} with $b^i=0$ for all $i\in I$ and $c\cdot\hat{x}<0$.
       \item \textbf{Dual ray:} A dual ray $\hat{y}\in \mathbb{R}^M$ satisfies \eqref{basic:dual:lp:stationarity} with $c=0$, \eqref{basic:dual:lp:nn} and $\sum_{i\in I}{b^i}\cdot {\hat{y}^i}>0$.       
\item \textbf{Unbounded primal:} \eqref{basic:lp} is unbounded if there is a primal feasible solution and a primal ray.
\item \textbf{Unbounded dual:}  \eqref{basic:dual:lp} is unbounded if there is a dual feasible solution and a dual ray.
\item \textbf{Infeasible Primal:} \eqref{basic:lp} is infeasible if there is a dual ray.
\item \textbf{Infeasible Dual:} \eqref{basic:dual:lp} is infeasible if there is a primal ray.
\end{itemize}
The first role of rays is to certify  unboundedness of their respective problems together with an respective solution. The second role is to independently certify unboundedness of the other problem. 
In particular, both problems could have their infeasibilities certified by rays at the same time.

Now, LP form \eqref{basic:lp} considers only $\geq$-based inequalities, but we certainly can consider $\leq$-based inequalities or $=$-based inequalities (i.e. equations). Including these additional inequalities just changes the sign-constraints \eqref{basic:dual:lp:nn} in the dual (for $\leq$-based inequalities the dual variables are non-positive and for equations they are sign-unrestricted). Less obvious is the fact that we can also consider inequalities that have lower and upper bounds that are not necessarily identical (i.e. are not necessarily equations), which we refer to as \emph{two-sided linear inequalities}. For instance, let $J$ and $M$ be finite sets,  $c\in\mathbb{R}^J$, $A
\in \mathbb{R}^{M\times J}$, $b\in\mathbb{R}^{M}$ and consider the LP given by 
\begin{subequations}\label{standard:ub:lp}
    \begin{alignat}{3}
\operatorname{opt}_{P}:=&\min_{{x}\in \mathbb{R}^J}& 
\quad {c}
\cdot &{x}\quad &\\
&s.t.\notag&&\\
&&
{A}{
x}
&= {b},\label{standard:ub:lp:eq}\\
&&l_j\leq x_j&\leq u_j\quad \forall j\in J.\label{standard:ub:lp:variable:bounds}
\end{alignat}
\end{subequations}
Splitting \eqref{standard:ub:lp} into $l_j\leq x_j$ and $x_j\leq u_j$ we can consider the problem as having three modules and associated sets of dual variables: one sign-unrestricted group for \eqref{standard:ub:lp:eq} and two sign-constrained groups for \eqref{standard:ub:lp:variable:bounds}. However, simplex-based solvers for \eqref{standard:ub:lp} return only two groups of dual variables. The first group of dual variables $\pi\in \mathbb{R}^{M}$ is the expected set of unrestricted dual variables for \eqref{standard:ub:lp:eq}. The second set of dual variables $r\in \mathbb{R}^{J}$  associated to \eqref{standard:ub:lp:variable:bounds} is also unrestricted and their elements $r_j$ are denoted the \emph{reduced costs} of variables $x_j$. We can deduce the sign of $r_j$ for basic solutions through rules that are natural in the context of the simplex algorithm (e.g. if basic pair $\bar{x}$/$(\bar{y},\bar{r})$ has $\bar{x}_j=l_j$, then $\bar{r}_j\geq 0$). However, these rules could be less intuitive for users unfamiliar with the simplex algorithm. Fortunately, using monotropic programming duality \citep{rockafellar1981monotropic} we can construct the following dual of \eqref{basic:lp} (e.g \cite{fourer94}) that suggests clearer contracts are possible.
\begin{subequations}\label{standard:ub:dual:lp}
    \begin{alignat}{3}
\operatorname{opt}_{D}:=&\max_{\pi\in \mathbb{R}^{M}, r\in  \mathbb{R}^{J}}& 
\quad 
{b}
\cdot  \pi +\sum_{j\in J} &\min\{l_j r_j, u_j r_j\}&\\
&s.t.\notag&&\\
&&
A^T \pi + r&
= {c}.&
\end{alignat}
\end{subequations}
As shown in Section~\ref{twosideddec}, \eqref{standard:ub:dual:lp} is also a special case of MathOpt dual problem  \eqref{dualproblem}. 
In this work we will show how all these concepts and associated contracts extend to our proposed duality framework. 

\subsection{Existing dual forms}\label{existing:sec}

\subsubsection{Conic dual and its extensions}\label{conic:ext:sec}

One dual framework that satisfies most of MathOpt's requirements is  
conic duality \citep{ben2001lectures}. One thing missing is that including two-sided linear inequalities, quadratic objectives and quadratic constraints requires  reformulations into conic forms. These reformulations are simple and apply to a wider class of constraints (e.g. \cite{aps2020mosek}), which is one of the reasons other DSLs like JuMP \citep{lubin2023jump} adopted the conic dual. However, the associated reformulations have at least two potential complications, which are  partially mitigated in JuMP through a constraint/solution ``bridge'' system  \citep{legat2022mathoptinterface}.

The first complication concerns the sign-unconstrained reduced costs for variables with upper and lower bounds. As discussed in Section~\ref{LP:properties:section} such a \emph{two-sided variable bound} could be split  into two \emph{one-sided variable bounds}, but the sign rules associated to this definition could be confusing. 
Fortunately, existing extensions of LP and conic duality (e.g. \cite{fourer94,karimi2020primal,karimi2024domain,rockafellar1981monotropic}) show how non-conic convex sets (e.g. intervals as in the case of two-sided variable bounds and linear inequalities) can be directly considered. These approaches fit within our proposed framework (see Sections~\ref{twosideddec} and \ref{dds:section}). 

The second complication concerns quadratic objectives and quadratic constraints. While the associated reformulations are simple, the resulting dual solutions can be confusing to non-expert users. For instance, the conic reformulation of a quadratic objective adds a new conic constraint and the presence of dual variables for that constraint can be surprising. Fortunately, conic duality can also be extended to consider quadratic objectives  \citep{vandenberghe2010cvxopt} in a way that also fits in our proposed framework (see Section~\ref{quadraticobjsec}). In contrast, quadratic constraints are more complicated and current approaches are closer to an extension of Lagrangian duality to conic constraints than an extension of conic duality to quadratic constraints so we will discuss them in detail in Section~\ref{lag:dual:sec}. 

Another thing missing  is extensibility to consider constraints and objectives without known conic reformulations. This includes non-linear objective functions and \emph{non-linear inequalities} (i.e. constraints defined as level sets of non-linear functions). Existing extensions for this case are closer to extensions of Lagrangian duality to conic constraints so we discuss them in detail in Section~\ref{lag:dual:sec}. 

\subsubsection{Lagrangian duality and its extensions}\label{lag:dual:sec}

One dual form that is extremely versatile is Lagrangian duality (e.g. \cite{boyd2004convex}) and particularly its extensions to consider non-differential functions. Examples of such extensions include those for problems that combine non-linear inequalities based on differentiable functions and second-order-cone constraints \cite{knitro} and for problems that combine quadratic objectives, quadratic inequalities and conic constraints \citep{MOIDual}. More generally, the generalized nonlinear programming (GNLP) framework of \cite{rockafellar2023augmented} details an extension that covers all convex optimization problems. 

One issue with Lagrangian duality is that the associated dual optimization problem is not of the same form as the primal optimization problem because it combines two optimization senses (e.g. if the primal has a $\min$ sense the Lagrangian dual has a $\max \min$ sense). It is well known that one of the senses of the Lagrangian dual can be eliminated to obtain a single sense version often denoted Wolfe's dual \citep{dorn1960duality,wolfe1961duality}. For instance, this is used in \cite{MOIDual} to show that the Lagrangian dual for quadratic objective  + conic constraints simplifies to the conic dual extension from  \cite{vandenberghe2010cvxopt}. However, as we detail in Section~\ref{lagrangiansection}, even after this simplification the dual does not have a modular structure because it contains the original primal variables and these directly interact with the dual variables associated to all constraints. Nonetheless, as we also detail in Section~\ref{lagrangiansection}, our proposed framework can be interpreted as a variant of the GNLP Lagrangian dual from \cite{rockafellar2023augmented}. Hence our framework also covers all convex optimization problems.  

Finally, closely related are the optimality conditions for two-sided linear and non-linear constraints considered in \cite{knitrotermination}. As we detail in Section~\ref{knitro:cs:section}, these can be interpreted as coming from the GNLP Lagrangian dual and hence are similar to our optimality conditions related to complementary slackness. However, as we also detail in Section~\ref{knitro:cs:section}, the conditions from \cite{knitrotermination} have similar issues around a precise and succinct description of dual variable sign-rules as those for reduced costs for variables with lower and upper bounds as noted in Section~\ref{LP:properties:section}.

\subsubsection{Fenchel dual}\label{fenchel:dual:sec}

Fenchel's duality theorem can be used to derive abstract dual forms for any convex optimization problem. In addition, as noted in
 \cite{roos2020universal}, these abstract dual forms can often be made concrete using calculus rules for the convex conjugate of a convex function. As detailed in Section~\ref{connectiontofenchelsection}, the only issue with this dual form is that its construction and detailed description of its properties is only mechanical after it is reduced to a concrete form for specific classes of problems. As we also detail in Section~\ref{connectiontofenchelsection}, our proposed framework can be interpreted as an intermediate simplification of the abstract and concrete versions of the dual in \cite{roos2020universal} that allows a generic, but mechanical description of all the properties we seek.

\subsection{Contributions and outline}

A first contribution of MathOpt's duality framework is a simplification of standard Fenchel duality framework to enable mechanical contracts for a wide range of constraints. As detailed in Section~\ref{connectiontofenchelsection} this simplification is achieved by exploiting the composite structure  (e.g. \cite{rockafellar2023augmented}) in primal problem \eqref{primalproblem}. The key observation here is that the Fenchel dual of  \eqref{primalproblem} includes functions $\left(g_i\circ F^i\right)^*$ (where $g_i\circ F^i(x)=g_i(F^i(x))$), which can be hard to describe mechanically as a function of $g_i\circ F^i$. However, we show that an appropriate tight approximation of $\left(g_i\circ F^i\right)^*$ can be mechanically described as a function of $\nabla F^i$ and $g_i^*$.

The second contribution is to extend the mechanical construction of dual \eqref{dualproblem} to mechanical contracts for all the associated uses of duality as described in Section~\ref{LP:properties:section}. In particular, these contracts can significantly improve clarity of complementary-slackness based optimality conditions for certain classes of constraints (e.g. see Section~\ref{knitro:cs:section}).

The remainder of this work is organized as follows. First, in Section~\ref{notation:section} we give some notation. In Section~\ref{primal:sec} we introduce definitions and contracts for primal solutions. We also detail some baseline assumptions for the results in the paper. Then, in Section~\ref{dualsection} we introduce definitions and contracts for dual solutions. In particular, \cref{weakduality} establishes weak duality for \eqref{primalproblem}/\eqref{dualproblem} and \cref{csoptimalitycorollary} establishes complementary-slackness based optimality conditions for \eqref{primalproblem}/\eqref{dualproblem}. Section~\ref{rayssection} introduces contracts for primal and dual rays and their role as unboundedness and infeasibility certificates. Section~\ref{primal:sec}, \ref{dualsection} and \ref{rayssection} present generic or abstract descriptions so in Section~\ref{secificssection} we make everything concrete for the objectives and constraints currently supported by MathOpt. In particular, Section~\ref{secificssection} shows how all associated contracts become mechanical and match standard dual forms when applicable. Finally, Section~\ref{connectiontodualssection} provides a comparison to other duality frameworks, especially those from  \cite{rockafellar2023augmented} and \cite{roos2020universal}. All proofs are relegated to Section~\ref{proofs:sec}. Throughout the paper we restrict to primal minimization problems for succinctness. The associated changes for primal maximization problems are simple and also mechanical and are detailed in Appendix~\ref{max:sec}.

\subsection{Notation}\label{notation:section}

For any finite set $L$ and $S\subseteq \mathbb{R}^L$ we let
$\delta_{S}:\mathbb{R}^{L}\to \mathbb{R}\cup\{+\infty\}$  be  the \emph{indicator function} of $S$ (i.e. $\delta_{S}(z):=0$ if $z\in S$, and $\delta_{S}(z):=+\infty$ otherwise) and   $\sigma_S:\mathbb{R}^L\to \mathbb{R}\cup\{+\infty\}$ be the \emph{support function} of $S$ (i.e. $\sigma_S(d):=\sup\{d\cdot z\,:\, z\in S\}$). For any $h:\mathbb{R}^L\to \mathbb{R}\cup\{+\infty\}$ we let $\operatorname{dom}(h):=\{z\in \mathbb{R}^{L}\,:\, h(z)<+\infty\}$ be the \emph{domain} of $h$ and $\operatorname{rge}(h):=\{h(z)\in \mathbb{R}\cup\{+\infty\}\,:\, z\in \mathbb{R}^L\}$ be the range of $h$. Finally, for any $F^i:\mathbb{R}^J\to \mathbb{R}^{M_i}$  and $g_i:\mathbb{R}^{M_i} \to {\mathbb{R}\cup\{+\infty\}}$  we let $g_i\circ F^i:\mathbb{R}^J\to \mathbb{R}\cup\{+\infty\}$ be the function given by $g_i\circ F^i(x)=g_i\left(F^i\left(x\right)\right)$.

\section{Primal problem definitions and assumptions}\label{primal:sec}

Because functions $g_i$ can take the value $+\infty$, they implicitly encode the constraints of \eqref{primalproblem}. 

\begin{definition}[Primal Feasibility]
A primal solution $\bar{x}\in \mathbb{R}^J$ is \emph{feasible} if $g_i(F^i(\bar{x}))<+\infty$  $\forall i\in I$. Primal problem \eqref{primalproblem} is feasible if it has a feasible primal solution. 
\end{definition}

When the only infinite valued functions are indicator functions (such as in Table~\ref{initialfunctions}) it makes sense to define an objective function that is finite valued also for infeasible solutions as follows. 

\begin{definition}[Primal objective value]
For any $h:\mathbb{R}^{L} \to {\mathbb{R}\cup\{+\infty\}}$ we let $h^0:\mathbb{R}^{L} \to {\mathbb{R}\cup\{+\infty\}}$ be the identically zero function if $h$ is an indicator function and $h$ otherwise. That is
\[
h^0(x):=\begin{cases}0 & \operatorname{rge}(h)\subseteq \{0,+\infty\}\\ h(x)&\text{o.w.}\end{cases}
\]
The objective value of any (feasible or infeasible) primal solution $\bar{x}\in \mathbb{R}^J$ is
\begin{equation}\label{primalobjmin}\operatorname{obj_P}(\bar{x}):=c\cdot \bar{x}+\sum_{i\in I} g_i^0\left(F^i\left(\bar{x}\right)\right).\end{equation}
\end{definition}

Finally, throughout the paper we make the following assumptions on functions $g_i$ and $F^i$.

\begin{assumption}\label{assumption}
For each $i\in I$,
\begin{subequations}\label{assumptioneq}
\begin{align}
\label{assumptioneq:gofproper}
&\exists \bar{x}^i\in \mathbb{R}^J \text{ s.t. }g_i\left(F^i\left(\bar{x}^i\right)\right)<+\infty,\text{ and}\\
\label{assumptioneq:fdiff}
    &F^i \text{ is continuously differentiable},\\ 
    \label{assumptioneq:gproper}
    &g_i 
    \text{ is a proper, closed and convex function},\\
    \label{assumptioneq:goffullyconvex}
    &g_i\left(F^i(x)+u\right) \text{ is jointly convex in $(x,u)$}.
\end{align}
\end{subequations}
\end{assumption}
Assumption \eqref{assumptioneq:gofproper}  excludes functions that independently cause infeasibility. The rest of the assumptions are always satisfied by the functions in Table~\ref{initialfunctions}. However, note that while \eqref{assumptioneq:goffullyconvex} is often satisfied for convex optimization problems, it is slightly stronger than convexity of $g_i\circ F^i$
\footnote{e.g. if $F^i(x)=x^2$ and $g_i(z)=\delta_{[0,1]}$, then  $g_i\circ F^i$ is convex, but  \eqref{assumptioneq:goffullyconvex} is not satisfied}.

\section{Dual problem definitions and properties}\label{dualsection}

The only explicit constraints in \eqref{dualproblem} are equations \eqref{stationarity}, which are often denoted \emph{stationarity} conditions. However,  because functions  $g_i^*$ in objective \eqref{dualproblem:obj} can also take the value $+\infty$, they implicitly encode additional constraints for \eqref{dualproblem}. Following the convention in KKT conditions, we partially separate these additional module-wise constraints from the joint stationarity condition \eqref{stationarity} (i.e. we do not include the stationarity condition when considering dual feasibility for a single $i\in I$).

\begin{definition}[Dual Feasibility]
For each $i\in I$  a (module-wise) dual solution $(\bar{y}^i,\bar{w}^i)\in \mathbb{R}^{M_i}\times \mathbb{R}^{J}$ is \emph{(module-wise) feasible} if and only if $g_i^*(-\bar{y}^i)<+\infty$.

A dual solution $(\bar{y},\bar{w})\in \mathbb{R}^{M}\times \mathbb{R}^{I\times J}$ is \emph{feasible} if and only if $(\bar{y}^i,\bar{w}^i)$ is feasible for each $i \in I$ and $(\bar{y},\bar{w})$ satisfies stationarity condition \eqref{stationarity}. 

Dual problem \eqref{dualproblem}  is feasible if and only if it has a feasible dual solution. 

\end{definition}
We also define a dual objective function  that is finite when a solution is infeasible only due to an indicator function $g_i^*$ in the dual objective function \eqref{dualproblem:obj} .

\begin{definition}[Dual objective function and value]\label{dual:obj:def}
Let $\mathcal{O}_{F^i,g_i}\left(y^i, w^i\right):=F^i_\nabla\left(y^i, w^i\right)+g^*_i\left(y^i\right)$,  so the dual objective function \eqref{dualproblem:obj} is equal to $-\sum_{i \in I} \mathcal{O}_{F^i,g_i}(-{y}^i, {w}^i)$.

The objective value of any (feasible or infeasible) dual solution $(\bar{y},\bar{w})\in \mathbb{R}^{M}\times \mathbb{R}^{I\times J}$ is
\begin{equation}\label{dualobjectivemin}
    \operatorname{obj_D}(\bar{y},\bar{w}):=-\sum_{i \in I} \mathcal{O}_{F^i,g_i}^0(-\bar{y}^i, \bar{w}^i),
\end{equation}
where $\mathcal{O}_{F^i,g_i}^0(\bar{y}^i, \bar{w}^i):=F^i_\nabla\left(\bar{y}^i, \bar{w}^i\right)+\left(g^*\right)^0\left(\bar{y}^i\right)$.

\end{definition}

Even without qualification conditions beyond \eqref{assumptioneq} we have the following weak duality result.
\begin{restatable}[Weak Duality]{theorem}{weakduality}\label{weakduality}
If $\bar{x}\in \mathbb{R}^J$ and $(\bar{y},\bar{w})\in \mathbb{R}^{M}\times \mathbb{R}^{I\times J}$  are such that $(\bar{y},\bar{w})$ satisfies stationarity condition \eqref{stationarity}, then
\begin{equation}\label{weakdualityone}c\cdot \bar{x}+\sum_{i\in I} g_i\left(F^i\left(\bar{x}\right)\right)\geq
-\sum_{i \in I}   \mathcal{O}_{F^i,g_i}(-\bar{y}^i, \bar{w}^i).
\end{equation}
In particular, if $\bar{x}$ and $(\bar{y},\bar{w})$  are feasible, then  
\begin{equation}\label{weakdualitytwo}\operatorname{obj_P}(\bar{x})\geq \operatorname{obj_D}(\bar{y},\bar{w}).\end{equation}
\end{restatable}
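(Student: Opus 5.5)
The plan is to prove \eqref{weakdualityone} module by module and then sum, using stationarity \eqref{stationarity} to absorb the linear term $c\cdot\bar{x}$. Since $(\bar{y},\bar{w})$ satisfies \eqref{stationarity}, we have $c\cdot \bar{x}=\sum_{i\in I}\bar{y}^i\cdot \nabla F^i(\bar{w}^i)\bar{x}$. Expanding $F^i_\nabla(-\bar{y}^i,\bar{w}^i)=-\bar{y}^i\cdot(\nabla F^i(\bar{w}^i)\bar{w}^i-F^i(\bar{w}^i))$, the claim \eqref{weakdualityone} follows by summing over $i$ the per-module inequality
\begin{equation*}
g_i\left(F^i(\bar{x})\right)\geq -\bar{y}^i\cdot\left(F^i(\bar{w}^i)+\nabla F^i(\bar{w}^i)(\bar{x}-\bar{w}^i)\right)-g_i^*(-\bar{y}^i).
\end{equation*}
If $g_i^*(-\bar{y}^i)=+\infty$, the right-hand side is $-\infty$ and there is nothing to prove; this uses that $F^i_\nabla$ is finite because $F^i$ is differentiable. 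Some care with the $+\infty$ arithmetic is needed when summing: any module with $g_i^*(-\bar{y}^i)=+\infty$ makes the whole right side of \eqref{weakdualityone} equal to $-\infty$. Properness in \eqref{assumptioneq:gproper} gives $g_i^*>-\infty$, so no $\infty-\infty$ issue arises on the dual side.

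So assume $g_i^*(-\bar{y}^i)<+\infty$, and drop the index $i$ and the bars. By the Fenchel--Young inequality applied at $z=F(\bar{x})$, we get $g(F(\bar{x}))\geq -y\cdot F(\bar{x})-g^*(-y)$. It therefore suffices to show the linearization bound $-y\cdot F(\bar{x})\geq -y\cdot\left(F(w)+\nabla F(w)(\bar{x}-w)\right)$. This bound holds if the function $x\mapsto -y\cdot F(x)$ is convex, by the gradient inequality for differentiable convex functions (using \eqref{assumptioneq:fdiff}).

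The main step, and the only place where \eqref{assumptioneq:goffullyconvex} enters, is establishing this convexity. I would consider the partial minimization
\begin{equation*}
\psi(x):=\inf_{u}\left\{g(F(x)+u)+y\cdot u\right\}.
\end{equation*}
The objective inside the infimum is jointly convex in $(x,u)$ by \eqref{assumptioneq:goffullyconvex}, plus a linear term, so $\psi$ is convex as a partial infimum of a jointly convex function. On the other hand, substituting $z=F(x)+u$ gives
\begin{equation*}
\psi(x)=-y\cdot F(x)+\inf_z\{g(z)+y\cdot z\}=-y\cdot F(x)-g^*(-y).
\end{equation*}
This is finite everywhere, since $g^*(-y)$ is finite and $g^*>-\infty$. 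Hence $-y\cdot F$ is a finite convex function, and this is exactly what is needed. I expect this identification to be the crux of the argument; the remaining steps are bookkeeping.

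For \eqref{weakdualitytwo}, feasibility of $\bar{x}$ means each $g_i(F^i(\bar{x}))$ is finite. If $g_i$ is an indicator function, then $g_i(F^i(\bar{x}))=0=g_i^0(F^i(\bar{x}))$; otherwise $g_i^0=g_i$. Hence $\operatorname{obj_P}(\bar{x})$ equals the left side of \eqref{weakdualityone}. Similarly, dual feasibility gives $g_i^*(-\bar{y}^i)<+\infty$, so $(g_i^*)^0(-\bar{y}^i)=g_i^*(-\bar{y}^i)$ and $\operatorname{obj_D}(\bar{y},\bar{w})$ equals the right side of \eqref{weakdualityone}. Since feasibility includes \eqref{stationarity}, \eqref{weakdualitytwo} follows from \eqref{weakdualityone}.
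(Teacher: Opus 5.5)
Your proof is correct. It has the paper's top-level structure: prove the module-wise inequality \eqref{Ofencheleq} with $y^i=-\bar{y}^i$, sum over $i$, and use \eqref{stationarity} to absorb $c\cdot\bar{x}$. Where you differ is in how you reach the module-wise inequality.

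The paper gets \eqref{Ofencheleq} in Corollary~\ref{Ofenchel} through a chain of three steps:
\begin{itemize}
\item Fenchel's inequality for the composite $g_i\circ F^i$;
\item the composite-conjugate bound $(g_i\circ F^i)^*(v)\leq (y^i\cdot F^i)^*(v)+g_i^*(y^i)$ from Theorem~\ref{simplecomposite};
\item the Legendre identity $h^*(\nabla h(w))=\nabla h(w)\cdot w-h(w)$ of Proposition~\ref{differentiableobjlemma} for $h=y^i\cdot F^i$. The convexity of this $h$ is imported as a black box from the last statement of Theorem~\ref{simplecomposite}.
\end{itemize}

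You instead apply Fenchel--Young to $g_i$ alone at $F^i(\bar{x})$, and the gradient inequality to $-\bar{y}^i\cdot F^i$. You then prove the needed convexity directly from \eqref{assumptioneq:goffullyconvex} via the inf-projection $\psi$. In effect you unroll the paper's chain, skip the intermediate quantity $(g_i\circ F^i)^*$, and give a self-contained proof of the cited convexity lemma. You also treat explicitly two points the paper's two-line proof leaves implicit: the case $g_i^*(-\bar{y}^i)=+\infty$, and the passage from \eqref{weakdualityone} to \eqref{weakdualitytwo} via $g_i^0$ and $(g_i^*)^0$.

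What the paper's route buys is reuse. Sandwiching $(g_i\circ F^i)^*$ with the same ingredients gives the comparison with Fenchel's dual in \eqref{connectiontofenchelcoroineq}. The same chain also drives the equality case behind Theorem~\ref{optimalityconditions} and the horizon-function bounds used for rays. For weak duality alone, your argument is shorter and needs no external results.
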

\pointtoproof{weakduality:proof}

Guaranteeing existence of a primal dual pair that satisfies the weak duality inequality as equality does require additional constraint qualification conditions. However, the following  module-wise optimality conditions do not require such constraint qualifications (see first observation below).

\begin{restatable}[Optimality Conditions]{theorem}{optimalityconditions}\label{optimalityconditions}
Let $\bar{x}\in \mathbb{R}^J$ and $(\bar{y},\bar{w})\in \mathbb{R}^{M}\times \mathbb{R}^{I\times J}$  be such that $(\bar{y},\bar{w})$ satisfies stationarity condition  \eqref{stationarity}. Then, the following conditions are equivalent
\begin{itemize}
    \item Primal-dual pair $(\bar{x},(\bar{y},\bar{w}))$ is  optimal with
    $c\cdot \bar{x}+\sum_{i\in I} g_i\left(F^i\left(\bar{x}\right)\right)= -\sum_{i \in I}   \mathcal{O}_{F^i,g_i}(-\bar{y}^i, \bar{w}^i)$.
    \item For all $i\in I$, 
    \begin{equation}\label{optimaliticonditioneq}g_i\left(F^i\left(\bar{x}\right)\right)+\mathcal{O}_{F^i,g_i}(-\bar{y}^i, \bar{w}^i)=-\left(\bar{y}^i\right)^T\nabla F^i\left(\bar{w}^i\right) \bar{x}.\end{equation}
\end{itemize}
In addition, if  $(\bar{x},(\bar{y},\bar{w}))$ is optimal, then  $(\bar{x},(\bar{y},(\bar{x})_{i\in I}))$ is also optimal.
\end{restatable}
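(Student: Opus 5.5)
The plan is to decompose the weak duality gap into a sum of module-wise nonnegative terms, exactly as in the proof of \cref{weakduality}. For each $i\in I$ define
\[
\Delta_i:=g_i\left(F^i\left(\bar{x}\right)\right)+\mathcal{O}_{F^i,g_i}(-\bar{y}^i, \bar{w}^i)+\left(\bar{y}^i\right)^T\nabla F^i\left(\bar{w}^i\right) \bar{x}.
\]
Summing over $i$ and using stationarity \eqref{stationarity}, $\sum_i (\bar{y}^i)^T\nabla F^i(\bar{w}^i)\bar{x}=c\cdot\bar{x}$. Hence the primal objective minus the dual objective equals $\sum_{i\in I}\Delta_i$. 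The key step is then to show $\Delta_i\geq 0$ for every $i$, which is the module-wise content of weak duality.

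To prove $\Delta_i\ge 0$, expand $\mathcal{O}_{F^i,g_i}(-\bar{y}^i,\bar{w}^i)=-\bar{y}^i\cdot(\nabla F^i(\bar{w}^i)\bar{w}^i-F^i(\bar{w}^i))+g_i^*(-\bar{y}^i)$. This gives
\[
\Delta_i=g_i(F^i(\bar{x}))+g_i^*(-\bar{y}^i)+\bar{y}^i\cdot\bigl(F^i(\bar{w}^i)+\nabla F^i(\bar{w}^i)(\bar{x}-\bar{w}^i)\bigr).
\]
By Fenchel--Young, $g_i(z)+g_i^*(-\bar{y}^i)\ge -\bar{y}^i\cdot z$ for any $z$. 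It therefore suffices to compare $g_i(F^i(\bar{x}))$ with $g_i$ evaluated at the linearization $L:=F^i(\bar{w}^i)+\nabla F^i(\bar{w}^i)(\bar{x}-\bar{w}^i)$, or to work directly with the joint convexity \eqref{assumptioneq:goffullyconvex}.

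The main obstacle is this step. My approach is to set $u:=F^i(\bar{x})-L$ and use \eqref{assumptioneq:goffullyconvex}. Joint convexity of $(x,u)\mapsto g_i(F^i(x)+u)$ implies that, whenever $g_i^*(-\bar{y}^i)<\infty$, the map $x\mapsto -\bar{y}^i\cdot F^i(x)$ is convex along directions where $-\bar{y}^i$ lies in the recession-relevant cone. Concretely, $-\bar y^i$ in $\operatorname{dom} g_i^*$ makes $g_i(\cdot+u)$ penalize $u$ in a way compatible with $\bar y^i$.

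A cleaner route is to show that $h(x):=-\bar{y}^i\cdot F^i(x)$ is convex on $\mathbb{R}^J$. Take $\phi(x,u)=g_i(F^i(x)+u)+\bar{y}^i\cdot u$. Its infimum over $u$ is $\inf_z\{g_i(z)+\bar y^i\cdot z\}-\bar y^i\cdot F^i(x)=-g_i^*(-\bar y^i)+h(x)$. Partial minimization of the jointly convex $\phi$ is convex, so $h$ is convex; this requires finiteness, which holds by \eqref{assumptioneq:gofproper} and feasibility. Convexity and differentiability of $h$ then give $h(\bar x)\ge h(\bar w^i)+\nabla h(\bar w^i)(\bar x-\bar w^i)$, that is, $-\bar y^i\cdot F^i(\bar x)\ge -\bar y^i\cdot L$. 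Combining this with Fenchel--Young at $z=F^i(\bar x)$ yields $\Delta_i\ge0$. The trivial case is when $g_i^*(-\bar y^i)=+\infty$, where $\Delta_i=+\infty$ (with $g_i(F^i(\bar x))>-\infty$).

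With $\Delta_i\ge0$ in hand, the equivalence follows. First, $\sum_i\Delta_i=0$ holds iff each $\Delta_i=0$, which is exactly \eqref{optimaliticonditioneq}. Second, equality of the primal and dual values together with \cref{weakduality} gives optimality of both. Conversely, the first bullet asserts equality, so the gap $\sum_i\Delta_i$ is zero.

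For the final claim, assume optimality, so $\Delta_i=0$ for each $i$. Replacing $\bar w^i$ by $\bar x$ changes $\Delta_i$ to $g_i(F^i(\bar x))+g_i^*(-\bar y^i)+\bar y^i\cdot F^i(\bar x)$, which is the Fenchel--Young gap alone. This is at most the original $\Delta_i$, because the dropped linearization term was nonnegative. Hence the new $\Delta_i$ is $0$.

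It remains to check that stationarity still holds with $w^i=\bar x$. Write $h_i(x):=-\bar y^i\cdot F^i(x)$, which is convex. Since $\Delta_i=0$ forces $h_i(\bar x)=h_i(\bar w^i)+\nabla h_i(\bar w^i)(\bar x-\bar w^i)$, the function $\sum_i h_i+c\cdot x$ is convex with zero gradient at each $\bar w^i$ appropriately. More carefully, $\Phi(x):=c\cdot x+\sum_i h_i(x)$ is convex. The tangent equality implies that $h_i$ is affine on the segment $[\bar w^i,\bar x]$.

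I would expect this last verification, that $\sum_i\nabla F^i(\bar x)^T\bar y^i=c$, to be delicate. My first attempt would be to use that $\bar x$ minimizes the convex function $\Phi$ (a consequence of the equalities) and to differentiate there.
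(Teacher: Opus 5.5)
Your proof of the equivalence is correct, and it is more elementary than the paper's. The paper gets $\Delta_i\ge 0$ (its $\alpha_i$) from Corollary~\ref{Ofenchel}, which chains three facts:
\begin{itemize}
\item Fenchel's inequality for $g_i\circ F^i$;
\item the composite conjugate bound \eqref{connectiontofenchelfirstmainineq} imported in Theorem~\ref{simplecomposite};
\item the Legendre identity \eqref{differentiableobjlemmaequiv:a}.
\end{itemize}
You instead prove directly that $h_i:=-\bar y^i\cdot F^i$ is convex whenever $-\bar y^i\in\operatorname{dom}(g_i^*)$, by partially minimizing the jointly convex $g_i(F^i(x)+u)+\bar y^i\cdot u$. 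This is the last assertion of Theorem~\ref{simplecomposite}. You then combine Fenchel--Young for $g_i$ with the gradient inequality for $h_i$. The argument is self-contained, and it exhibits $\Delta_i$ as a Fenchel--Young gap plus $\psi_i(\bar x)$, where $\psi_i(z):=h_i(z)-h_i(\bar w^i)-\nabla h_i(\bar w^i)\cdot(z-\bar w^i)\ge 0$. A cosmetic point: finiteness of the partial minimum comes from $-\bar y^i\in\operatorname{dom}(g_i^*)$ and properness of $g_i$, not from \eqref{assumptioneq:gofproper}.

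The gap is in the last claim. You show \eqref{optimaliticonditioneq} holds with $w^i=\bar x$, but you never prove that $(\bar y,(\bar x)_{i\in I})$ still satisfies \eqref{stationarity}. Without that, the equivalence cannot be applied to the new pair, which need not even be dual feasible. Your two partial observations do not close it:
\begin{itemize}
\item Affineness of $h_i$ on $[\bar w^i,\bar x]$ only matches directional derivatives along $\bar x-\bar w^i$.
\item $\Phi$ does not have zero gradient at the points $\bar w^i$.
\end{itemize}

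The missing idea is this. $\Delta_i=0$ gives $\psi_i(\bar x)=0$, so $\bar x$ is a global minimizer of the nonnegative, differentiable function $\psi_i$. Hence $\nabla\psi_i(\bar x)=0$, that is, $\nabla F^i(\bar x)^T\bar y^i=\nabla F^i(\bar w^i)^T\bar y^i$ for each $i\in I$. Stationarity is then preserved term by term. This is exactly what the paper extracts inside the proof of Corollary~\ref{Ofenchel} via \eqref{differentiableobjlemmaequiv:b}.

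Your $\Phi$ route also works once you justify the minimization claim. Since $\psi_i(\bar x)=0$, you have $h_i(x)-h_i(\bar x)=\psi_i(x)+\nabla h_i(\bar w^i)\cdot(x-\bar x)$. Summing over $i$ and using \eqref{stationarity} gives $\Phi(x)-\Phi(\bar x)=\sum_{i\in I}\psi_i(x)\ge 0$, so $\nabla\Phi(\bar x)=0$. The module-wise argument is shorter, though, and yields the stronger term-by-term identity.
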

\pointtoproof{optimalityconditions:proof}

Three observations are important for Theorem~\ref{optimalityconditions}. The first one is that Theorem~\ref{optimalityconditions} is not a strong duality theorem because it gives optimality conditions for a primal-dual pair and not for a primal solution. Ensuring a primal optimal solution can be completed to a primal-dual pair does require additional qualification conditions, which is beyond the scope of this paper\footnote{Here we focus on recovering  solutions  returned by the solver and not on conditions that enable the solver to return such  solutions.}. 

The second observation concerns values $\bar{w}^i$. The last statement on Theorem~\ref{optimalityconditions} says we may take $\bar{w}^i=\bar{x}$ for all $i\in I$ for any optimal solution. Hence, we can safely replace all ${w}^i$ variables in dual problem \eqref{dualproblem} with a single set of variables $w\in \mathbb{R}^J$. As illustrated in Sections~\ref{quadraticconstsec} and \ref{lagrangiansection}, adding this restriction of a single set  of dual variables $w$ for all constraints results in a dual form closer to Lagrangian duality. We keep the module-wise copies for two reasons. First, it adds additional flexibility to the dual problem (e.g. if we happen to have a good dual bound from a solution with non-equal $\bar{w}^i$ there is no need to find the common $\bar{w}$). Second it keeps the module-wise  separation of dual variables common to conic and Fenchel duality where variables for each $i\in I$ are only linked by a stationarity equation (e.g. see Sections~\ref{conic_constraints} and 
\ref{connectiontofenchelsection}).

The third observation is that condition \eqref{optimaliticonditioneq} always imply feasibility of the primal solution $\bar{x}$ and the module-wise dual solution $(\bar{y}^i,\bar{w}^i)$. When these feasibilities depend only on indicator functions we can remove them from \eqref{optimaliticonditioneq} to obtain the following complementary slackness conditions.

\begin{definition}[Complementary Slackness]
We say $\bar{x}\in \mathbb{R}^J$ and $(\bar{y},\bar{w})\in \mathbb{R}^{M}\times \mathbb{R}^{I\times J}$ (feasible or infeasible) satisfy \emph{complementary slackness} for $i\in I$ if and only if
\begin{equation}
g_i^0\left(F^i\left(\bar{x}\right)\right)+\mathcal{O}_{F^i,g_i}^0(-\bar{y}^i, \bar{w}^i)=-\left(\bar{y}^i\right)^T\nabla F^i\left(\bar{w}^i\right) \bar{x}.\label{cscondition}
\end{equation}
\end{definition}

As illustrated in Sections~\ref{secificssection} and \ref{lagrangiansection}, \eqref{cscondition} reduces to familiar conditions for specific classes of functions. Furthermore, we recover the following familiar  optimality conditions in general.
\begin{restatable}{corollary}{csoptimalitycorollary}\label{csoptimalitycorollary}
A pair $(\bar{x},(\bar{y},\bar{w}))\in \mathbb{R}^J\times\left( \mathbb{R}^{M}\times \mathbb{R}^{I\times J}\right)$ is optimal if and only if
\begin{enumerate}
    \item $\bar{x}$ is primal feasible, 
   \item $(\bar{y}^i,\bar{w}^i)$ is module-wise dual feasible for each $i\in I$, 
   \item $(\bar{x},(\bar{y}^i,\bar{w}^i))$ satisfy complementary slackness condition \eqref{cscondition} for all $i\in I$,
   \item $(\bar{y},\bar{w})$ satisfies stationarity condition \eqref{stationarity}.
\end{enumerate}
\end{restatable}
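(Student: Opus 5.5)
The plan is to reduce the corollary to Theorem~\ref{optimalityconditions}. Since optimality of a pair is characterized there under the standing hypothesis that $(\bar{y},\bar{w})$ satisfies stationarity \eqref{stationarity}, I take ``optimal'' to mean optimal in the sense of that theorem. This notion presupposes stationarity (dual feasibility includes \eqref{stationarity}), and it implies equality of the full objectives $c\cdot\bar{x}+\sum_i g_i(F^i(\bar{x}))=-\sum_i\mathcal{O}_{F^i,g_i}(-\bar{y}^i,\bar{w}^i)$. Condition 4 is therefore common to both directions. Under it, Theorem~\ref{optimalityconditions} says optimality is equivalent to \eqref{optimaliticonditioneq} holding for every $i\in I$. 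It thus suffices to prove the module-wise statement: for fixed $i$, \eqref{optimaliticonditioneq} holds if and only if $\bar{x}$ satisfies $g_i(F^i(\bar{x}))<+\infty$, $(\bar{y}^i,\bar{w}^i)$ is module-wise feasible, and \eqref{cscondition} holds.

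For the forward direction of the module-wise claim, I note that the right-hand side of \eqref{optimaliticonditioneq} is a finite real number. The term $F^i_\nabla(-\bar{y}^i,\bar{w}^i)$ is always finite, because $F^i$ is $C^1$ by \eqref{assumptioneq:fdiff}. Next, $g_i(F^i(\bar{x}))>-\infty$ and $g_i^*(-\bar{y}^i)>-\infty$, since $g_i$ is proper by \eqref{assumptioneq:gproper} and hence so is its conjugate. A sum of such terms can equal a finite number only if $g_i(F^i(\bar{x}))<+\infty$ and $g_i^*(-\bar{y}^i)<+\infty$. This gives module-wise primal feasibility of $\bar{x}$ and module-wise dual feasibility, and doing it for every $i$ gives conditions 1 and 2. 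Then I pass from \eqref{optimaliticonditioneq} to \eqref{cscondition} by showing that, at feasible points, $g_i(F^i(\bar{x}))=g_i^0(F^i(\bar{x}))$ and $g_i^*(-\bar{y}^i)=(g_i^*)^0(-\bar{y}^i)$. This holds by the definition of $h^0$. If $h$ is an indicator, then $h^0=0$, and $h$ also equals $0$ at any point of its domain. Otherwise $h^0=h$. With both substitutions, \eqref{optimaliticonditioneq} becomes exactly \eqref{cscondition}.

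For the converse, conditions 1 and 2 give finiteness of $g_i(F^i(\bar{x}))$ and $g_i^*(-\bar{y}^i)$. The same identity $h=h^0$ on $\operatorname{dom}(h)$ then turns \eqref{cscondition} back into \eqref{optimaliticonditioneq} for each $i$. Together with condition 4, Theorem~\ref{optimalityconditions} yields optimality.

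The only delicate point, and the one I expect to need care, is the bookkeeping with extended reals in the forward direction. I must exclude $-\infty$ values, which is where properness of $g_i$ and $g_i^*$ (the latter because $g_i$ is proper, closed and convex) is used. I must also handle the case distinction in the definition of $h^0$, including checking that $(g_i^*)^0$ is taken according to whether $g_i^*$ itself is an indicator. Everything else is direct substitution.
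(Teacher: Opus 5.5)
Your proposal is correct and follows the paper's route. Like the paper, you reduce to Theorem~\ref{optimalityconditions} using that $g_i^0=g_i$ and $(g_i^*)^0=g_i^*$ on their domains, so \eqref{cscondition} and \eqref{optimaliticonditioneq} coincide at feasible points; your extended-real finiteness argument, showing that \eqref{optimaliticonditioneq} forces $g_i(F^i(\bar{x}))<+\infty$ and $g_i^*(-\bar{y}^i)<+\infty$, supplies the forward-direction step that the paper only asserts in its third observation after Theorem~\ref{optimalityconditions}.
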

\pointtoproof{csoptimalitycorollary:proof}

Finally, complementary slackness condition \eqref{cscondition} and stationarity condition \eqref{stationarity} imply equality of the indicator-ignoring objective values \eqref{primalobjmin} and \eqref{dualobjectivemin} even for infeasible solutions. In particular, through this result we can recover the familiar property of equal objectives for complementary solutions in LP as described in Section~\ref{LP:properties:section}. 

\begin{restatable}{lemma}{equalobjlemma}\label{equalobjlemma}
Let $\bar{x}\in \mathbb{R}^J$ and  $(\bar{y},\bar{w})\in \mathbb{R}^{M}\times \mathbb{R}^{I\times J}$ (feasible or infeasible) be such that $(\bar{y},\bar{w})$ satisfies   \eqref{stationarity} and $(\bar{x},(\bar{y},\bar{w}))$ satisfy  \eqref{cscondition} for all $i\in I$. Then, $\operatorname{obj_P}(\bar{x})=\operatorname{obj_D}(\bar{y},\bar{w})$.
\end{restatable}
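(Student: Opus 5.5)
The plan is a direct summation argument, mirroring the LP proof that complementary slackness gives equal objectives. First, sum the complementary slackness conditions \eqref{cscondition} over all $i\in I$:
\[
\sum_{i\in I} g_i^0\left(F^i\left(\bar{x}\right)\right)+\sum_{i\in I}\mathcal{O}_{F^i,g_i}^0(-\bar{y}^i, \bar{w}^i)=-\sum_{i\in I}\left(\bar{y}^i\right)^T\nabla F^i\left(\bar{w}^i\right) \bar{x}.
\]
Next, rewrite the right-hand side as $-\bigl(\sum_{i\in I}\nabla F^i(\bar{w}^i)^T\bar{y}^i\bigr)\cdot\bar{x}$. By stationarity \eqref{stationarity} this equals $-c\cdot\bar{x}$.

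Moving $c\cdot \bar{x}$ and the dual terms across then gives
\[
c\cdot\bar{x}+\sum_{i\in I} g_i^0(F^i(\bar{x})) = -\sum_{i\in I}\mathcal{O}_{F^i,g_i}^0(-\bar{y}^i,\bar{w}^i).
\]
By \eqref{primalobjmin} and \eqref{dualobjectivemin}, this is exactly $\operatorname{obj_P}(\bar{x})=\operatorname{obj_D}(\bar{y},\bar{w})$. No feasibility is needed anywhere: feasibility enters only through the finiteness of the non-indicator parts, and that is handled as follows.

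The main obstacle is the handling of $+\infty$ values, since the rearrangement is only legitimate for finite quantities. I would argue as follows. By definition, $g_i^0$ and $(g_i^*)^0$ are either identically zero or equal to $g_i$ and $g_i^*$ respectively. Both are proper, because $g_i$ is proper, closed and convex by \eqref{assumptioneq:gproper}, and hence so is $g_i^*$. So each of these terms lies in $\mathbb{R}\cup\{+\infty\}$ and never takes the value $-\infty$. Moreover, $F^i_\nabla$ is finite everywhere, being a polynomial expression in finite quantities.

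The right-hand side of \eqref{cscondition} is a real number. Hence the left-hand side of each condition is finite, and since no term can be $-\infty$, each individual term $g_i^0(F^i(\bar{x}))$ and $(g_i^*)^0(-\bar{y}^i)$ is finite. Therefore every summand above is a real number, and the algebraic manipulations (summing, regrouping, and moving terms across the equation) are valid. This establishes the lemma.
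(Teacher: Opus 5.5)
Your proposal is correct and takes the same approach as the paper: sum \eqref{cscondition} over $i\in I$ and use \eqref{stationarity} to replace $\sum_{i\in I}\nabla F^i(\bar{w}^i)^T\bar{y}^i$ by $c$. Your extra argument that every term is finite, which makes the rearrangement legitimate, is a detail the paper leaves implicit, and it is sound.
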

\pointtoproof{equalobjlemma:proof}

\section{Rays and infeasibility certificates}\label{rayssection}
We use the following definition to  succinctly describe rays and infeasibility certificates. 
\begin{definition}
For any finite set $L$ and proper closed convex function $h:\mathbb{R}^{L} \to {\mathbb{R}\cup\{+\infty\}}$ the  \emph{asymptotic} or \emph{horizon} function of $h$ is $h_\infty:\mathbb{R}^L\to {\mathbb{R}\cup\{+\infty\}}$  given by 
    \[h_\infty\left(\bar{d}\right):=\sup_{t>0} \frac{h\left(\bar{z}+t \bar{d}\right)-h\left(\bar{z}\right)}{t},\]
where $\bar{z}\in \mathbb{R}^L$ is an arbitrary point such that $h(\bar{z})<+\infty$.
\end{definition}

The notions of primal/dual rays for \eqref{primalproblem}/\eqref{dualproblem} can be defined as follows.
\begin{definition}
A \emph{primal ray} for \eqref{primalproblem} is $\hat{x}\in \mathbb{R}^J$ such that \begin{equation}\label{primalraydef}c\cdot \hat{x}+\sum_{i\in I} (g_i\circ F^i)_\infty(\hat{x})<0.\end{equation}

A \emph{dual ray} for \eqref{dualproblem} is $(\hat{y},\hat{w})\in \mathbb{R}^{M}\times \mathbb{R}^{I\times J}$ such that
\begin{subequations}\label{dualraydef}
\begin{align}
\sum_{i\in I} \nabla F^i(\hat{w}^i)^T \hat{y}^i&= 0\label{dualrayeq}\\ 
    -\sum_{i \in I} F^i_\nabla(-\hat{y}^i, \hat{w}^i) +(g_i^*)_\infty\left(-\hat{y}^i\right)&> 0.\label{dualrayineq}
\end{align}
\end{subequations}
\end{definition}

As expected, together with the existence of a feasible solution, rays imply unboundedness of the associated problem (i.e. the first property of rays described for LP in Section~\ref{LP:properties:section}).

\begin{restatable}[Unboundedness Conditions]{theorem}{unboundedtheo}\label{unboundedtheo}
If the primal problem has a feasible solution $\bar{x}\in\mathbb{R}^J$ and a primal ray $\hat{x}\in\mathbb{R}^J$, then the primal problem is unbounded.

If the dual problem has a feasible solution $(\bar{y},\bar{w})\in \mathbb{R}^{M}\times \mathbb{R}^{I\times J}$ and a dual ray $(\hat{y},\hat{w})\in \mathbb{R}^{M}\times \mathbb{R}^{I\times J}$ such that
\begin{equation}\label{rayequalnabla}
\nabla F^i\left(\bar{w}^i\right)^T \hat{y}^i=\nabla F^i\left(\hat{w}^i\right)^T \hat{y}^i \quad \forall i \in I\
\end{equation}
then the dual problem is unbounded.
\end{restatable}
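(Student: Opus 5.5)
For the primal statement I would move along the ray from the feasible point. Set $h(x):=c\cdot x+\sum_{i\in I} g_i(F^i(x))$. Each $g_i\circ F^i$ is proper: it is finite at $\bar{x}$, and it is convex by \eqref{assumptioneq:goffullyconvex} (take $u=0$). It is also closed, as the composition of a closed $g_i$ with a continuous $F^i$. So the horizon functions $(g_i\circ F^i)_\infty$ are well defined, and we may evaluate them at the base point $\bar{z}=\bar{x}$. The key fact is that, for a convex function, the difference quotient $t\mapsto \bigl(g_i(F^i(\bar{x}+t\hat{x}))-g_i(F^i(\bar{x}))\bigr)/t$ is nondecreasing in $t$. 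Its supremum is therefore an upper bound for every $t>0$, which gives
\[
g_i(F^i(\bar{x}+t\hat{x}))\le g_i(F^i(\bar{x}))+t\,(g_i\circ F^i)_\infty(\hat{x}).
\]
Condition \eqref{primalraydef} forces each $(g_i\circ F^i)_\infty(\hat{x})<+\infty$. Summing over $i$ and adding $c\cdot(\bar{x}+t\hat{x})$ yields $h(\bar{x}+t\hat{x})\le h(\bar{x})+t\bigl(c\cdot\hat{x}+\sum_i (g_i\circ F^i)_\infty(\hat{x})\bigr)$. The bracket is negative, so the right-hand side tends to $-\infty$. Every $\bar{x}+t\hat{x}$ is feasible, so $\operatorname{opt}_P=-\infty$.

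For the dual statement I would consider $(\bar{y}+t\hat{y},\bar{w})$ for $t>0$, keeping $\bar{w}$ fixed rather than moving it. The point of condition \eqref{rayequalnabla} is that stationarity is preserved:
\[
\sum_i\nabla F^i(\bar{w}^i)^T(\bar{y}^i+t\hat{y}^i)=c+t\sum_i\nabla F^i(\hat{w}^i)^T\hat{y}^i=c,
\]
using \eqref{stationarity} for $(\bar{y},\bar{w})$ and \eqref{dualrayeq}. For the objective, $F^i_\nabla(\cdot,w)$ is linear in its first argument, so
\[
F^i_\nabla(-\bar{y}^i-t\hat{y}^i,\bar{w}^i)=F^i_\nabla(-\bar{y}^i,\bar{w}^i)+t\,F^i_\nabla(-\hat{y}^i,\bar{w}^i).
\]
The conjugate $g_i^*$ is proper, closed and convex, since $g_i$ is proper closed convex. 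It is finite at $-\bar{y}^i$ by module-wise feasibility. The same monotone difference-quotient argument then gives $g_i^*(-\bar{y}^i-t\hat{y}^i)\le g_i^*(-\bar{y}^i)+t\,(g_i^*)_\infty(-\hat{y}^i)$, so the perturbed point stays module-wise feasible.

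Combining these, the dual objective at $(\bar{y}+t\hat{y},\bar{w})$ is at least the dual objective at $(\bar{y},\bar{w})$ plus
\[
t\Bigl(-\sum_i F^i_\nabla(-\hat{y}^i,\bar{w}^i)+(g_i^*)_\infty(-\hat{y}^i)\Bigr).
\]
This bracket is not literally the expression in \eqref{dualrayineq}, because \eqref{dualrayineq} evaluates $F^i_\nabla$ at $\hat{w}^i$ rather than $\bar{w}^i$. Reconciling the two is the step I expect to be the main obstacle.

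Expand $F^i_\nabla(-\hat{y}^i,w)=-\bigl(\nabla F^i(w)^T\hat{y}^i\bigr)\cdot w+F^i(w)\cdot\hat{y}^i$. I first note a sign issue. Under the literal grouping in \eqref{dualrayineq}, $-\sum_i F^i_\nabla(-\hat{y}^i,\hat{w}^i)+(g_i^*)_\infty(-\hat{y}^i)$, the horizon term enters with a $+$ sign. Pushing the computation above through, however, the increment that is actually needed is $-\sum_i\bigl[F^i_\nabla(-\hat{y}^i,\bar{w}^i)+(g_i^*)_\infty(-\hat{y}^i)\bigr]$, with the horizon term carrying a $-$ sign. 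So I would read \eqref{dualrayineq} as $-\sum_i[\cdots]>0$, matching the grouping used in \eqref{dualproblem:obj}.

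It then remains to argue that $F^i_\nabla(-\hat{y}^i,\cdot)$ takes the same value at $\bar{w}^i$ and at $\hat{w}^i$. Using \eqref{rayequalnabla}, the Jacobian terms agree. The remaining terms are $F^i(w)\cdot\hat{y}^i$ and $\bigl(\nabla F^i(w)^T\hat{y}^i\bigr)\cdot w$. I would first try to show that their difference is invariant when $F^i$ is affine or quadratic, the cases of Table~\ref{initialfunctions}. For affine $F^i$ this is immediate, since $F^i_\nabla(-\hat{y}^i,w)=-b\cdot\hat{y}^i$ does not depend on $w$. For general $F^i$, I would try to use convexity of $\hat{y}^i\cdot F^i$, which follows from \eqref{assumptioneq:goffullyconvex} together with $-\hat{y}^i\in\operatorname{dom}(g_i^*)_\infty$. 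Writing $\phi(w):=\hat{y}^i\cdot F^i(w)$, one has $-F^i_\nabla(-\hat{y}^i,w)=\nabla\phi(w)\cdot w-\phi(w)=\phi^*(\nabla\phi(w))$. This is the Fenchel–Young equality, and it shows the quantity depends only on the gradient $\nabla\phi(w)=\nabla F^i(w)^T\hat{y}^i$. By \eqref{rayequalnabla} that gradient is the same at $\bar{w}^i$ and $\hat{w}^i$, so the two values coincide, and the increment is positive. The dual objective therefore tends to $+\infty$ along feasible points, and the dual is unbounded.
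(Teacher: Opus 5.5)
Your argument is essentially the paper's. It moves along $\bar{x}+t\hat{x}$ and along $(\bar{y}+t\hat{y},\bar{w})$ using $h(\bar{z}+td)\le h(\bar{z})+t\,h_\infty(d)$. You apply this summand by summand, which spares you the sum rule for horizon functions that the paper cites. Like the paper, you keep $\bar{w}$ fixed so that \eqref{rayequalnabla} preserves stationarity. You then reconcile $F^i_\nabla(-\hat{y}^i,\bar{w}^i)$ with $F^i_\nabla(-\hat{y}^i,\hat{w}^i)$ via \eqref{OfenchelOeq}, i.e.\ the fact that $\nabla\psi(w)\cdot w-\psi(w)$ depends only on $\nabla\psi(w)$ for differentiable convex $\psi$. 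Your reading of \eqref{dualrayineq} as $-\sum_i[\cdots]>0$ is also the intended one.

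There is, however, a sign slip in that last step, and it needs fixing:
\begin{itemize}
\item The ray inequality forces $(g_i^*)_\infty(-\hat{y}^i)<+\infty$. You also need this fact, without stating it, for module-wise feasibility along the ray.
\item Consequently $-\bar{y}^i-t\hat{y}^i\in\operatorname{dom}(g_i^*)$ for all $t>0$. Each function $\frac{1}{t}(-\bar{y}^i-t\hat{y}^i)\cdot F^i$ is then convex by the last claim of Theorem~\ref{simplecomposite}.
\item Letting $t\to\infty$ shows that $\psi:=(-\hat{y}^i)\cdot F^i$ is convex.
\item Your $\phi=\hat{y}^i\cdot F^i$ is therefore in general concave, e.g.\ for a convex quadratic constraint, where $\hat{y}_{i_0}\le 0$. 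So the identity $\nabla\phi(w)\cdot w-\phi(w)=\phi^*(\nabla\phi(w))$ is false as written.
\end{itemize}

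The repair is to write $F^i_\nabla(-\hat{y}^i,w)=\nabla\psi(w)\cdot w-\psi(w)=\psi^*(\nabla\psi(w))$ with $\nabla\psi(w)=-\nabla F^i(w)^T\hat{y}^i$. The invariance you want then follows from \eqref{rayequalnabla}, and your conclusion stands. This convexity check is exactly what the paper's use of \eqref{OfenchelOeq} tacitly requires, since that equation is stated there for arbitrary $y^i$.
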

\pointtoproof{unboundedtheo:proof}

Because Theorem~\ref{weakduality}, existence of a primal feasible solution and ray implies infeasibility of the dual problem and vice-versa. However, existence of ray for one problem implies infeasibility of the other even in the absence of the feasible solution (i.e. the second role of LP rays  from Section~\ref{LP:properties:section}). 

\begin{restatable}[Infeasibility Certificates]{theorem}{infeasibilitytheo}\label{infeasibilitytheo}
If a primal ray exists, then the dual problem is infeasible, and if a dual ray exists, then the primal problem is infeasible.
\end{restatable}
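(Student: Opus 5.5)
The plan is to prove both statements by contradiction. Each time I would use the module-wise inequality behind Theorem~\ref{weakduality}, pass to the asymptotic regime along the ray, and then sum over $i\in I$ so that the joint equation (stationarity \eqref{stationarity} or \eqref{dualrayeq}) cancels the linear terms. The basic tool I will extract from the proof of Theorem~\ref{weakduality} is the following module-wise inequality. For every $i\in I$, $x\in\mathbb{R}^J$, $y^i\in\mathbb{R}^{M_i}$ and $w^i\in\mathbb{R}^J$,
\begin{equation*}
g_i\left(F^i(x)\right)+\mathcal{O}_{F^i,g_i}(-y^i,w^i)\geq -\left(y^i\right)^T\nabla F^i\left(w^i\right)x .
\end{equation*}
This follows from the Fenchel--Young inequality $g_i(z)+g_i^*(-y^i)\geq -y^i\cdot z$ combined with the linearization bound $-y^i\cdot F^i(x)\geq -y^i\cdot\left(F^i(w^i)+\nabla F^i(w^i)(x-w^i)\right)$ whenever $g_i^*(-y^i)<+\infty$, which is where \eqref{assumptioneq:goffullyconvex} is used. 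Summing it over $i$ under \eqref{stationarity} is exactly what gives \eqref{weakdualityone}. When $g_i^*(-y^i)=+\infty$ the inequality holds trivially.

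\emph{Primal ray $\Rightarrow$ dual infeasible.} Suppose $\hat{x}$ is a primal ray and $(\bar y,\bar w)$ is dual feasible. Fix $i$ and set $h_i(x):=g_i(F^i(x))+(\bar y^i)^T\nabla F^i(\bar w^i)x$. By \eqref{assumptioneq:goffullyconvex} with $u=0$, \eqref{assumptioneq:gproper}, continuity of $F^i$ and \eqref{assumptioneq:gofproper}, the function $g_i\circ F^i$ is proper, closed and convex. Hence $h_i$ is as well, and its horizon function is well defined; it equals $(g_i\circ F^i)_\infty+(\bar y^i)^T\nabla F^i(\bar w^i)(\cdot)$. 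The module-wise inequality shows that $h_i\geq -\mathcal{O}_{F^i,g_i}(-\bar y^i,\bar w^i)$, and the right-hand side is finite by module-wise dual feasibility. Evaluating the difference quotient at $\bar{x}^i+t\hat x$ and letting $t\to\infty$ then gives
\begin{equation*}
(g_i\circ F^i)_\infty(\hat x)+(\bar y^i)^T\nabla F^i(\bar w^i)\hat x\geq 0 .
\end{equation*}
Summing over $i$ and using \eqref{stationarity} yields $c\cdot\hat x+\sum_i (g_i\circ F^i)_\infty(\hat x)\geq 0$, which contradicts \eqref{primalraydef}.

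\emph{Dual ray $\Rightarrow$ primal infeasible.} Suppose $(\hat y,\hat w)$ is a dual ray and $\bar x$ is primal feasible. I read \eqref{dualrayineq} as $\sum_i\left[F^i_\nabla(-\hat y^i,\hat w^i)+(g_i^*)_\infty(-\hat y^i)\right]<0$. The goal is to show, for each $i$,
\begin{equation*}
F^i_\nabla(-\hat y^i,\hat w^i)+(g_i^*)_\infty(-\hat y^i)\geq -\left(\hat y^i\right)^T\nabla F^i(\hat w^i)\bar x .
\end{equation*}
Summing this with \eqref{dualrayeq} gives a nonnegative total, which contradicts \eqref{dualrayineq}. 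To get the per-$i$ bound, pick $\bar v^i\in\operatorname{dom}(g_i^*)$, which is nonempty because $g_i$ is proper and closed. Apply the module-wise inequality at $x=\bar x$ with $y^i=-\bar v^i+t\hat y^i$ and $w^i=\hat w^i$, divide by $t>0$, and let $t\to\infty$. Three facts make the limit work:
\begin{itemize}
\item $g_i(F^i(\bar x))$ is finite, so $g_i(F^i(\bar x))/t\to0$;
\item $F^i_\nabla(\cdot,\hat w^i)$ is linear in its first argument, so $F^i_\nabla(\bar v^i-t\hat y^i,\hat w^i)/t\to F^i_\nabla(-\hat y^i,\hat w^i)$;
\item for a proper closed convex function, $\frac{1}{t}\left(g_i^*(\bar v^i-t\hat y^i)-g_i^*(\bar v^i)\right)\to(g_i^*)_\infty(-\hat y^i)$.
\end{itemize}
The right-hand side divided by $t$ tends to $-(\hat y^i)^T\nabla F^i(\hat w^i)\bar x$, which gives the desired bound.

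The main obstacle is making sure the module-wise inequality genuinely holds for arbitrary $w^i$, not only at stationary points. Its linearization step needs \eqref{assumptioneq:goffullyconvex} to guarantee that $x\mapsto -y^i\cdot F^i(x)$ is convex whenever $g_i^*(-y^i)<+\infty$. The natural route is to write $g_i^*(-y^i)$ as the conjugate of $(x,u)\mapsto g_i(F^i(x)+u)$ evaluated at the dual vector $(0,-y^i)$, and to use joint convexity. I would check that the proof of Theorem~\ref{weakduality} provides exactly this module-wise form, and if not, re-derive it there. A secondary point is verifying that the difference-quotient limits can be taken one module at a time, which follows from monotonicity of convex difference quotients.
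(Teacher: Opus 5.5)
Your proof is correct. It rests on the same two per-module inequalities that drive the paper's argument, but you reach them by a more elementary route.

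\textbf{The paper's route.} It first proves the uniform bounds \eqref{Ofenchelhorizon} and \eqref{Ofencheldomain} of Corollary~\ref{Ofenchel}. These bound $(g_i\circ F^i)_\infty$ and $\sigma_{\operatorname{dom}(g_i\circ F^i)}$ by support functions of $\operatorname{dom}(g_i^*)$ and by $F^i_\nabla+(g_i^*)_\infty$. They are proved using the identity $f_\infty=\sigma_{\operatorname{dom}(f^*)}$, the domain inclusion from Corollary~\ref{connectiontofenchelcoro}, Lemma~\ref{horizonineqlemma}, and horizon calculus. The paper then concludes in the primal case by a separation argument ($S(w)\cap C=\emptyset$ for every $w$). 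In the dual case it uses the intersection inequality \cite[Theorem 11.24(e)]{rockafellar2009variational}.

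\textbf{Your route.} You need only pointwise instances of those bounds. In the primal case, the instance is at the $-\bar y^i\in\operatorname{dom}(g_i^*)$ supplied by an assumed dual feasible point. In the dual case, it is at the $\bar x\in\bigcap_i\operatorname{dom}(g_i\circ F^i)$ supplied by an assumed primal feasible point. You get both directly from the weak-duality inequality \eqref{Ofencheleq}. The primal case uses the fact that a convex function bounded below has nonnegative horizon. The dual case scales $-\bar v^i+t\hat y^i$ along the ray and uses monotone difference quotients. In effect this is weak duality pushed to infinity along the ray, and it avoids horizon/conjugate duality altogether. In exchange, the paper's route gives reusable uniform bounds valid over all $w$ and over the whole domain, and it makes the separating-hyperplane picture explicit.

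\textbf{On your ``main obstacle''.} The module-wise inequality you need is exactly \eqref{Ofencheleq}, which the paper proves for arbitrary $\bar w^i$, so you can cite it directly. If you do re-derive the convexity of $x\mapsto -y^i\cdot F^i(x)$ for $-y^i\in\operatorname{dom}(g_i^*)$, your proposed tool needs a small correction. The full conjugate of $(x,u)\mapsto g_i(F^i(x)+u)$ at $(0,-y^i)$ is $g_i^*(-y^i)+\sup_x y^i\cdot F^i(x)$, not $g_i^*(-y^i)$. What works is the partial conjugate in $u$ with $x$ fixed, $\sup_u\{-y^i\cdot u-g_i(F^i(x)+u)\}=g_i^*(-y^i)+y^i\cdot F^i(x)$. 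This is concave in $x$, being a supremum over $u$ of a jointly concave function, and it is finite when $g_i^*(-y^i)<+\infty$. That is precisely the last claim of Theorem~\ref{simplecomposite}.
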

\pointtoproof{infeasibilitytheo:proof}

\section{Specific constraints}\label{secificssection}

 \subsection{Conic constraints (including one-sided linear inequalities)}\label{conic_constraints}
 Currently MathOpt only supports conic constraints for the non-negative orthant (i.e. one-sided linear inequalities) and   the second order cone (SOC) or conic quadratic cone \citep{ben2001lectures,aps2020mosek}. However, our results are applicable for any closed convex cone.
 
\begin{restatable}{proposition}{conicspecific}\label{conicspecific}
 Let $K_i\subseteq \mathbb{R}^{M_i}$ be a closed convex cone, 
 $K^\circ_i:=\{y\in \mathbb{R}^{M_i}\,:\,y\cdot x\leq 0\quad\forall x\in K_i\}$ be the polar cone of $K_i$, $F^i:\mathbb{R}^J\to\mathbb{R}^{M_i}$ be such that $F^i(x)=A^ix-b^i$ for $A^i\in \mathbb{R}^{M_i\times J}$ and $b^i\in \mathbb{R}^{M_i}$ and $g_i:\mathbb{R}^{M_i}\to\mathbb{R}\cup\{+\infty\}$ be such that  $g_i=\delta_{K_i}$. If $g_i\circ F^i$ satisfies \eqref{assumptioneq:gofproper}, then $g_i^0\equiv 0$, $g_i^*=(g_i^*)^\infty=\delta_{K^\circ_i}$, $(g_i^*)^0\equiv 0$, $\nabla F^i\left(w^i\right)=A^i$,  $  F^i_\nabla\left(y^i, w^i\right)=b^i\cdot y^i$, and $\left(g_i\circ F^i\right)_\infty(\hat{x})=\delta_{K_i}\left(A^i\hat{x}\right)$.
 \end{restatable}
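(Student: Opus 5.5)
The plan is to verify each identity directly from the definitions in Section~\ref{notation:section} and Section~\ref{rayssection}, using only that $K_i$ is a nonempty closed convex cone. Its polar $K_i^\circ$ is then also a nonempty closed convex cone. I read the symbol $(g_i^*)^\infty$ in the statement as the horizon function $(g_i^*)_\infty$.

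\textbf{Routine identities.} Since $\operatorname{rge}(\delta_{K_i})\subseteq\{0,+\infty\}$, we get $g_i^0\equiv 0$ by definition. For the conjugate, $g_i^*(v)=\sup_{z\in K_i} v\cdot z=\sigma_{K_i}(v)$. If $v\in K_i^\circ$, every term is $\le 0$ and $z=0\in K_i$ gives $0$, so $g_i^*(v)=0$. If $v\notin K_i^\circ$, there is $z\in K_i$ with $v\cdot z>0$; scaling $tz\in K_i$ with $t\to\infty$ gives $+\infty$. Hence $g_i^*=\delta_{K_i^\circ}$, and therefore $(g_i^*)^0\equiv 0$. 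Since $F^i$ is affine, $\nabla F^i(w^i)=A^i$ for every $w^i$. Then
\[
F^i_\nabla(y^i,w^i)=\left(A^iw^i-(A^iw^i-b^i)\right)\cdot y^i=b^i\cdot y^i .
\]

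\textbf{Horizon functions.} Both remaining claims reduce to one lemma: for a nonempty closed convex cone $C$ and any $\bar z\in C$,
\[
\sup_{t>0}\frac{\delta_C(\bar z+t d)-\delta_C(\bar z)}{t}=\delta_C(d).
\]
If $d\in C$, then $\bar z+td\in C$, because a convex cone is closed under addition and positive scaling, so every quotient is $0$. If $d\notin C$, then $(\bar z+td)/t\to d$ as $t\to\infty$. Since $C$ is closed, $(\bar z+td)/t\notin C$ for all large $t$, and hence $\bar z+td\notin C$ because $C$ is a cone. So the supremum is $+\infty$.

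Applying the lemma with $C=K_i^\circ$ and $\bar z=0$ gives $(g_i^*)_\infty=\delta_{K_i^\circ}=g_i^*$. For $(g_i\circ F^i)_\infty$, I use the point $\bar x^i$ from \eqref{assumptioneq:gofproper} as the base point, so $\bar z:=A^i\bar x^i-b^i\in K_i$. Then
\[
g_i(F^i(\bar x^i+t\hat x))=\delta_{K_i}(\bar z+tA^i\hat x),
\]
and the lemma with $C=K_i$ and $d=A^i\hat x$ yields $(g_i\circ F^i)_\infty(\hat x)=\delta_{K_i}(A^i\hat x)$.

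\textbf{Main obstacle.} The only nontrivial step is the case $d\notin C$ in the lemma. This is where closedness of the cone is essential: the rescaling argument needs $d\notin C$ to force $\bar z+td\notin C$ for large $t$. I will also note that the paper's definition of the horizon function takes the base point to be arbitrary, so it suffices to exhibit one feasible base point. Assumption \eqref{assumptioneq:gofproper} provides exactly such a point for $g_i\circ F^i$, and $0\in K_i^\circ$ provides one for $g_i^*$.
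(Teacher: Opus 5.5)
Your proof is correct and is essentially the paper's argument made self-contained. The paper cites the standard facts that $\delta_{K_i}^*=\sigma_{K_i}=\delta_{K_i^\circ}$ and that the indicator of a closed convex cone is its own horizon function (via the epigraph/recession-cone characterization and the rule for composition with an affine map), while your lemma proves this directly from the difference-quotient definition; your explicit choice of $\bar x^i$ from \eqref{assumptioneq:gofproper} and of $0\in K_i^\circ$ as base points also shows where the hypothesis is used, which the paper leaves implicit.
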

\pointtoproof{conicspecific:proof}

To illustrate the use of Proposition~\ref{conicspecific}, note that primal problem \eqref{primalproblem} with all $F^i$ and $g_i$ as in Proposition~\ref{conicspecific} reduces to 
\begin{align*}
\operatorname{opt}_P:&=
\begin{alignedat}[t]{3}
&\inf_{x\in \mathbb{R}^J}& c\cdot x&\\
&s.t.&&\\
&& A^ix-b^i&\in K_i \quad \forall i\in I
\end{alignedat}
\end{align*}
and the associated dual problem  \eqref{dualproblem} reduces to
\begin{subequations}
\begin{alignat}{3}
\operatorname{opt}_D:=
&\sup_{y\in \mathbb{R}^M}&  \sum_{i \in I} b^i\cdot y^i  & \\\
&s.t.&&\notag\\
&& \sum_{i\in I} (A^i)^{T}y^i&= c,\quad -y^i&\in K_i^\circ\quad \forall i \in I.
\end{alignat}
\label{conicdual}
    \end{subequations}
If we let  $K^*_i:=-K^\circ_i=\{y\in \mathbb{R}^{M_i}\,:\,y\cdot x\geq 0\quad\forall x\in K_i\}$ be the dual cone of $K_i$,
we recognize in \eqref{conicdual} the standard conic dual (i.e. \eqref{conicdual} with $-y^i\in K_i^\circ$ replaced by $y^i\in K_i^*$).

As expected, because $F^i$ is an affine function, variables $w^i$ do not appear in \eqref{conicdual}. 

Primal feasibility becomes the standard conic primal feasibility $A^ix-b^i\in K_i$ for all $i\in I$, and dual feasibility becomes the standard conic dual feasibility $\sum_{i\in I} (A^i)^{T}y^i= c$ and $y^i\in K_i^*$ for all $i\in I$. Complementary slackness is the standard conic complementary slackness 
$\bar{y}^i\cdot (A^i \bar{x} -b^i)=0$  for all  $i \in I$. In addition, primal ray definition \eqref{primalraydef} becomes the standard conic primal ray condition
\begin{equation*}A^i\hat{x}\in K_i\quad\forall i\in I \quad\text{and}\quad c\cdot \hat{x}<0,\end{equation*}
and dual ray definition \eqref{dualraydef} becomes the standard conic dual ray condition
\begin{align*}
    \sum_{i\in I} (A^i)^{T}\hat{y}^i &=0,\quad \hat{y}^i\in K_i^*\quad\forall i \in I \\ \sum_{i \in I} b^i\cdot \hat{y}^i &>0.
\end{align*}
Again, we can see that variables $w^i$ are also not needed to describe dual rays.

 \subsection{Two-sided (or ranged) linear inequalities}\label{twosideddec}

 \begin{restatable}{proposition}{twosidedspecific}\label{twosidedspecific}
 Let $a^i\in \mathbb{R}^J$, $l_i\in \mathbb{R}$ and $u_i\in \mathbb{R}$ be such that $l_i\leq u_i$,
 $F^i:\mathbb{R}^J\to\mathbb{R}$ be such that $F^i(x)=a^i\cdot x$ and $g_i:\mathbb{R}\to\mathbb{R}\cup\{+\infty\}$ be such that $g_i=\delta_{[l_i, u_i]}$. Then $g_i^0\equiv 0$, $g_i^*(y_i)=(g_i^*)^\infty(y_i)=(g_i^*)^0(y_i)=\max\{l_i y_i,u_i y_i\}$, $\nabla F^i\left(w^i\right)=a^i$,  $  F^i_\nabla\equiv 0$, and $\left(g_i\circ F^i\right)_\infty\left(\hat{x}\right)=\delta_{\{0\}}\left(a^i\cdot \hat{x}\right)$.
 \end{restatable}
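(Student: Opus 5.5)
The plan is to verify each claimed identity directly from the definitions in Sections~\ref{primal:sec}--\ref{rayssection}, treating the items in the order they are listed. First, $g_i=\delta_{[l_i,u_i]}$ has range $\{0,+\infty\}$ (the interval is nonempty because $l_i\leq u_i$), so $g_i^0\equiv 0$ by definition. Since $F^i(x)=a^i\cdot x$ is linear, its Jacobian is the constant row $a^i$. Hence $\nabla F^i(w^i)w^i-F^i(w^i)=a^i\cdot w^i-a^i\cdot w^i=0$, and therefore $F^i_\nabla\equiv 0$.

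Next I will compute the conjugate. By definition,
\[
g_i^*(y_i)=\sup_{z\in[l_i,u_i]} y_i z,
\]
which is the support function of $[l_i,u_i]$. A linear function on a compact interval attains its maximum at an endpoint, so the supremum is $\max\{l_iy_i,u_iy_i\}$. This function is finite everywhere and is not an indicator, because its range is not contained in $\{0,+\infty\}$ unless $l_i=u_i=0$. That degenerate case needs a remark: the conjugate is then identically $0$, the definition of $h^0$ gives $0$, and the claimed formula also gives $0$, so the identity still holds. In all cases, then, $(g_i^*)^0=g_i^*$.

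For the horizon function I will use a positive-homogeneity argument. Here $g_i^*$ is positively homogeneous and convex (sublinear), so I take $\bar z=0$, where $g_i^*(0)=0<+\infty$. Then
\[
\frac{g_i^*(t\bar d)-g_i^*(0)}{t}=g_i^*(\bar d)\quad\text{for every } t>0,
\]
so $(g_i^*)_\infty=g_i^*$. The definition of the horizon function says the base point is arbitrary in the domain. I will rely on that, and could also cite the standard independence fact for proper closed convex functions.

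The remaining identity is $(g_i\circ F^i)_\infty(\hat x)=\delta_{\{0\}}(a^i\cdot\hat x)$. Take a feasible $\bar x$, which exists because $a^i\neq 0$ or $0\in[l_i,u_i]$; this is where \eqref{assumptioneq:gofproper} enters. The difference quotient is
\[
\frac{\delta_{[l_i,u_i]}(a^i\cdot\bar x+t\,a^i\cdot\hat x)}{t}.
\]
If $a^i\cdot\hat x=0$ this is $0$ for every $t>0$. If $a^i\cdot\hat x\neq 0$, then for large $t$ the point $a^i\cdot\bar x+t\,a^i\cdot\hat x$ leaves the bounded interval, so the quotient is $+\infty$ and the supremum is $+\infty$.

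The only mildly delicate points are the degenerate cases: $l_i=u_i=0$ when identifying $(g_i^*)^0$, and the use of \eqref{assumptioneq:gofproper} to guarantee that a base point exists for the horizon function. Everything else is routine.
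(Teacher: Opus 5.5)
Your proof is correct and follows essentially the paper's route. The conjugate is the support function of $[l_i,u_i]$, and positive homogeneity gives $(g_i^*)_\infty=g_i^*$; the paper phrases this as the epigraph of a support function being a closed convex cone. Boundedness of the interval gives $(g_i\circ F^i)_\infty(\hat{x})=\delta_{\{0\}}(a^i\cdot\hat{x})$, which the paper obtains from $\delta_{[l_i,u_i]}^\infty=\delta_{\{0\}}$ plus a textbook composition rule. Where you differ is that you check the horizon functions directly from the difference-quotient definition rather than citing those results. You also make explicit two points the paper leaves implicit: the degenerate case $l_i=u_i=0$ for $(g_i^*)^0$, and the use of \eqref{assumptioneq:gofproper} to guarantee a base point when $a^i=0$.
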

\pointtoproof{twosidedspecific:proof}

To illustrate the use of Proposition~\ref{twosidedspecific}, note that primal problem \eqref{primalproblem} with all $F^i$ and $g_i$ as in Proposition~\ref{twosidedspecific} reduces to 
\begin{subequations}   
\begin{alignat}{3}
\operatorname{opt}_P:=
&\inf_{x\in \mathbb{R}^J}& c\cdot x&\\
&s.t.&&\notag\\
&& l_i\leq a^i\cdot x&\leq u_i \quad \forall i \in I  
\label{twosidedprimal}
\end{alignat}
\end{subequations}
and the associated dual problem  \eqref{dualproblem} reduces to
\begin{subequations}
\label{twosideddual}
\begin{alignat}{3}
\operatorname{opt}_D:=
&\sup_{y\in \mathbb{R}^M}& \quad \sum_{i \in I} \min\{l_i y_i,&u_i y_i\}    \\\
&s.t.&&\notag\\
&& \sum_{i\in I} a^iy_i&= c.
\end{alignat}
\end{subequations}
As expected, because $F^i$ is a linear function, variables $w^i$ do not appear in \eqref{twosideddual}. We can also see that the  dual \eqref{standard:ub:dual:lp} for LPs with lower and upper variable bounds is a special case of \eqref{dualproblem}. In particular, \eqref{twosideddual} extends  \eqref{standard:ub:dual:lp} when the equality constraints of \eqref{standard:ub:dual:lp} are absent.

Primal feasibility becomes  $l_i\leq a^i\cdot x\leq u_i$ for all $i\in I$, all module-wise dual solutions $\bar{y}_i$ are feasible, and feasibility of a full dual solution $\bar{y}$ reduces to satisfying stationarity condition $\sum_{i\in I} a^iy_i= c$. Complementary slackness becomes 
\begin{equation}\label{signed:cs}
    \min\{l_i\bar{y}_i,u_i\bar{y}_i\}=\bar{y}_i a^i\cdot \bar{x} \quad \forall i \in I.
\end{equation}

Finally, primal ray definition \eqref{primalraydef} becomes
\begin{equation}a^i\cdot\hat{x}=0\quad\forall i\in I \quad\text{and}\quad c\cdot \hat{x}<0,\end{equation}
and dual ray definition \eqref{dualraydef} becomes
\begin{equation}
     \sum_{i\in I} a^i\hat{y}_i =0 \quad\text{ and }\quad  \sum_{i \in I} \min\{l_i \hat{y}_i,u_i \hat{y}_i\}   >0.
\end{equation}
Again, we can see that variables $w^i$ are also not needed to describe dual rays.

\subsection{Quadratic Objective}\label{quadraticobjsec}

In Table~\ref{initialfunctions} we described (homogeneous) quadratic functions in matrix notation. However, such matrix notation is not completely standardized (e.g.  the associated matrix corresponds to the Hessian of the quadratic function only if the function is scaled by $1/2$). Similarly, we also want contracts that are applicable when a non-matrix notational convention is used. Fortunately these issues can be resolved by using the following straightforward properties of quadratic functions.

\begin{definition}\label{quadraticform:def}
   We say $h:\mathbb{R}^J\to\mathbb{R}$ is a \emph{convex quadratic form} if it is a convex homogeneous polynomial of degree two (i.e. a convex polynomial whose non-zero terms all have degree $2$).
\end{definition}

\begin{lemma}\label{quadraticform:lemma}
    Let $h:\mathbb{R}^J\to\mathbb{R}$ be a convex quadratic form. Then, for any $w,w'\in \mathbb{R}^J$ we have $h(w)=\frac{1}{2}w^T \nabla^2h(w')w$, $\nabla h(w)=\nabla^2h(w')w$ and $\nabla h\left(w\right)\cdot w=2h_i\left(w\right)$. 
    
    In addition, $\nabla h\left(w\right)$ has a simple description for matrix- or non-matrix-based descriptions of $h$:
\begin{itemize}
    \item If $h(w)=w^TQw$ for $Q\in \mathbb{R}^{J\times J}$, then $\nabla h(w)=2 Qw$.
    \item If $h(w)=(1/2)w^TQw$ for $Q\in \mathbb{R}^{J\times J}$, then $\nabla h(w)=Qw$.
    \item If $h(w)=\sum_{(j,j')\in K} q_{j,j'}w_jw_{j'}$ for $K:=\{(i,j')\in J\times J\,:\, i\leq j'\}$ and $q_{j,j'}\in \mathbb{R}$  for all $(j,j')\in K$, then  $(\nabla h(w))_j=q_{j,j'}+\sum_{j'\in J} q_{\min\{j,j'\},\max\{j,j'\}} w_{j'}$ for all $j\in J$.
\end{itemize}
\end{lemma}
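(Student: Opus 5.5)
The plan is to reduce everything to the representation $h(w)=\frac{1}{2}w^TSw$ with $S$ a fixed symmetric matrix, and then read off each claim. Since $h$ is a homogeneous polynomial of degree two, we can write $h(w)=\sum_{j\leq j'} q_{j,j'}w_jw_{j'}$. Collecting the coefficients into a symmetric matrix $S$ gives $h(w)=\frac{1}{2}w^TSw$, where $S_{jj}=2q_{j,j}$ and $S_{jj'}=S_{j'j}=q_{\min\{j,j'\},\max\{j,j'\}}$ for $j\neq j'$.

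The first set of claims then follows by differentiation. Differentiating twice gives $\nabla h(w)=Sw$ and $\nabla^2 h(w')=S$ for every $w'$, so the Hessian is constant. This yields at once $h(w)=\frac{1}{2}w^T\nabla^2h(w')w$ and $\nabla h(w)=\nabla^2h(w')w$. The identity $\nabla h(w)\cdot w=w^TSw=2h(w)$ follows as well; alternatively, one can cite Euler's theorem for homogeneous functions of degree two. Convexity is not actually needed here beyond the fact that $S$ is positive semidefinite.

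The three concrete descriptions are then instances of the same formula. If $h(w)=w^TQw$ with $Q$ not necessarily symmetric, then $h(w)=w^T\frac{Q+Q^T}{2}w$, so $S=Q+Q^T$. Hence $\nabla h(w)=(Q+Q^T)w$, which equals $2Qw$ under the standing convention that $Q$ is symmetric; I would note this symmetry assumption explicitly, or symmetrize $Q$ first. The case $h(w)=\frac{1}{2}w^TQw$ is identical and gives $Qw$. For the coefficient form, compute $\partial h/\partial w_j$ term by term. The diagonal term contributes $2q_{j,j}w_j$, and each off-diagonal term containing $w_j$ contributes $q_{\min\{j,j'\},\max\{j,j'\}}w_{j'}$. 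Summing over $j'\in J$, with the diagonal term counted once inside the sum plus one extra $q_{j,j}w_j$, gives the stated expression.

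The main obstacle is reconciling this with the formula as printed. The statement reads $q_{j,j'}+\sum_{j'}\dots$, which appears to be a typo for $q_{j,j}w_j+\sum_{j'\in J}q_{\min\{j,j'\},\max\{j,j'\}}w_{j'}$. I would prove the corrected version and indicate the bookkeeping of the diagonal term. Everything else is routine.
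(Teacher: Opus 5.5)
Your proposal is correct. The paper gives no proof of this lemma; it presents these as straightforward properties of quadratic forms. Your reduction to $h(w)=\frac{1}{2}w^TSw$ with a constant symmetric Hessian $S$ is exactly the routine verification intended. Note that neither convexity nor positive semidefiniteness of $S$ is used anywhere, so your remark that convexity is needed only for $S$ being PSD can simply be dropped.

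Your caveats are genuine corrections to the statement as printed:
\begin{itemize}
\item Since $\nabla(w^TQw)=(Q+Q^T)w$, the first two bullets require $Q$ to be symmetric. For example, $Q$ with rows $(1,1)$ and $(-1,1)$ gives the convex $h(w)=w_1^2+w_2^2$, yet $2Qw\neq\nabla h(w)$.
\item The third bullet should read $q_{j,j}w_j+\sum_{j'\in J}q_{\min\{j,j'\},\max\{j,j'\}}w_{j'}$.
\item The term $2h_i(w)$ in the first paragraph should be $2h(w)$.
\end{itemize}
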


Thanks to  \cref{quadraticform:lemma} we can describe our contracts for quadratic objectives (and constraints) abstractly using $h$ and $\nabla h$, and later concretize them for a particular notational convention.

 \begin{restatable}{proposition}{simplequadraticobjlemma}\label{simplequadraticobjlemma}
Let $h_i:\mathbb{R}^J\to\mathbb{R}$ be a convex quadratic form, $F^i=h_i$  and $g_i:\mathbb{R}\to\mathbb{R}\cup\{+\infty\}$ be such that $g_i(z)=z$.  Then $g_i^0(z)=g_i(z)$, $g_i^*=\delta_{\{1\}}$,  $(g_i^*)^0\equiv 0$, $(g_i^*)^\infty=\delta_{\{0\}}$, $\nabla F^i=\nabla h_i$,  $  F^i_\nabla\left(y_i, w^i\right)=y_ih_i(w^i)$, and $\left(g_i\circ F^i\right)_\infty\left(\hat{x}\right)=\delta_{\{0\}}\left(\nabla h_i\left(\hat{x}\right)\right)$.
\end{restatable}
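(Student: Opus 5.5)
We verify the listed identities one at a time, in the order they appear in the statement. The only non-trivial one is the horizon function of $g_i\circ F^i$, which will use convexity of $h_i$ through \cref{quadraticform:lemma}.

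\emph{Items involving only $g_i$.} Since $g_i(z)=z$ takes real values other than $0$, its range is not contained in $\{0,+\infty\}$, so $g_i^0=g_i$. For the conjugate, $g_i^*(v)=\sup_{z\in\mathbb{R}}\{vz-z\}=\sup_z (v-1)z$, which is $0$ if $v=1$ and $+\infty$ otherwise; hence $g_i^*=\delta_{\{1\}}$. Because this is an indicator function, the definition of $h^0$ gives $(g_i^*)^0\equiv 0$. For the horizon function we take the base point $\bar z=1$. For $d=0$ the difference quotient is $0$. For $d\neq 0$, $g_i^*(1+td)=+\infty$ for every $t>0$, so the supremum is $+\infty$. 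Thus $(g_i^*)_\infty=\delta_{\{0\}}$, which is the object the statement writes as $(g_i^*)^\infty$.

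\emph{Items involving $F^i$.} We have $\nabla F^i=\nabla h_i$ by definition. By \cref{quadraticform:lemma}, $\nabla h_i(w)\cdot w=2h_i(w)$, so
\[F^i_\nabla(y_i,w^i)=\bigl(\nabla h_i(w^i)\cdot w^i-h_i(w^i)\bigr)y_i=y_ih_i(w^i).\]

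\emph{Horizon function of the composition.} Here $g_i\circ F^i=h_i$ is finite everywhere, so we take $\bar z=0$ with $h_i(0)=0$. Homogeneity gives $h_i(t\hat x)=t^2h_i(\hat x)$, so the difference quotient is $t\,h_i(\hat x)$ and its supremum over $t>0$ is $0$ if $h_i(\hat x)\le 0$ and $+\infty$ otherwise. Convexity of $h_i$ together with $h_i(0)=0$ and $\nabla h_i(0)=0$ yields $h_i\ge 0$. Therefore $(h_i)_\infty=\delta_{\{h_i(\hat x)=0\}}$.

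\emph{Main obstacle.} It remains to show that $h_i(\hat x)=0$ if and only if $\nabla h_i(\hat x)=0$. Write $H:=\nabla^2h_i$, which is constant and positive semidefinite. By \cref{quadraticform:lemma}, $h_i(\hat x)=\tfrac12\hat x^TH\hat x$ and $\nabla h_i(\hat x)=H\hat x$. If $H\hat x=0$, then $h_i(\hat x)=0$ immediately. Conversely, factor $H=B^TB$, which is possible since $H$ is positive semidefinite. Then $\hat x^TH\hat x=\|B\hat x\|^2=0$ forces $B\hat x=0$, and hence $H\hat x=0$. This gives $(g_i\circ F^i)_\infty(\hat x)=\delta_{\{0\}}(\nabla h_i(\hat x))$.
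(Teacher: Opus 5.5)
Your proposal is correct and follows essentially the paper's item-by-item verification, including the use of $\nabla h_i(w)\cdot w=2h_i(w)$ from Lemma~\ref{quadraticform:lemma} to get $F^i_\nabla(y_i,w^i)=y_ih_i(w^i)$. The only difference is in the horizon functions. The paper cites Proposition B.3.2.1 of Hiriart-Urruty and Lemar\'echal (the epigraph of $h_\infty$ is the asymptotic cone of $\operatorname{epi} h$) together with unstated properties of the epigraph of a convex quadratic form, whereas you compute them directly from the difference-quotient definition (base points $1$ and $0$, $h_i(t\hat x)=t^2h_i(\hat x)$, and $\nabla^2 h_i=B^TB$ to get $h_i(\hat x)=0\iff\nabla h_i(\hat x)=0$), which makes your argument self-contained.
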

\pointtoproof{simplequadraticobjlemma:proof}

To illustrate the use of Proposition~\ref{simplequadraticobjlemma}, note that primal problem \eqref{primalproblem} for $I=\{i_0\}$ and $F^{i_0}$ and $g_{i_0}$ as in Proposition~\ref{simplequadraticobjlemma} reduces to 
\begin{equation}
\operatorname{opt}_P:=\inf_{x\in \mathbb{R}^J}  c\cdot x + h_{{i}_0}(x)=\inf_{x\in \mathbb{R}^J}  c\cdot x + g_{i_0}\left(F^{i_0}(x)\right)
\end{equation}
and the associated dual problem  \eqref{dualproblem} reduces to
\begin{subequations}\label{quadobjdual}
\begin{alignat}{3}
\operatorname{opt}_D:
=&\sup_{\substack{y_{i_0}\in \mathbb{R},\\ w^{i_0}\in \mathbb{R}^J}}\quad&   y_{i_0}h_{i_0}\left(w^{i_0}\right)\\
&s.t.&&\notag\\
&&y_{i_0} \nabla h\left(w^{i_0}\right)&= c,\quad -y_{i_0}=1.
\label{quadobjdualfinal}
\end{alignat}
\end{subequations}
Because $F^i$ is not an affine function, variables $w^i$ do appear in \eqref{quadobjdual}. 

All primal solutions are feasible (as the primal problem is unconstrained linear plus quadratic minimization), dual feasibility is $-\bar{y}_{i_0}=1$, and complementary slackness is 
\begin{equation}\label{quadobjcs}
    h_{i_0}(\bar{x})-
    \bar{y}_{i_0}h_{i_0}\left(\bar{w}^{i_0}\right)=-\bar{y}_{i_0} \nabla h_{i_0}\left(\bar{w}^{i_0}\right)\cdot\bar{x}.
\end{equation}
In addition, primal ray definition \eqref{primalraydef} becomes
$\nabla h_{i_0}\left(\hat{x}\right)=0$  and $c\cdot \hat{x}<0$. Similarly,
 dual ray definition \eqref{dualraydef} becomes
\begin{equation*}
     \hat{y}_{i_0}\nabla h\left(\hat{w}^{i_0}\right) =0,\quad  \hat{y}_{i_0} =0,\quad  \hat{y}_{i_0}h_{i_0}\left(\hat{w}^{i_0}\right)   >0.
\end{equation*}
In particular, we can check that a dual ray does not exist as expected from the problem being unconstrained.

As stated by Lemma~\ref{equalobjlemma}, we can check that any pair $(\bar{x}, (\bar{y},\bar{w}))$ that satisfies \eqref{quadobjcs} and stationarity condition $\bar{y}_{i_0} \nabla h_{i_0}(\bar{w}^{i_0})= c$, has $\operatorname{obj}_P(\bar{x})=c\cdot \bar{x}+h(\bar{x})=\bar{y}_{i_0}h_{i_0}\left(\bar{w}^{i_0}\right)=\operatorname{obj}_D(\bar{y},\bar{w})$,
even if $\bar{y}$ does not satisfy dual feasibility constraint $-\bar{y}_{i_0}=1$. However, if $y$ is allowed to be an (a priori)  unfixed variable, then \eqref{quadobjdualfinal} includes non-convex functions in its objective and constraints. 

To get a \emph{clearly} convex re-formulation of the dual problem we can eliminate variable $y_{i_0}$ using the dual feasibility equation $-y_{i_0}=1$ to obtain the more familiar dual problem (e.g. see \cite[(3)]{vandenberghe2010cvxopt} and \cite[Section 6.3]{roos2020universal}) given by
\begin{subequations}
    \begin{alignat}{3}
    \label{quadobjdualconvex}
\operatorname{opt}_D:=&\sup_{ w^{i_0}\in \mathbb{R}^J}\quad&   &-h_{i_0}\left(w^{i_0}\right)\\
&s.t.&&\notag\\
&&0 &= c+\nabla h_{i_0}\left(w^{i_0}\right)
\end{alignat}
\end{subequations}
This reformulation does not need variable $y_{i_0}$ and all its dual solutions are module-wise feasible (i.e. dual feasibility reduces to the stationarity equation). In addition, complementary slackness conditions \eqref{quadobjcs} become
$h_{i_0}(\bar{x})+h_{i_0}\left(\bar{w}^{i_0}\right)= \nabla h_{i_0}\left(\bar{w}^{i_0}\right)\cdot\bar{x}$,
which, by Lemma~\ref{quadraticform:lemma}, is equivalent to
\begin{equation}\label{quadobjcsfinal}
   \left( \nabla h_{i_0}\left(\bar{x}\right)-\nabla h_{i_0}\left(\bar{w}^{i_0}\right)\right)\cdot\left(\bar{x}-\bar{w}^{i_0}\right)=0.
\end{equation}
In particular, complementary slackness condition \eqref{quadobjcsfinal} holds if $\bar{x}=\bar{w}^{i_0}$ in which case the stationarity conditions become the standard optimality conditions for linear plus quadratic unconstrained minimization given by $c=-\nabla h_{i_0}\left(\bar{x}\right)$. However, the complementary slackness connection between $\bar{x}$ and $\bar{w}^{i_0}$ can also be achieved through $\nabla h_{i_0}\left(\bar{x}\right)=\nabla h_{i_0}\left(\bar{w}^{i_0}\right)$, which may be weaker than $\bar{x}=\bar{w}^{i_0}$ when $\nabla^2 h$ is singular.

 \subsection{Quadratic Constraint}\label{quadraticconstsec}

 \begin{restatable}{proposition}{quadraticconstrlemma}\label{quadraticconstrlemma}
Let $h_i:\mathbb{R}^J\to \mathbb{R}$ be a \emph{convex quadratic form}, $q^i\in \mathbb{R}^{J}$, $u_i\in \mathbb{R}$, $F^i(x)={h_i(x)+q^i\cdot x -u_i}$ and
 $g_i=\delta_{\mathbb{R}_-}$.   Then,  $g_i^0\equiv 0$,  $g_i^*=(g_i^*)^\infty=\delta_{\mathbb{R}_+}$,  $(g_i^*)^0\equiv 0$,  $\nabla F^i(w^i)=\nabla h_i(w^i)+q^i$,  $  F^i_\nabla\left(y^i, w^i\right)=\left(h_i\left(w^i\right)+u\right)y_i$, and $\left(g_i\circ F^i\right)_\infty=\delta_\mathcal{D}$ for $\mathcal{D}:=\left\{d\in \mathbb{R}^J\,:\, \nabla h_i(d)=0,\quad q^i\cdot d \leq 0\right\}$.
\end{restatable}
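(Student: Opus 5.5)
Most claims follow by direct computation from the definitions and \cref{quadraticform:lemma}; the only substantive item is the horizon function $(g_i\circ F^i)_\infty$. I will first dispose of the routine items. Since $g_i=\delta_{\mathbb{R}_-}$ has range $\{0,+\infty\}$, the definition of $h^0$ gives $g_i^0\equiv 0$. Next, $g_i^*(v)=\sup_{z\le 0} vz$, which equals $0$ for $v\ge 0$ and $+\infty$ otherwise, so $g_i^*=\delta_{\mathbb{R}_+}$. This is again an indicator, hence $(g_i^*)^0\equiv 0$. Because $\mathbb{R}_+$ is a closed convex cone containing $0$, the horizon function of its indicator is itself ($\delta_{\mathbb{R}_+}(0+td)-\delta_{\mathbb{R}_+}(0)$ is $0$ or $+\infty$ independently of $t>0$), which gives $(g_i^*)_\infty=\delta_{\mathbb{R}_+}$. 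The gradient formula $\nabla F^i(w)=\nabla h_i(w)+q^i$ is immediate. For $F^i_\nabla$, I use \cref{quadraticform:lemma} ($\nabla h_i(w)\cdot w=2h_i(w)$):
\[
\nabla F^i(w)\cdot w-F^i(w)=2h_i(w)+q^i\cdot w-h_i(w)-q^i\cdot w+u_i=h_i(w)+u_i,
\]
and multiplying by $y_i$ gives the stated expression.

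For the horizon function I will use the set $C:=\{x: h_i(x)+q^i\cdot x\le u_i\}$, so that $g_i\circ F^i=\delta_C$. Assumption \eqref{assumptioneq:gofproper} supplies $\bar z\in C$. Then $(\delta_C)_\infty(d)=\sup_{t>0}\delta_C(\bar z+td)/t$, which is $0$ if $\bar z+td\in C$ for all $t>0$ and $+\infty$ otherwise. So the task is to show that the recession directions of $C$ are exactly $\mathcal{D}$. To do this I will expand, using the lemma's identities ($h_i(\bar z+td)=h_i(\bar z)+t\nabla h_i(\bar z)\cdot d+t^2h_i(d)$, where the cross term is $t\,\bar z^T\nabla^2h_i d=t\nabla h_i(\bar z)\cdot d$):
\[
F^i(\bar z+td)=F^i(\bar z)+t\bigl(\nabla h_i(\bar z)\cdot d+q^i\cdot d\bigr)+t^2h_i(d).
\]

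The main obstacle is showing this quadratic in $t$ stays $\le 0$ for all $t>0$ exactly when $d\in\mathcal{D}$. For the direction $d\in\mathcal{D}\Rightarrow$ recession: $\nabla h_i(d)=0$ gives $h_i(d)=\tfrac12\nabla h_i(d)\cdot d=0$ and $\nabla h_i(\bar z)\cdot d=\bar z^T\nabla^2h_i d=\nabla h_i(d)\cdot\bar z=0$ by symmetry of the Hessian. The expression therefore reduces to $F^i(\bar z)+t\,q^i\cdot d\le 0$. For the converse, boundedness above for $t\to\infty$ forces $h_i(d)\le 0$, and since $h_i$ is convex with $h_i\ge 0$, we get $h_i(d)=0$. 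Then $d$ minimizes the convex function $h_i$, so $\nabla h_i(d)=0$. By the same symmetry argument the linear coefficient reduces to $q^i\cdot d$, which must be $\le 0$. Hence $(g_i\circ F^i)_\infty=\delta_{\mathcal{D}}$.
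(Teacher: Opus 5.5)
Your proposal is correct and follows essentially the same route as the paper. The conjugate and the horizon function of the cone indicator are computed directly, $F^i_\nabla$ follows from $\nabla h_i(w)\cdot w=2h_i(w)$, and $(g_i\circ F^i)_\infty$ is identified as the indicator of the recession cone of the quadratic sublevel set $C$. The paper's proof only cites the epigraph/recession-cone characterization and ``simple properties of the level sets of quadratic functions'' for the last step; your expansion $F^i(\bar z+td)=F^i(\bar z)+t\left(\nabla h_i(\bar z)\cdot d+q^i\cdot d\right)+t^2h_i(d)$ together with $h_i\geq 0$ supplies those details explicitly.
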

\pointtoproof{quadraticconstrlemma:proof}

To illustrate the use of Proposition~\ref{quadraticconstrlemma}, note that primal problem \eqref{primalproblem} for $I=\{i_0\}$ and $F^{i_0}$ and $g_{i_0}$ as in Proposition~\ref{quadraticconstrlemma} reduces to 
\begin{align*}
\operatorname{opt}_P:&=\begin{alignedat}[t]{3}&\inf_{x\in \mathbb{R}^J}\quad&   c\cdot x\\
&s.t.&&\\
&&h_{i_0}(x)+q^{i_0}\cdot x &\leq u_{i_0}&
\end{alignedat}
\end{align*}
and the associated dual problem  \eqref{dualproblem} reduces to
\begin{subequations}
\label{quadconstrdualtoref}
\begin{align}
\operatorname{opt}_D:
=&\sup_{{y_{i_0}\in \mathbb{R}, w^{i_0}\in \mathbb{R}^J}}    \left(h_{i_0}\left(w^{i_0}\right)+u_{i_0}\right)y_{i_0}\\
&s.t.\notag\\
&\quad\quad y_{i_0} \left(\nabla h_{i_0}\left(w^{i_0}\right)+q^{i_0}\right)= c,\quad y_{i_0}\leq 0.
\end{align}
\end{subequations}
Because $F^i$ is not an affine function, variables $w^i$ do appear in \eqref{quadconstrdualtoref}. 

Primal feasibility becomes  $h_{i_0}(\bar{x})+q^{i_0}\cdot \bar{x} \leq u_{i_0}$, dual feasibility is $\bar{y}_{i_0}\leq 0$ plus $y_{i_0} \left(\nabla h_{i_0}\left(w^{i_0}\right)+q^{i_0}\right)= c$  and using \cref{quadraticform:lemma} we can write complementary slackness as 
\begin{align}
\notag
\bar{y}_{i_0} \left(h\left(\bar{w}^{i_0}\right)\right.&+\left.q^{i_0}\cdot \bar{w}^{i_0} -u_{i_0}\right)\\&=\bar{y}_{i_0}\left(\nabla h\left(\bar{w}^{i_0}\right)+q^{i_0}\right)\cdot\left(\bar{w}^{i_0}-\bar{x}\right).\label{quadraticconstcs}
\end{align}
In addition, primal ray definition \eqref{primalraydef} becomes
\begin{equation}
\nabla h_{i_0}\left(\hat{x}\right)=0, \quad q^{i_0}\cdot \hat{x}\leq 0 \quad\text{and}\quad c\cdot \hat{x}<0,\end{equation}
and dual ray definition \eqref{dualraydef} becomes
$\hat{y}_{i_0} \left(\nabla h_{i_0}\left(\hat{y}^{i_0}\right)+q^{i_0}\right)=0$,     $\hat{y}_{i_0} \leq 0$ and $\left(h_{i_0}\left(\hat{y}^{i_0}\right)+u_{i_0}\right)\hat{y}_{i_0}  >0$.

Note that complementary slackness condition \eqref{quadraticconstcs} becomes the standard Lagrangian duality complementary slackness condition when $\bar{w}^{i_0}=\bar{x}$. However, dual feasibility $\bar{y}_{i_0}\leq 0$ is the negation of standard non-negative Lagrangian multipliers. This is a side effect of our alignment with sign conventions for standard conic and linear programming duality (see Section~\ref{conic_constraints} and \ref{conicsectionmax}) and the fact that these conventions do not perfectly align with Lagrangian multiplier conventions (e.g. see \cite[Section 5.2.1]{boyd2004convex}).

Now, similarly to the quadratic objective from Section~\ref{quadraticobjsec}, the dual problem \eqref{quadconstrdualtoref} appears non-convex and hence needs a convex reformulation. As noted in \cite{roos2020universal}, this can be done through the change of variable $z^{i_0}=y_{i_0}w^{i_0}$ and the following function associated to $h_{i_0}$.

 \begin{restatable}{proposition}{quadraticformulationlemma}\label{quadraticformulationlemma}
Let $h_i:\mathbb{R}^J\to \mathbb{R}$ be a \emph{convex quadratic form}, and $\mathcal{G}_{h^i}:\mathbb{R}^J\times \mathbb{R}\to \mathbb{R}\cup\{+\infty\}$ be the closure of the perspective function of $h_i$ given by
\[\mathcal{G}_{h^i}\left(z^i, \mu_i\right):=\begin{cases}\frac{h_i\left(z^i\right)}{\mu_i}& \mu_i>0\\
0 & z^i=0, \quad \mu_i=0\\
\infty & \text{o.w.}\end{cases}.\]
Then $\mathcal{G}_{h^i}$ is a proper closed convex function.
In addition, for
\begin{equation}
\label{quadraticformulationlemmaconversion}
z^i\left(w^i,y_i\right):=y_i w^i,\quad
w^i\left(z^i,y_i\right):=\begin{cases}z^i/y_i &y_i<0 \\0&\text{o.w.}\end{cases}
    \end{equation}
we have 
$-\mathcal{G}_{h^i}\left(z^i\left(w^i,y_i\right), -y_i\right)=y_i h_i\left(w^i\right)$ and $
-\mathcal{G}_{h^i}\left(z^i, -y_i\right)=y_i h_i\left(w^i\left(z^i,y_i\right)\right)$ for any $y_i
\leq 0$.
\end{restatable}
\pointtoproof{quadraticformulationlemma:proof}

Using \cref{quadraticformulationlemma} and the fact that $y_i\nabla h_i\left(w^i\right)=\nabla h_i\left(y_i w^i\right)$ for any quadratic form $h_i$ we can reformulate \eqref{quadconstrdualtoref}  as the convex optimization problem 
\begin{subequations}\label{quadconstrdualconv}
\begin{alignat}{3}
\operatorname{opt}_D:=&\sup_{\substack{y_{i_0}\in \mathbb{R},\\ z^{i_0}\in \mathbb{R}^J}}    -\mathcal{G}_{h^{i_0}}\left(z^{i_0}, -y_{i_0}\right)+u_{i_0}y_{i_0}\\
&s.t.\notag\\
&\quad\quad  \nabla h_{i_0}\left(z^{i_0}\right)+q^{i_0}y_{i_0}= c,\quad y_{i_0}\leq 0,
\end{alignat}
\end{subequations}
and we can convert between the solution for \eqref{quadconstrdualtoref} and \eqref{quadconstrdualconv} through \eqref{quadraticformulationlemmaconversion}.

\section{Connection to other duality frameworks}\label{connectiontodualssection}

\subsection{Connection to Fenchel duality}\label{connectiontofenchelsection}
The Fenchel dual of \eqref{primalproblem} can be written as
\begin{subequations}\label{fencheldual}
\begin{alignat}{3}
\operatorname{opt}_{D_{\text{Fenchel}}}:=&\sup_{v\in \mathbb{R}^{I\times J}}\quad&  -\sum_{i \in I}  \left(g_i\circ F^i\right)^*\left(-v^i\right) &\\\
&s.t.\notag&&\\
&& \sum_{i\in I} v^i= c. \label{fencheldualstationarity}
\end{alignat}
\end{subequations}

As noted in \cite{roos2020universal}, getting a practical version of \eqref{fencheldual} reduces to getting practical formulations of the conjugate functions $\left(g_i\circ F^i\right)^*$. In \cite{roos2020universal} it is shown how such practical formulations can be obtained in an ad-hoc manner for various classes of functions. In contrast, through functions $\mathcal{O}_{F^i,g_i}$ (see \cref{dual:obj:def}), \eqref{dualproblem} provides a more mechanical construction (we only need a formulation of $F^i$, $\nabla F^i$ and $g_i^*$ instead of $\left(g_i\circ F^i\right)^*$, which as argued below should be easier). In this section we explore how $\mathcal{O}_{F^i,g_i}$ appears from a generic approximation of $\left(g_i\circ F^i\right)^*$ and discuss when this approximation is exact. To achieve this we need the following definitions of constraint qualification from \cite{rockafellar2009variational}.

\begin{definition}
$h:\mathbb{R}^J\to \mathbb{R}\cup\{+\infty\}$ is \emph{piecewise linear-quadratic} (PWLQ) if and only if
\[
h(x)=\begin{cases} h_l(x)+q^l\cdot x -u_l&x\in P_l, l\in L\\+\infty&\text{o.w.}\end{cases},
\]
where $L$ is finite and for each $l\in L$, $P_l$ is a polyhedron, $h_l$ is a quadratic form, $q^l\in\mathbb{R}^J$ and $u_l\in \mathbb{R}$. 

For $F^i:\mathbb{R}^J\to \mathbb{R}^{M_i}$ and $g_i:\mathbb{R}^{M_i} \to {\mathbb{R}\cup\{+\infty\}}$ let
$U_{i}:=\left\{u\in \mathbb{R}^{M_i}\,:\, \inf_{x\in \mathbb{R}^J}g_i\left(F^i(x)-u\right)<+\infty\right\}$.
We say $g_i\circ F^i$ satisfies constraint qualification if and only if
\begin{subequations}
\label{constraintqual}
\begin{align}
        0&\in \operatorname{int}\left(U_{i}\right), \text{or}\label{constraintqual:a}\\
        0&\in U_{i}, \text{ and $g_i\circ F^i$ is PWLQ}.\label{constraintqual:b}
        \end{align}
\end{subequations}
\end{definition}
Note that for the functions in Section~\ref{secificssection}/Table~\ref{initialfunctions},  \eqref{constraintqual:a} reduces to standard module-wise Slater conditions and \eqref{constraintqual:b} includes module-wise polyhedral Slater  conditions.

The following corollary establishes the connection between $\mathcal{O}_{F^i,g_i}$ and $\left(g_i\circ F^i\right)^*$. 
\begin{restatable}{corollary}{connectiontofenchelcoro}\label{connectiontofenchelcoro}
For $F^i:\mathbb{R}^J\to \mathbb{R}^{M_i}$ and $g_i:\mathbb{R}^{M_i} \to {\mathbb{R}\cup\{+\infty\}}$ that satisfy \eqref{assumptioneq}, and for all $\bar{v}^i\in \mathbb{R}^J$ 
\begin{equation}\label{connectiontofenchelcoroineq}
   - \left(g_i \circ F^i\right)^*\left(-\bar{v}^i\right)\geq \hspace{-3pt} \begin{alignedat}[t]{3}&\hspace{-2pt}\sup_{\substack{y^i \in \mathbb{R}^{M_i},\\w^i\in \mathbb{R}^J}
   }-\mathcal{O}_{F^i,g_i}\left(-y^i, w^i\right)\\
   &\quad s.t.\\
   &\quad\quad \nabla F^i\left(w^i\right)^T y^i=\bar{v}^i.\end{alignedat}
\end{equation}

 In addition, for all $\bar{v}^i\in \mathbb{R}^J$ and $\bar{y}^i\in \operatorname{dom}(g_i^*)$,
  \begin{equation}\label{connectiontofenchelcoroineqopt}
     \left(\bar{y}^i\cdot F^i\right)^*\left(\bar{v}^i\right)=\sup_{w^i\in \mathbb{R}^J} \bar{v}^i\cdot w^i-\bar{y}^i\cdot F^i\left(w^i\right)
 \end{equation}
 is a convex optimization problem. 
 
 Finally, let $S\left(\bar{v}^i, \bar{y}^i\right)=\arg \max_{w^i\in \mathbb{R}^J}\bar{v}^i\cdot w^i-\bar{y}^i\cdot F^i\left(w^i\right)$ and consider the  condition 
 \begin{equation}\label{connectiontofenchelcoroineqopt:cond}
     \left(\bar{y}^i\cdot F^i\right)^*\left(\bar{v}^i\right)<+\infty\quad \Rightarrow \quad    S\left(\bar{v}^i, \bar{y}^i\right)\neq \emptyset,
 \end{equation}
which states that \eqref{connectiontofenchelcoroineqopt} attains its optimal value whenever it is finite. 

If \eqref{connectiontofenchelcoroineqopt:cond} holds for all $\bar{v}^i\in \mathbb{R}^J$ and $\bar{y}^i\in \operatorname{dom}(g_i^*)$, and $g_i\circ F^i$ satisfies  \eqref{constraintqual}, then 
 \eqref{connectiontofenchelcoroineq} holds at equality with $\sup$ replaced by $\max$.
\end{restatable}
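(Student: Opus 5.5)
The plan is to compare $\left(g_i\circ F^i\right)^*$ with $\mathcal{O}_{F^i,g_i}$ through the scalarized functions $\bar{y}^i\cdot F^i$. The argument has three steps: (a) show that $\bar{y}^i\cdot F^i$ is convex whenever $\bar{y}^i\in\operatorname{dom}(g_i^*)$; (b) combine Fenchel--Young with the gradient inequality to get \eqref{connectiontofenchelcoroineq}; (c) use perturbation duality for the composite to get equality.

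\textbf{Convexity of the scalarization.} Fix $\bar{y}^i\in\operatorname{dom}(g_i^*)$. Consider the partial minimization
\[
\phi(x):=\inf_{u\in\mathbb{R}^{M_i}}\; g_i\left(F^i(x)+u\right)-\bar{y}^i\cdot u .
\]
By \eqref{assumptioneq:goffullyconvex} the function minimized is jointly convex in $(x,u)$. Substituting $z=F^i(x)+u$ gives
\[
\phi(x)=\bar{y}^i\cdot F^i(x)-g_i^*(\bar{y}^i),
\]
which is finite because $\bar{y}^i\in\operatorname{dom}(g_i^*)$. A partial infimum of a jointly convex function is convex, so $x\mapsto \bar{y}^i\cdot F^i(x)$ is convex. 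It is also $C^1$ by \eqref{assumptioneq:fdiff}. Hence \eqref{connectiontofenchelcoroineqopt} maximizes a concave function of $w^i$, which is the second claim.

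\textbf{The inequality.} Take any $(y^i,w^i)$ with $\nabla F^i(w^i)^Ty^i=\bar{v}^i$. If $g_i^*(-y^i)=+\infty$ the right-hand side term is $-\infty$ and there is nothing to prove, so assume $-y^i\in\operatorname{dom}(g_i^*)$. Fenchel--Young gives, for every $x$,
\[
g_i(F^i(x))\geq -y^i\cdot F^i(x)-g_i^*(-y^i).
\]
By the first step, $-y^i\cdot F^i$ is convex, so the gradient inequality at $w^i$ gives
\[
-y^i\cdot F^i(x)\geq -y^i\cdot F^i(w^i)-\bar{v}^i\cdot(x-w^i).
\]
Combining the two bounds yields
\[
-\bar{v}^i\cdot x-g_i(F^i(x))\leq -\bar{v}^i\cdot w^i+y^i\cdot F^i(w^i)+g_i^*(-y^i)=\mathcal{O}_{F^i,g_i}(-y^i,w^i),
\]
where the equality uses the definition of $F^i_\nabla$. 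Taking the supremum over $x$ and then the infimum over feasible $(y^i,w^i)$ gives \eqref{connectiontofenchelcoroineq}.

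\textbf{Equality.} The first sub-step is a conjugate chain rule. Apply perturbation duality (Rockafellar--Wets, Theorem~11.39 together with 11.23-type chain rules) to the bifunction $f(x,u)=g_i(F^i(x)-u)-(-\bar{v}^i)\cdot x$. This is convex by \eqref{assumptioneq:goffullyconvex}, and its feasible perturbation set is exactly $U_i$. Under \eqref{constraintqual:a}, or under \eqref{constraintqual:b} in the PWLQ case, I expect
\[
\left(g_i\circ F^i\right)^*(-\bar{v}^i)=\min_{\bar{y}^i\in\operatorname{dom}(g_i^*)}\; g_i^*(\bar{y}^i)+\left(\bar{y}^i\cdot F^i\right)^*(-\bar{v}^i),
\]
with the minimum attained. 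The second sub-step uses attainment \eqref{connectiontofenchelcoroineqopt:cond}. For the minimizing $\bar{y}^i$, whenever the value is finite there is $w^i\in S(-\bar{v}^i,\bar{y}^i)$. Since the problem is a smooth concave maximization, the first-order condition gives $\nabla F^i(w^i)^T\bar{y}^i=-\bar{v}^i$. The conjugate then evaluates to
\[
\left(\bar{y}^i\cdot F^i\right)^*(-\bar{v}^i)=-\bar{v}^i\cdot w^i-\bar{y}^i\cdot F^i(w^i)=F^i_\nabla(\bar{y}^i,w^i).
\]
Setting $y^i=-\bar{y}^i$ produces a feasible pair attaining the bound, which turns $\sup$ into $\max$. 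The case where the value is infinite needs a separate check.

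I expect the main obstacle to be this chain-rule/duality step. The difficulty is carefully matching the sign convention $F^i(x)-u$ in $U_i$ with the perturbation framework, and verifying that joint convexity \eqref{assumptioneq:goffullyconvex} replaces the usual requirement that $F^i$ be convex componentwise.
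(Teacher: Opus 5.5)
Your outline matches the paper's proof. It has three ingredients: convexity of $\bar{y}^i\cdot F^i$ on $\operatorname{dom}(g_i^*)$; the bound $(g_i\circ F^i)^*(-\bar{v}^i)\le g_i^*(\bar{y}^i)+(\bar{y}^i\cdot F^i)^*(-\bar{v}^i)\le g_i^*(\bar{y}^i)+F^i_\nabla(\bar{y}^i,w^i)$ whenever $\nabla F^i(w^i)^T\bar{y}^i=-\bar{v}^i$; and equality from a composite conjugate formula with attainment plus the first-order condition.

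The paper imports convexity and the composite formula from Theorem~\ref{simplecomposite}. Your partial-minimization proof of convexity is correct, and so is your Fenchel--Young plus gradient-inequality argument. Together they make the inequality self-contained, replacing \eqref{connectiontofenchelfirstmainineq} and Proposition~\ref{differentiableobjlemma}.

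Under \eqref{constraintqual:a}, your appeal to Rockafellar--Wets 11.39 also works, and it needs no chain rule. The function $f(x,u)=g_i(F^i(x)-u)+\bar{v}^i\cdot x$ is proper, lsc and convex by \eqref{assumptioneq}. The substitution $z=F^i(x)-u$ gives $f^*(0,-\bar{y}^i)=(\bar{y}^i\cdot F^i)^*(-\bar{v}^i)+g_i^*(\bar{y}^i)$, and $\operatorname{dom}p=U_i$. So $0\in\operatorname{int}(U_i)$ is exactly the hypothesis that yields equality with dual attainment. This also settles your sign worry. The infinite case is trivial: if $(g_i\circ F^i)^*(-\bar{v}^i)=+\infty$, both sides of \eqref{connectiontofenchelcoroineq} are $-\infty$.

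The gap is case \eqref{constraintqual:b}, which you only ``expect''. There $0$ may lie on the boundary of $U_i$, so 11.39 gives nothing. The PWLQ duality theorem of Rockafellar--Wets requires the perturbation function $f$ itself to be PWLQ. That does not follow from $g_i\circ F^i$ being PWLQ when $F^i$ is nonlinear.

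In fact no argument can close this case, because the $\max$ claim is false under \eqref{constraintqual:b} as written. Take $|J|=|M_i|=1$, $F^i(x)=x^2$ and $g_i=\delta_{\mathbb{R}_-}$, a quadratic inequality from Table~\ref{initialfunctions}. Then:
\begin{itemize}
\item \eqref{assumptioneq} holds;
\item $g_i\circ F^i=\delta_{\{0\}}$ is PWLQ;
\item $U_i=[0,\infty)\ni 0$;
\item \eqref{connectiontofenchelcoroineqopt:cond} holds by Lemma~\ref{attainmentlemma}.
\end{itemize}
For $\bar{v}^i\neq 0$ the left side of \eqref{connectiontofenchelcoroineq} is $0$. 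On the right, feasibility forces $y^i<0$ and $w^i=\bar{v}^i/(2y^i)$, so the objective is $y^i(w^i)^2=(\bar{v}^i)^2/(4y^i)<0$. The supremum is $0$ but is never attained.

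The paper's proof covers \eqref{constraintqual:b} only through the citation in Theorem~\ref{simplecomposite}, whose attainment claim fails on the same example. So your argument proves the corollary under \eqref{constraintqual:a}. To keep a polyhedral case, you need to strengthen \eqref{constraintqual:b}, e.g.\ to ``$f$ is PWLQ'' (true when $F^i$ is affine and $g_i$ is PWLQ). After that, PWLQ duality and your attainment step give the $\max$.
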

\pointtoproof{connectiontofenchelcoro:proof}

Inequality \eqref{connectiontofenchelcoroineq} shows dual problem \eqref{dualproblem} is a conservative approximation of Fenchel dual \eqref{fencheldual} and  \begin{equation}\label{fencheldualapprox}
\operatorname{opt}_{D_{\text{Fenchel}}}\geq \operatorname{opt}_{D},
\end{equation}
with equality under the additional conditions in \cref{connectiontofenchelcoro}.
The following lemma shows that for  all functions in Table~\ref{initialfunctions} the more technical condition \eqref{connectiontofenchelcoroineqopt:cond} always holds, so to guarantee equality in \eqref{fencheldualapprox} we only need Slater conditions  \eqref{constraintqual}.

\begin{restatable}{lemma}{attainmentlemma}\label{attainmentlemma}
If $(F^i, g^i)$ is in Table~\ref{initialfunctions}, then \eqref{connectiontofenchelcoroineqopt:cond} holds for all $\bar{v}^i\in \mathbb{R}^J$ and $\bar{y}^i\in \operatorname{dom}(g_i^*)$.
\end{restatable}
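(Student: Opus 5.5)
The proof goes row by row through Table~\ref{initialfunctions}. For each row the map $w\mapsto \bar{v}^i\cdot w-\bar{y}^i\cdot F^i(w)$ is either affine or a concave quadratic, and a concave quadratic (or affine) function that is bounded above on $\mathbb{R}^J$ attains its supremum. We first record this elementary fact once. If $\phi(w)=-\frac12 w^TPw+p\cdot w+r$ with $P$ positive semidefinite, then $\sup_w\phi<+\infty$ holds if and only if $p\in\operatorname{range}(P)$. In that case any $w$ with $Pw=p$ is a maximizer, by first-order optimality for concave differentiable functions. If $p\notin\operatorname{range}(P)$, decompose $p=p_1+p_2$ with $p_2\neq0$ in $\ker P=\operatorname{range}(P)^\perp$. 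Then $\phi(tp_2)=t\|p_2\|^2+r\to+\infty$, so the supremum is infinite. The affine case is $P=0$, where finiteness forces $p=0$ and then every $w$ attains the supremum.

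Next we check each row, using the description of $\operatorname{dom}(g_i^*)$ from Propositions~\ref{conicspecific}, \ref{twosidedspecific}, \ref{simplequadraticobjlemma} and \ref{quadraticconstrlemma}.

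\textbf{Linear, conic and two-sided rows.} Here $F^i$ is affine, so $\bar{v}^i\cdot w-\bar{y}^i\cdot F^i(w)=(\bar{v}^i-(A^i)^T\bar{y}^i)\cdot w+\bar{y}^i\cdot b^i$, or the analogue with $a^i$ and no constant. This is the affine case, so finiteness holds exactly when $\bar{v}^i=(A^i)^T\bar{y}^i$, and then $S(\bar{v}^i,\bar{y}^i)=\mathbb{R}^J\neq\emptyset$.

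\textbf{Quadratic objective.} Here $\operatorname{dom}(g_i^*)=\{1\}$, so $\bar{y}_i=1$ and the function is $\bar{v}^i\cdot w-h_i(w)$. By Lemma~\ref{quadraticform:lemma}, $h_i(w)=\frac12 w^T\nabla^2h_i\,w$ with $\nabla^2 h_i$ positive semidefinite by convexity, so the fact applies with $P=\nabla^2h_i$.

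\textbf{Quadratic inequality.} Here $\operatorname{dom}(g_i^*)=\mathbb{R}_+$. The function is $\bar{v}^i\cdot w-\bar{y}_i(h_i(w)+q^i\cdot w-u_i)$, which is again of the required form with $P=\bar{y}_i\nabla^2h_i\succeq0$ and $p=\bar{v}^i-\bar{y}_iq^i$. The subcase $\bar{y}_i=0$ is the affine case.

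The only points needing care are two. First, the domain restriction on $\bar{y}^i$ is what guarantees concavity, since $\bar{y}_i\ge0$ in the quadratic rows. Second, the table's sign conventions must be reconciled with the statement: \eqref{connectiontofenchelcoroineqopt:cond} is stated for $\bar{y}^i\in\operatorname{dom}(g_i^*)$ and not for $-\bar{y}^i$, and the propositions give these domains directly. Otherwise the argument is routine linear algebra.
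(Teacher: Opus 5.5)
Your proof is correct and follows essentially the same route as the paper, which simply observes that for every row of Table~\ref{initialfunctions} the objective $\bar{v}^i\cdot w^i-\bar{y}^i\cdot F^i(w^i)$ of \eqref{connectiontofenchelcoroineqopt} is affine or concave quadratic, so its unconstrained supremum is attained whenever it is finite. You add the range/kernel argument for that attainment fact, and you make explicit that concavity in the quadratic rows comes from $\bar{y}_i\geq 0$, which the paper leaves implicit. You are also right to read $\operatorname{dom}(g_i^*)$ off Propositions~\ref{simplequadraticobjlemma} and \ref{quadraticconstrlemma} rather than the table, since the table's quadratic-inequality entry for $g_i^*$ appears to have a sign slip.
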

\pointtoproof{attainmentlemma:proof}

Now, consider a dual solution $\bar{v}\in \mathbb{R}^{I\times J}$ for \eqref{fencheldual} that satisfies stationarity condition \eqref{fencheldualstationarity}. Evaluating the objective value of $\bar{v}$ to get a dual bound requires solving the convex optimization problems 
\begin{equation}\label{fenchelsubopt}
     \left(g_i\circ F^i\right)^*\left(-\bar{v}^i\right) :=\sup_{w^i\in \mathbb{R}^J}-\bar{v}^i\cdot w^i-g\left(F^i\left(w^i\right)\right)  .
\end{equation}
Conversely, for a dual solution $(\bar{y},\bar{w})\in \mathbb{R}^{M}\times \mathbb{R}^{I
\times J}$ for  \eqref{dualproblem} (that satisfies stationarity condition \eqref{stationarity}) we just need to evaluate $\mathcal{O}_{F^i,g_i}\left(-\bar{y}^i, \bar{w}^i\right)=-\left(\nabla F^i\left(\bar{w}^i\right)\bar{w}^i -  F^i\left(\bar{w}^i\right)\right)\cdot y^i+g^*_i\left(-\bar{y}^i\right)$,
which only requires solving the simpler convex optimization problems 
\begin{equation}
    g^*_i\left(-\bar{y}^i\right):=\sup_{z^i\in \mathbb{R}^{M_i}}-\bar{y}^i\cdot z^i-g\left(z^i\right).
\end{equation}
In particular, if we are only considering a fixed class of functions $g_i$ for which $g_i^*$ has a closed form (such as those in Table~\ref{initialfunctions}), then evaluating the objective value of $(\bar{y},\bar{w})$ does not require solving any optimization problem. In contrast, even if we restrict to functions $g_i$ with closed form  $g_i^*$ and affine functions $F^i$, \eqref{fenchelsubopt} may still be a non-trivial optimization problem whose optimal value may be unattained (e.g. if $g_i$ is the indicator function of a closed convex cone \eqref{fenchelsubopt} is a conic optimization problem). Hence,  \eqref{dualproblem} may be preferable over \eqref{fencheldual}, particularly when \eqref{fencheldualapprox} holds at equality.

\subsection{Connection to Lagrangian and generalized Lagrangian duality}\label{lagrangiansection}

We can fit \eqref{primalproblem} in the generalized nonlinear programming (GNLP) framework of \cite{rockafellar2023augmented} by letting $f_0:\mathbb{R}^J\to\mathbb{R}$, $g:\mathbb{R}^M\to\mathbb{R}\cup\{+\infty\}$, and $F:\mathbb{R}^J\to \mathbb{R}^M$ in \cite[(1.1)]{rockafellar2023augmented} be given by $f_0(x)=c\cdot x$, $g\left(\left(z^i\right)_{i\in I}\right)=\sum_{i\in I} g_i\left(z^i\right)$, and $F(x)=\left(F^i(x)\right)_{i\in I}$. Then the Lagrangian function $l:\mathbb{R}^J\times \mathbb{R}^M:\to \mathbb{R}\cup\{-\infty\}$ in \cite[(1.1)]{rockafellar2023augmented} becomes 
$l(x,y)= c\cdot x+\sum_{i\in I} y^i\cdot F^i(x)-g_i^*(y^i)$, which reduces to the standard Lagrangian function when  $|M_i|=1$ and $g_i=
\delta_{\mathbb{R}_-}$ for all $i\in I$.

Optimality condition $0\in \partial_x l(\bar{x},\bar{y})=0$ for $\inf_{x\in \mathbb{R}^J}l(x,y)$ (cf. \cite[(1.18)]{rockafellar2023augmented}) becomes
\begin{equation}\label{wolfeeq}
    c+ \sum_{i\in I}\nabla F^i\left(x\right)^T y^i= 0,
\end{equation}
which we can use to write the version of Wolfe's dual based on $\sup_{y\in \mathbb{R}^M}\inf_{x\in \mathbb{R}^J}l(x,y)$ given by 
\begin{equation}\label{wolfedual}
  \operatorname{opt}_D:=\sup_{y\in \mathbb{R}^{M}, x\in \mathbb{R}^{ J}}\left\{l(x,y)\,:\, \eqref{wolfeeq}\right\}.
\end{equation}
Then, multiplying \eqref{wolfeeq} by $x$ to replace $c\cdot x$ in the definition of $l(x,y)$ we can see that $l(x,y)$ in \eqref{wolfedual} can be replaced by 
$l(x,y)=-\sum_{i\in I}F^i_\nabla\left(y^i, x\right)+g_i^*(y^i)$ and hence \eqref{wolfedual} is a version of dual \eqref{dualproblem} where $-y$ is replaced by $y$ and $w^i=x$ for all $i\in I$. That is, with the exception of having copies of $x$ for each $i\in I$, dual \eqref{dualproblem} is the analog of Wolfe's dual for the GNLP Lagrangian dual of  \cite{rockafellar2023augmented} (or more precisely for the version of Wolfe's dual by \cite{dorn1960duality} as described in \cite{wolfe1961duality}).

In particular, for the standard Lagrangian setting (i.e.  $|M_i|=1$ and $g_i=
\delta_{\mathbb{R}_-}$ for all $i\in I$), complementary slackness condition \eqref{cscondition} without the module-wise copies $w^i$ (i.e. with $w^i=x$ for all $i\in I$) simplifies to $y_i
 F^i(x)=0$, which we recognize as the standard Lagrangian complementary slackness condition.  

\subsection{Connection to two-sided optimality conditions}\label{knitro:cs:section}

\cite{knitrotermination} introduces optimality conditions for optimization problems with two-sided nonlinear inequalities. In the convex case, the \emph{true two-sided inequalities} (i.e. those for which both bounds are finite) can only be two-sided linear inequalities so we focus our comparison to the context of Section~\ref{twosideddec}. Through $
\lambda_i=-y_i$, the optimality conditions of \cite{knitrotermination} for  \eqref{twosidedprimal}
are
\begin{subequations}   
\begin{alignat}{3}
    l_i\leq a^i\cdot \bar{x}&\leq u_i &\quad&\forall i\in I\label{knitro:pf}\\
    \sum_{i\in I} a^i\bar{y}_i &=c\label{knitro:stat}\\
   \bar{y}_i \min\{a^i\cdot \bar{x}-l_i,u_i-a^i\cdot \bar{x}\}&=0 &\quad&\forall i\in I\label{knitro:cs}
\end{alignat}
\end{subequations}
plus ``the appropriate sign of the multiplier [$y_i$] depends on which bound (if either) is binding (active) at the solution.''. In particular, note that when a constraint is inactive, \eqref{knitro:cs} has the desired effect of forcing $y_i=0$. However, if a constraint is active \eqref{knitro:cs} does not enforce the desired sign constraint on $y_i$, which is why the additional comment in words is needed. Unfortunately, deducing the correct sign for $y_i$ may not be straightforward for someone unfamiliar with optimization theory.

We can check that \eqref{knitro:pf} and \eqref{knitro:stat} are our primal feasibility and dual stationarity conditions from Section~\ref{twosideddec}. In addition, the duals in Section~\ref{twosideddec} are sign-unrestricted and dual feasibility is always satisfied. Hence, the only difference with our optimality conditions is between \eqref{knitro:cs} and our complementary slackness condition \eqref{signed:cs}, which is $\min\{l_i\bar{y}_i,u_i\bar{y}_i\}=\bar{y}_i a^i\cdot \bar{x}$ for all $i\in I$. We can also check that \eqref{signed:cs} not only imposes $y_i=0$ when the associated constraint is inactive, but it also imposes the correct sign when the constraint is active (e.g. if $a_i\cdot\bar{x}=l_i$, then \eqref{signed:cs} imposes $\bar{y}_i\geq0$ through the assumption $l_i\leq u_i$).

\subsection{Domain Driven Dual}\label{dds:section}

 The primal problem from \cite{karimi2020primal,karimi2024domain} can be written in our primal form \eqref{primalproblem} by letting $I=\{1\}$, $A\in \mathbb{R}^{M_1\times J}$, $F^1:
 \mathbb{R}^J\to \mathbb{R}^{M_1}$ be given by $F^1(x):=Ax$, $D\subseteq \mathbb{R}^{M_1}$ be a closed convex set, and $g_1:
 \mathbb{R}^{M_1}\to \mathbb{R}\cup\{+\infty\}$ be such that $g_1=\delta_{D}$. The domain driven dual of this problem is 
\begin{subequations}\label{DDS:dual}
\begin{alignat}{3}
\operatorname{opt}_{DD}:=&\inf_{y\in \mathbb{R}^{M_1}}&   \sigma_D(y)\\
&s.t.\notag&&\\
&&A^T y&= -c,
\;y\in D_*,&
\end{alignat}
\end{subequations}
where  $D_\infty$ is the recession cone of $D$ and $D_*:=\{y\in \mathbb{R}^{M_1}\,:\, y\cdot w\leq 0\quad \forall w\in D_\infty\}$. Using \cite[Proposition C.2.2.4]{hiriart2004fundamentals} we can conclude that $y\in D_*$ is redundant in \eqref{DDS:dual}. Then using the fact that $\delta_D^*=\sigma_D$ we can conclude that \eqref{DDS:dual} is equivalent to the corresponding realization of  
\eqref{dualproblem} with $-y$ replaced by $y$ and with $\operatorname{opt}_{DD}=-\operatorname{opt}_{D}$

\section{Proofs}\label{proofs:sec}
\subsection{Proofs from Section~\ref{dualsection} and ~\ref{rayssection}}\label{mainpropertiesproofssection}

To show the results from Section~\ref{dualsection} and ~\ref{rayssection} we use the following corollary of the connection between $g_i\circ F^i$ and $\mathcal{O}_{F^i,g_i}$ discussed in Section~\ref{connectiontofenchelsection}.

\begin{restatable}{corollary}{Ofenchel}\label{Ofenchel}
Let $F^i:\mathbb{R}^J\to \mathbb{R}^{M_i}$ and $g_i:\mathbb{R}^{M_i} \to {\mathbb{R}\cup\{+\infty\}}$  satisfy \eqref{assumptioneq}. Then, for any  $\bar{w}^i,\tilde{w}^i\in \mathbb{R}^J$ and  ${y}^i\in \mathbb{R}^{M_i}$ such that $ \nabla F^i\left(\bar{w}^i\right)^T {y}^i=\nabla F^i\left(\tilde{w}^i\right)^T {y}^i$,
 \begin{equation}\label{OfenchelOeq}
 F^i_\nabla({y}^i, \bar{w}^i)=F^i_\nabla({y}^i, \tilde{w}^i).\end{equation}

Furthermore, for any $ (\bar{x},\bar{w}^i,{y}^i)\in \mathbb{R}^J \times\mathbb{R}^J \times \mathbb{R}^{M_i}$ we have
\begin{equation}\label{Ofencheleq}
    g_i\circ F^i(\bar{x})+\mathcal{O}_{F^i,g_i}\left({y}^i, \bar{w}^i\right)\geq \left(\nabla F^i\left(\bar{w}^i\right)^T {y}^i\right)\cdot \bar{x},
\end{equation}
and for any  $(\bar{x},{y}^i)\in \mathbb{R}^J \times \mathbb{R}^{M_i}$ there exists $\bar{w}^i\in \mathbb{R}^J$ such that  \eqref{Ofencheleq} holds at equality if and only if \eqref{Ofencheleq} holds at equality for $\bar{w}^i=\bar{x}$.

Finally, for any $\hat{x}\in \mathbb{R}^J$
\begin{equation}\label{Ofenchelhorizon}
    \left(g_i\circ F^i\right)_\infty(\hat{x})\geq \sup_{w\in \mathbb{R}^J}\sigma_{\operatorname{dom}(g^*_i)}\left(\nabla F^i(w)\hat{x}\right)
\end{equation}
and for any $ (\hat{w}^i,{y}^i)\in \mathbb{R}^J \times \mathbb{R}^{M_i}$ and $S_i:=\operatorname{dom}\left(g_i\circ F^i\right)$
\begin{equation}\label{Ofencheldomain}
\sigma_{S_i}\left(\nabla F^i\left(\hat{w}^i\right)^T {y}^i\right)\leq 
F^i_\nabla\left({y}^i, \hat{w}^i\right)+\left(g^*_i\right)_\infty\left({y}^i\right)
\end{equation}
\end{restatable}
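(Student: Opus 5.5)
The plan is to reduce everything to the Fenchel--Young inequality for $g_i$ plus one structural fact: \emph{for every $y^i\in\operatorname{dom}(g_i^*)$, the scalar function $\varphi_{y^i}(x):=y^i\cdot F^i(x)$ is convex (and $C^1$ by \eqref{assumptioneq:fdiff}).} I would prove this first from \eqref{assumptioneq:goffullyconvex}, in four steps.
\begin{enumerate}
\item Fix $x_1,x_2\in\mathbb{R}^J$, $\lambda\in[0,1]$ and a point $z\in\operatorname{dom}(g_i)$.
\item Put $d:=F^i(\lambda x_1+(1-\lambda)x_2)-\lambda F^i(x_1)-(1-\lambda)F^i(x_2)$.
\item Apply joint convexity to the pairs $(x_k,u_k)$ with $u_k:=z-F^i(x_k)$, both having value $g_i(z)$. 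This gives $g_i(z+d)\le g_i(z)$.
\item Iterating (replace $z$ by $z+d$) gives $g_i(z+td)\le g_i(z)$ for all $t\in\mathbb{N}$. By convexity of $g_i$ this yields $(g_i)_\infty(d)\le 0$.
\end{enumerate}
For $y^i\in\operatorname{dom}(g_i^*)$, Fenchel--Young along $z+td$ gives $y^i\cdot d\le (g_i)_\infty(d)\le 0$, which is exactly convexity of $\varphi_{y^i}$.

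With convexity in hand, \eqref{Ofencheleq} is a two-line chain. If $y^i\notin\operatorname{dom}(g_i^*)$ the left side is $+\infty$ and there is nothing to prove. Otherwise I chain two inequalities:
\[
g_i(F^i(\bar x))+g_i^*(y^i)\ge y^i\cdot F^i(\bar x)\ge y^i\cdot F^i(\bar w^i)+\left(\nabla F^i(\bar w^i)^Ty^i\right)\cdot(\bar x-\bar w^i),
\]
where the first step is Fenchel--Young and the second is the gradient inequality for $\varphi_{y^i}$. Rearranging with $F^i_\nabla(y^i,\bar w^i)=y^i\cdot(\nabla F^i(\bar w^i)\bar w^i-F^i(\bar w^i))$ gives \eqref{Ofencheleq}.

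For \eqref{OfenchelOeq}, I would note that $F^i_\nabla(y^i,w)=\varphi_{y^i}^*(\nabla\varphi_{y^i}(w))$ whenever $\varphi_{y^i}$ is convex. Hence it depends only on the gradient $\nabla F^i(w)^Ty^i$, and two points with equal gradients give equal values. The main obstacle is $y^i\notin\operatorname{dom}(g_i^*)$, where convexity is unavailable. There I would first try to argue directly that $\varphi_{y^i}$ takes the value $\varphi_{y^i}(\bar w)+\nabla\varphi_{y^i}(\bar w)\cdot(x-\bar w)$ along the segment. If that fails, I would restrict the claim to $\operatorname{dom}(g_i^*)$, which is the only case used downstream (outside it $\mathcal{O}_{F^i,g_i}=+\infty$ anyway).

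For the equality characterization: equality in \eqref{Ofencheleq} at some $\bar w^i$ forces equality in both links of the chain. Equality in the gradient inequality means $\bar x$ minimizes the convex function $\varphi_{y^i}-(\nabla F^i(\bar w^i)^Ty^i)\cdot x$. Hence $\nabla F^i(\bar x)^Ty^i=\nabla F^i(\bar w^i)^Ty^i$, and by \eqref{OfenchelOeq} the right side and $\mathcal{O}$ are unchanged when $\bar w^i$ is replaced by $\bar x$. The converse direction is trivial.

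For \eqref{Ofenchelhorizon}, fix $x_0\in\operatorname{dom}(g_i\circ F^i)$, which exists by \eqref{assumptioneq:gofproper}, and any $w$ and $y\in\operatorname{dom}(g_i^*)$. Apply \eqref{Ofencheleq} at $\bar x=x_0+t\hat x$, subtract $g_i(F^i(x_0))$, divide by $t$, and let $t\to\infty$. This gives $(g_i\circ F^i)_\infty(\hat x)\ge y\cdot\nabla F^i(w)\hat x$. Taking the supremum over $y$ and then over $w$ gives the support function bound.

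For \eqref{Ofencheldomain}, assume $(g_i^*)_\infty(y^i)<\infty$ (otherwise the inequality is trivial). Pick $y_0\in\operatorname{dom}(g_i^*)$ and apply \eqref{Ofencheleq} with $y_0+sy^i$ at any $x\in S_i$. Since $F^i_\nabla$ is linear in $y$, dividing by $s$ and letting $s\to\infty$ gives
\[
(\nabla F^i(\hat w^i)^Ty^i)\cdot x\le F^i_\nabla(y^i,\hat w^i)+(g_i^*)_\infty(y^i).
\]
Taking the supremum over $x\in S_i$ finishes the proof.
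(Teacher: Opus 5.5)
Your hesitation about \eqref{OfenchelOeq} for $y^i\notin\operatorname{dom}(g_i^*)$ is justified. That part of the statement is false as written, so your ``first try'' cannot succeed, and the restriction is necessary rather than a fallback. Take $|J|=|M_i|=1$, $g_i\equiv 0$ and $F^i(x)=x^3$. Then \eqref{assumptioneq} holds and $\operatorname{dom}(g_i^*)=\{0\}$. For $y^i=1$, $\bar w^i=1$, $\tilde w^i=-1$ you get $\nabla F^i(\bar w^i)^Ty^i=3=\nabla F^i(\tilde w^i)^Ty^i$, while $F^i_\nabla(y^i,w)=2w^3$ takes the values $2$ and $-2$. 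The paper's proof has the same hole: it invokes Proposition~\ref{differentiableobjlemma}, which assumes convexity, for $h=y^i\cdot F^i$ with arbitrary $y^i$. (For the affine and quadratic $F^i$ of Table~\ref{initialfunctions} the identity does hold for every $y^i$, because equal gradients of a quadratic form force equal values.)

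Your justification for the restriction needs one correction. The proof of \cref{unboundedtheo} applies \eqref{OfenchelOeq} with $y^i=-\hat y^i$ for a dual ray. But \eqref{dualrayineq} only guarantees $(g_i^*)_\infty(-\hat y^i)<+\infty$, i.e.\ $-\hat y^i$ is a recession direction of $\operatorname{dom}(g_i^*)$, not necessarily a point of it. Your lemma covers this at no cost. The inequality $y\cdot d\le 0$ you derive is preserved under nonnegative combinations and limits, so $y\cdot F^i$ is convex for every $y$ in the closed conic hull of $\operatorname{dom}(g_i^*)$, which contains all its recession directions. State \eqref{OfenchelOeq} for that cone.

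Everything else in your proposal is correct, and it takes a more elementary route than the paper.
\begin{itemize}
\item Convexity: the paper gets convexity of $y^i\cdot F^i$ for $y^i\in\operatorname{dom}(g_i^*)$ from an external result packaged in Theorem~\ref{simplecomposite}. You derive it directly from \eqref{assumptioneq:goffullyconvex} via the shift $u_k=z-F^i(x_k)$.
\item Inequality \eqref{Ofencheleq}: the paper chains Fenchel's inequality for $g_i\circ F^i$, the composite conjugate bound \eqref{connectiontofenchelfirstmainineq} and the Legendre identity \eqref{differentiableobjlemmaequiv:a}. Your chain (Fenchel--Young for $g_i$ plus the gradient inequality for $y^i\cdot F^i$) is the same estimate with $(g_i\circ F^i)^*$ unpacked. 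Your minimizer argument for the equality case plays the role of \eqref{differentiableobjlemmaequiv:b}.
\item Inequalities \eqref{Ofenchelhorizon} and \eqref{Ofencheldomain}: this is the bigger difference. The paper passes through conjugate domains, using $f_\infty=\sigma_{\operatorname{dom}(f^*)}$, Lemma~\ref{horizonineqlemma} and horizon-function calculus for the auxiliary functions $h^1_{w^i},h^2_{w^i}$. You divide \eqref{Ofencheleq} by $t$ (resp.\ $s$) along $x_0+t\hat x$ (resp.\ $y_0+sy^i$) and let it tend to infinity. This avoids the conjugate-domain identities and the auxiliary functions altogether.
\end{itemize}
The paper's route ties the corollary to reading $\mathcal{O}_{F^i,g_i}$ as an upper approximation of $(g_i\circ F^i)^*$ (Corollary~\ref{connectiontofenchelcoro}). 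Yours is self-contained and shows exactly where convexity of $y^i\cdot F^i$ is used, which is what exposes the problem above.
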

\pointtoproof{Ofenchel:proof}

\ifrepeattheo
\weakduality*
\fi
\begin{proof}{\textbf{Proof of Theorem~\ref{weakduality}}\hspace*{0.5em}}
\label{weakduality:proof}
Summing \eqref{Ofencheleq} from Corollary~\ref{Ofenchel} for all $i \in I$  (with ${y}^i=-\bar{y}^i$) we get
 $\sum_{i\in I} g_i\left(F^i\left(\bar{x}\right)\right)+ \left(\nabla F^i\left(\bar{w}^i\right)^T \bar{y}^i\right)\cdot x\geq -\sum_{i\in I} \mathcal{O}_{F^i,g_i}(-\bar{y}^i, \bar{w})$. 
The result  follows by   \eqref{stationarity}.\Halmos
\end{proof}

\ifrepeattheo
\optimalityconditions*
\fi
\begin{proof}{\textbf{Proof of Theorem~\ref{optimalityconditions}}\hspace*{0.5em}}
\label{optimalityconditions:proof}
If $(\bar{y},\bar{w})$ satisfies  \eqref{stationarity} then 
$c\cdot \bar{x}+\sum_{i\in I} g_i\left(F^i\left(\bar{x}\right)\right) +\sum_{i \in I}   \mathcal{O}_{F^i,g_i}(-\bar{y}^i, w^i)=\sum_{i\in I} \alpha_i$
for $\alpha_i:= g_i\left(F^i\left(\bar{x}\right)\right)+\mathcal{O}_{F^i,g_i}(-\bar{y}^i, \bar{w}^i)-\left(\nabla F^i\left(\bar{w}^i\right)^T (-\bar{y}^i)\right)\cdot\bar{x}$.
By Corollary~\ref{Ofenchel} (with ${y}^i=-\bar{y}^i$), $\alpha_i\geq0$ for all $i\in I$ and by Theorem~\ref{weakduality} $\sum_{i\in I}\alpha_i\geq0$. Then $\sum_{i\in I}\alpha_i=0$ if and only if $\alpha_i=0$ for all $i\in I$ and the result follows. Finally, by Corollary~\ref{Ofenchel} (with ${y}^i=-\bar{y}^i$) if \eqref{optimaliticonditioneq} holds for some $(\bar{x},\bar{y}^i,\bar{w}^i)\in \mathbb{R}^J\times \mathbb{R}^{M_i}\times \mathbb{R}^J$, then it also holds for $(\bar{x},\bar{y}^i,\bar{x})$, which shows the last statement.\Halmos
\end{proof}

\ifrepeattheo
\csoptimalitycorollary*
\fi
\begin{proof}{\textbf{Proof of Corollary~\ref{csoptimalitycorollary}}\hspace*{0.5em}}
\label{csoptimalitycorollary:proof}
If $\bar{x}$ is primal feasible, then $\operatorname{obj_P}(\bar{x})=c\cdot \bar{x}+\sum_{i\in I} g_i\left(F^i\left(\bar{x}\right)\right)$. Similarly, if $(\bar{y}^i,\bar{w}^i)$ is dual feasible for each $i\in I$, then $\mathcal{O}_{F^i,g_i}^0(-\bar{y}^i, \bar{w}^i)=\mathcal{O}_{F^i,g_i}(-\bar{y}^i, \bar{w}^i)$ for each $i\in I$ and $\operatorname{obj_D}(\bar{y},\bar{w})=-\sum_{i \in I} \mathcal{O}_{F^i,g_i}(-\bar{y}^i, \bar{w}^i)$. Then the complementary slackness based optimality conditions are equivalent to \eqref{stationarity} and \eqref{optimaliticonditioneq} for all $i\in I$ and the results follows from Theorem~\ref{optimalityconditions}. \Halmos
\end{proof}

\ifrepeattheo
\equalobjlemma*
\fi
\begin{proof}{\textbf{Proof of Lemma~\ref{equalobjlemma}}\hspace*{0.5em}}
\label{equalobjlemma:proof}
The result follows by summing \eqref{cscondition} for all $i\in I$ and applying \eqref{stationarity}. \Halmos
\end{proof}

\ifrepeattheo
\unboundedtheo*
\fi
\begin{proof}{\textbf{Proof of Theorem~\ref{unboundedtheo}}\hspace*{0.5em}}
\label{unboundedtheo:proof}
For the primal case, let $h(x)=c\cdot x+\sum_{i\in I} g_i\left(F^i(x)\right)$. If $\bar{x}\in\mathbb{R}^J$ is a primal feasible solution such that $h(\bar{x})<+\infty$, then by  
\cite[Proposition B.3.2.8]{hiriart2004fundamentals} we have that $h_\infty(d)=c\cdot d+\sum_{i\in I} (g_i\circ F^i)_\infty(d)$. In addition,
$h(\bar{x}+t \hat{x})\leq h(\bar{x})+t h_\infty(\hat{x})$ 
for all $t>0$. Then if $\hat{x}$ is a primal ray the result follows by letting $t$ go to infinity.

For the dual case, first note that by \eqref{OfenchelOeq} in Corollary~\ref{Ofenchel} (with ${y}^i=-\bar{y}^i$) and \eqref{rayequalnabla} we have that $(\hat{y},\bar{w})$ is also a dual ray so we may assume $\bar{w}=\hat{w}$. Let $h= \sum_{i \in I}   \mathcal{O}_{F^i,g_i}(-y^i, \bar{w}^i)= \sum_{i \in I}   \mathcal{O}_{F^i,g_i}(-y^i, \hat{w}^i)$. Because $h(y)$ is proper, closed and convex and $h(\bar{y})<+\infty$, by  \cite[Proposition B.3.2.8]{hiriart2004fundamentals} we have that $h_\infty(\hat{y})=\sum_{i\in I} \left(\nabla F^i\left(\hat{w}^i\right)\hat{w}^i -  F^i\left(\hat{w}^i\right)\right)\cdot {\hat{y}^i}+(g_i^*)_\infty\left(\hat{y}^i\right)$. In addition,
$h(\bar{y}+t \hat{y})\leq h(\bar{y})+t h_\infty(\hat{y})$
for all $t>0$. Finally, $(\bar{y}+t \hat{y},\bar{w})=(\bar{y}+t \hat{y},\hat{w})$ satisfies \eqref{stationarity} for all $t$. The result then follows by letting $t$ go to infinity.\Halmos
\end{proof}

\ifrepeattheo
\infeasibilitytheo*
\fi
\begin{proof}{\textbf{Proof of Theorem~\ref{infeasibilitytheo}}}
\label{infeasibilitytheo:proof}
Let $\hat{x}\in \mathbb{R}^J$ be a primal ray and for each $w\in \mathbb{R}^{I\times J}$ let $v(w)=\left(\nabla F^i\left(w^i\right)\hat{x}\right)_{i\in I}\in \mathbb{R}^M$, $S(w)=\{y\in \mathbb{R}^M\,:\, \sum_{i\in I} \nabla F^i\left(w^i\right)^{T}y^i= c\}$ and 
$C=\left\{y\in \mathbb{R}^M\,:\, -y^i\in \operatorname{dom}(g_i^*)\quad \forall i\in I \right\}$. We have $\sigma_{S(w)}(v(w))=c\cdot\hat{x}$ when $S(w)\neq \emptyset$, and $\sigma_{S(w)}(v(w))=-\infty$ when $S(w)= \emptyset$.
In addition,  
\begin{align*}
c\cdot \hat{x} &<-\sum_{i\in I} (g\circ F)_\infty(\hat{x})\\
&\leq -\sum_{i\in I} \sup_{w^i\in\mathbb{R}^J}\sigma_{\operatorname{dom}(g_i^*)}(\nabla F^i\left(w^i\right)\hat{x})\\
&=-\sup_{w\in\mathbb{R}^{I\times J}}\sigma_{C}\left(-v(w)\right)=\inf_{w\in\mathbb{R}^{I\times J},y\in C} v(w)\cdot y
\end{align*}
where the first inequality follows from \eqref{primalraydef} and the second inequality follows from \eqref{Ofenchelhorizon} in Corollary~\ref{Ofenchel}. Then, for any $w\in \mathbb{R}^{I\times J}$ such that $S(w)\neq \emptyset$ we have $\sup\{v(w)\cdot y\,:\, y\in S(w)\}< \inf\{v(w)\cdot y\,:\, y\in C\}$,
so $S(w)\cap C=\emptyset$ for all $w\in \mathbb{R}^{I\times J}$  and the dual problem is infeasible.

Let $(\hat{y},\hat{w})\in \mathbb{R}^{M}\times \mathbb{R}^{I\times J}$ be a dual ray and for each $i\in I$ let $S_i:=\operatorname{dom}\left(g_i\circ F^i\right)$ and $v^i:=\nabla F^i(\hat{w}^i)^T \hat{y}^i$. By \eqref{Ofencheldomain} from Corollary~\ref{Ofenchel} (with ${y}^i=-\hat{y}^i$) for all $i\in I$ we have 
\begin{equation}\label{Ofencheldomainrestate}
\sigma_{S_i}\left(-v^i\right)\leq 
F^i_\nabla\left(-\hat{y}^i, \hat{w}^i\right)+\left(g^*_i\right)_\infty\left(-\hat{y}^i\right).
\end{equation}
Let's assume for a contradiction that $\bigcap_{i\in I} S_i\neq \emptyset$ and let $k\in I$ so that $\bigcap_{i\in I\setminus\{k\}} S_i\neq \emptyset$. Noting that condition \eqref{dualrayeq} for a dual ray is equivalent to $v^k=\sum_{i\in I\setminus\{k\}} -v^i$, we have that  \cite[Theorem 11.24(e)]{rockafellar2009variational} implies
\begin{equation}\label{Ofencheldualsupportcalculus}\sigma_{\bigcap_{i\in I\setminus\{k\}}S_i}\left(v^k\right)\leq
\sum_{i\in I\setminus\{k\}} \sigma_{S_i}(-v^i).
\end{equation}
Combining \eqref{Ofencheldualsupportcalculus} with \eqref{Ofencheldomainrestate} for $i\in I\setminus\{k\}$ and dual ray condition 
\eqref{dualrayineq} we get
\begin{equation}\label{Ofencheldualalmostlast}
\sigma_{\bigcap_{i\in I\setminus\{k\}}S_i}\left(v^k\right)< 
-F^k_\nabla\left(-\hat{y}^k, \hat{w}^k\right)-\left(g^*_k\right)_\infty\left(-\hat{y}^k\right).
\end{equation}
Combining \eqref{Ofencheldualalmostlast} with \eqref{Ofencheldomainrestate} for $i=k$ we get $
\sigma_{\bigcap_{i\in I\setminus\{k\}}S_i}\left(v^k\right)< 
-\sigma_{S_k}\left(-v^k\right)$ or equivalently $
\sup\{v^k\cdot x\,:\, x\in\bigcap_{i\in I\setminus\{k\}}S_i\}< 
\inf\{v^k\cdot x\,:\,x\in S_k\}$, which contradicts $\bigcap_{i\in I} S_i\neq \emptyset$. \Halmos
\end{proof}

\section{Proofs from Section~\ref{secificssection}}

\ifrepeattheo
\conicspecific*
\fi
\begin{proof}{\textbf{Proof of Proposition~\ref{conicspecific}}}
\label{conicspecific:proof}
The characterization of $g_i^*$ follows from known properties of the conjugate of an indicator function (e.g. \cite[Example E.1.1.5]{hiriart2004fundamentals}). The characterization of the asymptotic functions follows from \cite[Proposition B.3.2.1, Proposition B.3.2.8]{hiriart2004fundamentals} and the fact that the epigraph of indicator of a closed convex cone is also a closed convex cone and the recession cone of a closed convex cone is itself. The other characterizations are direct.\Halmos
\end{proof}

\ifrepeattheo
\twosidedspecific*
\fi
\begin{proof}{\textbf{Proof of Proposition~\ref{twosidedspecific}}}
\label{twosidedspecific:proof}
The characterization of $g_i^*$ follows from known properties of the conjugate of an indicator function (e.g. \cite[Example E.1.1.5]{hiriart2004fundamentals}). The characterization of the asymptotic functions follows from \cite[Proposition B.3.2.1, Proposition B.3.2.8]{hiriart2004fundamentals} and the fact that the epigraph of any support function is a closed convex cone, that the recession cone of a closed convex cone is itself, and the fact that $\delta_{[l_i,u_i]}^{
\infty}=\delta_{
\{0\}}$. The other characterizations are direct.\Halmos
\end{proof}

\ifrepeattheo
\simplequadraticobjlemma*
\fi
\begin{proof}{\textbf{Proof of Proposition~\ref{simplequadraticobjlemma}}}
\label{simplequadraticobjlemma:proof}
The characterization of $g^*$  is direct from the definition of the convex conjugate. The characterization of the asymptotic functions
follows from \cite[Proposition B.3.2.1]{hiriart2004fundamentals} and simple properties of the epigraph of a convex quadratic form. The characterization of $
\nabla F^i$ is direct and that of $F^i_\Delta$ follows from the fact that $\nabla h_i\left(w^i\right)\cdot w^i=2h_i\left(w^i\right)$ by \cref{quadraticform:lemma}.
\Halmos
\end{proof}

\ifrepeattheo
\quadraticconstrlemma*
\fi
\begin{proof}{\textbf{Proof of Proposition~\ref{quadraticconstrlemma}}}
\label{quadraticconstrlemma:proof}
The characterization of $g_i^*$ follows from known properties of the conjugate of an indicator function (e.g. \cite[Example E.1.1.5]{hiriart2004fundamentals}). The characterization of the asymptotic functions follows from \cite[Proposition B.3.2.1]{hiriart2004fundamentals}, the fact that the epigraph of indicator of a closed convex cone is also a closed convex cone, that the recession cone of a closed convex cone is itself and simple properties of the level sets of quadratic functions. The characterization of $
\nabla F^i$ is direct and that of $F^i_\Delta$ follows from the fact that $\nabla h_i\left(w^i\right)\cdot w^i=2h_i\left(w^i\right)$ for any quadratic form $h_i$.\Halmos
\end{proof}

\ifrepeattheo
\quadraticformulationlemma*
\fi
\begin{proof}{\textbf{Proof of Proposition~\ref{quadraticformulationlemma}}}
\label{quadraticformulationlemma:proof}
Follows from simple case analysis and the fact that $ h_i\left(y_i w^i\right)=y_i^2 h_i\left( w^i\right)$ for any quadratic form $h_i$.\Halmos
\end{proof}

\section{Proofs from Section~\ref{connectiontodualssection}}

We use Theorem 3.1 and Lemma 3.2 from \cite{vielma25}, which when specialized to our setting can be written as follows (see also \cite{burke2021study,boct2007new,boct2008new,ioan2009generalized,combari1994sous,gissler2023note,pennanen1999graph}).

\begin{theorem}[\cite{vielma25}]\label{simplecomposite}
If  $F^i:\mathbb{R}^J\to \mathbb{R}^{M_i}$ and $g_i:\mathbb{R}^{M_i} \to \mathbb{R}\cup \{+\infty\}$ satisfy \eqref{assumptioneq}, 
then for any $\bar{v}^i\in \mathbb{R}^J$,
\begin{equation}\label{connectiontofenchelfirstmainineq}
    \left(g_i \circ F^i\right)^*\left(\bar{v}^i\right)\leq \inf_{y^i \in Y^i}\left(y^i\cdot F^i\right)^*\left(\bar{v}^i\right)+g_i^*\left(y^i\right),
\end{equation}
where $Y^i$ can equivalently taken to be equal to $\mathbb{R}^{M_i}$ or $\operatorname{dom}(g^*_i)$. 

If $g_i\circ F^i$ also satisfies constraint qualification \eqref{constraintqual}, then  \eqref{connectiontofenchelfirstmainineq} holds at equality and the infimum in  \eqref{connectiontofenchelfirstmainineq} is a minimum (possibly with $\left(g_i \circ F^i\right)^*\left(\bar{v}^i\right)=\left(y^i\cdot F^i\right)^*\left(\bar{v}^i\right)+g_i^*\left(y^i\right)=+\infty$ for all $y^i\in \mathbb{R}^{M_i}$).

Finally, $\bar{y}^i\cdot F^i(\cdot)$ is convex for all $\bar{y}^i\in \operatorname{dom}(g^*_i)$.
\end{theorem}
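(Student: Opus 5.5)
We need to prove the theorem from vielma25. It has three parts: the inequality \eqref{connectiontofenchelfirstmainineq} together with the choice of $Y^i$, equality with attainment under \eqref{constraintqual}, and convexity of $\bar y^i\cdot F^i$ for $\bar y^i\in\operatorname{dom}(g_i^*)$.

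\textbf{Step 1: the inequality.} We use the Fenchel--Young inequality. For any $y^i$ and $x$ we have $g_i(F^i(x))\geq y^i\cdot F^i(x)-g_i^*(y^i)$, so
\[
\bar v^i\cdot x-g_i(F^i(x))\leq \bar v^i\cdot x-y^i\cdot F^i(x)+g_i^*(y^i).
\]
Taking the supremum over $x$ gives $(g_i\circ F^i)^*(\bar v^i)\leq (y^i\cdot F^i)^*(\bar v^i)+g_i^*(y^i)$, and then we take the infimum over $y^i$. The right-hand side is $+\infty$ whenever $y^i\notin\operatorname{dom}(g_i^*)$. Here we need $(y^i\cdot F^i)^*>-\infty$, which holds because it is a supremum of finite values. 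Hence the infimum over $\mathbb{R}^{M_i}$ equals the infimum over $\operatorname{dom}(g_i^*)$.

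\textbf{Step 2: convexity of $\bar y^i\cdot F^i$.} This follows from \eqref{assumptioneq:goffullyconvex} by a recession argument. Fix $x,x'$, $\lambda\in[0,1]$, and set $x_\lambda=\lambda x+(1-\lambda)x'$. Let $r:=\lambda F^i(x)+(1-\lambda)F^i(x')-F^i(x_\lambda)$. The claim is that $-r$ lies in the recession cone of $\operatorname{epi} g_i$, in the sense that $g_i(z-tr)\le g_i(z)$ for all $z\in\operatorname{dom} g_i$ and $t\ge 0$.

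To show this, use \eqref{assumptioneq:gofproper} together with joint convexity of $\phi(x,u)=g_i(F^i(x)+u)$. Pick any $z_0\in\operatorname{dom} g_i$. Applying joint convexity at the points $(x,sz_0'-F^i(x))$-type shifts, scaled by $t$, shows that $\phi$ stays bounded along the direction $-r$. Explicitly, $\phi(x_\lambda, \lambda u+(1-\lambda)u')\le \lambda\phi(x,u)+(1-\lambda)\phi(x',u')$, and $F^i(x_\lambda)+\lambda u+(1-\lambda)u' = \lambda(F^i(x)+u)+(1-\lambda)(F^i(x')+u')-r$. Choosing $F^i(x)+u=F^i(x')+u'=z$ gives $g_i(z-r)\le g_i(z)$. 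Iterating this, and using closedness and convexity, yields the recession property for all $t\ge0$.

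The recession cone of $\operatorname{epi} g_i$ is dual to $\operatorname{dom} g_i^*$ via the support function: $\sigma_{\operatorname{dom} g_i^*}=(g_i)_\infty$. So $(g_i)_\infty(-r)\le 0$ gives $\bar y^i\cdot(-r)\le 0$, that is, $\bar y^i\cdot r\ge 0$. This is exactly the convexity inequality for $\bar y^i\cdot F^i$.

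\textbf{Step 3: equality under \eqref{constraintqual}.} This is the main obstacle. We use perturbation duality, with
\[
\Phi(x,u)=g_i(F^i(x)-u)-\bar v^i\cdot x,
\]
which is jointly convex by \eqref{assumptioneq:goffullyconvex}. Define the value function $p(u)=\inf_x\Phi(x,u)$. Then $-p(0)=(g_i\circ F^i)^*(\bar v^i)$, and $\operatorname{dom} p=U_i$.

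Next compute the conjugate of $\Phi$. We have $\Phi^*(0,y)=\sup_{x,u}\,y\cdot u-g_i(F^i(x)-u)+\bar v^i\cdot x$. Substituting $z=F^i(x)-u$, this equals $\sup_x\,(\bar v^i\cdot x+y\cdot F^i(x))+g_i^*(-y)$, which is $(-y\cdot F^i)^*(\bar v^i)+g_i^*(-y)$.

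Standard results (Rockafellar--Wets Theorem 11.39) then apply. If $0\in\operatorname{int}(\operatorname{dom} p)$, then $p(0)=\max_y -\Phi^*(0,y)$, with attainment. After the sign change $y\mapsto -y$, this gives equality in \eqref{connectiontofenchelfirstmainineq} with a minimum. In the PWLQ case \eqref{constraintqual:b}, $\Phi$ is piecewise linear-quadratic, since $F^i$ is then effectively handled through $g_i\circ F^i$ being PWLQ. The polyhedral version of Theorem 11.39 then gives the same conclusion under $0\in\operatorname{dom} p$ alone.

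Two details need care. The first is the degenerate case $p(0)=-\infty$, where both sides are $+\infty$ and any $y^i$ attains. The second is justifying PWLQ of $\Phi$ from PWLQ of $g_i\circ F^i$. If this fails directly, I would restrict to the affine-$F^i$ situation or argue via epi-graphical calculus (Rockafellar--Wets 11.42) on the composite.
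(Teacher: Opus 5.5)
Your Steps 1 and 2 and the interior case of Step 3 are correct. Together they give a self-contained argument, whereas the paper simply cites Theorem 3.1 and Lemma 3.2 of \cite{vielma25}. Fenchel--Young gives the inequality and the two choices of $Y^i$. Joint convexity gives $g_i(z-r)\le g_i(z)$, so $-r$ is a recession direction of $g_i$, and therefore $\bar y^i\cdot r\ge 0$ for every $\bar y^i\in\operatorname{dom}(g_i^*)$. Your $\Phi$ is proper, lsc and convex with $\operatorname{dom}p=U_i$, so case \eqref{constraintqual:a} follows from standard perturbation duality, and Step 1 covers $p(0)=-\infty$.

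The gap is case \eqref{constraintqual:b}. The PWLQ refinement of perturbation duality requires the bivariate function $\Phi$ to be PWLQ. Condition \eqref{constraintqual:b} only says that $x\mapsto g_i(F^i(x))$ is PWLQ, and the former does not follow from the latter, not even for affine $F^i$. So neither your inference nor your fallback works.

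Here is a concrete example. Let $K=\{(s,a,b)\in\mathbb{R}^3: s\ge\|(a,b)\|_2\}$, $g_i=\delta_K$ and $F^i(x)=(x_1,x_1,x_2)$. Then $F^i(x)\in K$ if and only if $x_1\ge 0$ and $x_2=0$. Hence $g_i\circ F^i$ is the indicator of a polyhedron, \eqref{assumptioneq} holds, and $0\in U_i$. Yet $\Phi$ is a linear term plus the indicator of the non-polyhedral set $\{(x,u): F^i(x)-u\in K\}$, so it is not PWLQ.

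Worse, take $\bar v^i=(0,1)$. Then $(g_i\circ F^i)^*(\bar v^i)=0$. Writing $y=(y_s,y_a,y_b)$, finiteness of $(y\cdot F^i)^*(\bar v^i)+g_i^*(y)$ forces $y_s+y_a=0$, $y_b=1$ and $-y\in K$, i.e.\ $y_a\ge\sqrt{y_a^2+1}$, which is impossible. So the right-hand side of \eqref{connectiontofenchelfirstmainineq} is $+\infty$ and equality fails.

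Attainment can fail as well. Take $F^i(x)=x^2$ on $\mathbb{R}$ and $g_i=\delta_{\mathbb{R}_-}$, so $g_i\circ F^i=\delta_{\{0\}}$. Equality holds, but for $\bar v^i\neq 0$ the infimum $\inf_{y>0}(\bar v^i)^2/(4y)=0$ is not attained.

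Read literally, then, the claim under \eqref{constraintqual:b} is false, and no argument can close this step. What is needed is the stronger hypothesis that the perturbation function $(x,u)\mapsto g_i(F^i(x)-u)$ is PWLQ. This holds, for example, when $g_i$ is PWLQ and $F^i$ affine, or when $g_i(z)=z$ and $F^i$ is quadratic; these cover the polyhedral cases of Table~\ref{initialfunctions}. Under that hypothesis your appeal to the PWLQ version of perturbation duality is legitimate.
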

\begin{proof}{\textbf{Proof}\hspace*{0.5em}}
The statements on $\left(g_i \circ F^i\right)^*$ follow from \cite[Theorem 3.1]{vielma25}. The statement on $\bar{y}^i\cdot F^i(\cdot)$ follows from \cite[Lemma 3.2]{vielma25}.
\Halmos
\end{proof}

We also use the following properties of the classical Legendre transform (e.g. see \cite[Section 26]{rockafellar2015convex} and \cite[Section 11.C]{rockafellar2009variational}).
\begin{proposition}\label{differentiableobjlemma}
Let $h:\mathbb{R}^J\to \mathbb{R}$ be a convex differentiable function and $\mathcal{L}_h:\mathbb{R}^J\to \mathbb{R}\cup\{+\infty\}$ be given by
\[\mathcal{L}_h(v)=\begin{cases}\nabla h(w)\cdot w-h(w)&\nabla h(w)=v\\+\infty&\text{ o.w. }\end{cases}.\]
Then, $\mathcal{L}_h$ is well defined (i.e. we get the same value for any $w\in \mathbb{R}^J$ such that $\nabla h(w)=v$) and 
\begin{equation}\label{differentiableobjlemmaineq}
h^*(v) \leq \mathcal{L}_h(v)
\end{equation}
for all $v\in \mathbb{R}^J$. In addition,  for any $x,w,v\in \mathbb{R}^J$ we have
\begin{subequations}\label{differentiableobjlemmaequiv}
\begin{align}
    h^*(\nabla h(w))&=\mathcal{L}_h(\nabla h(w)), \label{differentiableobjlemmaequiv:a}\\
    h(x)+h^*(v)&=v\cdot x  \quad\Leftrightarrow\quad  v=\nabla h(x).\label{differentiableobjlemmaequiv:b}
\end{align}  
\end{subequations}
In particular, \eqref{differentiableobjlemmaineq} holds at equality if there exist $\bar{w}\in \mathbb{R}^J$ such that $\nabla h(\bar{w})=v$ or if $h^*(v)=+\infty$.
\end{proposition}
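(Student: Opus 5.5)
The plan is to treat \cref{differentiableobjlemma} as a direct consequence of the Fenchel--Young inequality and of the first-order characterization of minimizers of convex differentiable functions. No earlier result of the paper is needed.

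\textbf{Well-definedness and \eqref{differentiableobjlemmaineq}.} For any $w$ with $\nabla h(w)=v$, the function $x\mapsto v\cdot x-h(x)$ is concave and differentiable with gradient $v-\nabla h(x)$, which vanishes at $x=w$. Hence $w$ is a global maximizer and
\[
h^*(v)=v\cdot w-h(w)=\nabla h(w)\cdot w-h(w).
\]
This gives two things at once. First, every $w$ with $\nabla h(w)=v$ yields the common value $h^*(v)$, so $\mathcal{L}_h$ is well defined. Second, \eqref{differentiableobjlemmaineq} holds at equality whenever such a $w$ exists, which is \eqref{differentiableobjlemmaequiv:a}. If no such $w$ exists, then $\mathcal{L}_h(v)=+\infty$ and \eqref{differentiableobjlemmaineq} is trivial. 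Therefore \eqref{differentiableobjlemmaineq} holds for all $v$, and the ``in particular'' claim follows: equality holds if some $\bar w$ has $\nabla h(\bar w)=v$, and also if $h^*(v)=+\infty$, because then both sides are $+\infty$.

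\textbf{Equivalence \eqref{differentiableobjlemmaequiv:b}.} For the reverse direction, apply the previous paragraph with $w=x$ and $v=\nabla h(x)$ to get $h^*(v)=v\cdot x-h(x)$. For the forward direction, suppose $h(x)+h^*(v)=v\cdot x$. Since $h$ is finite-valued, $h^*(v)$ is finite and
\[
v\cdot x-h(x)=\sup_z \{v\cdot z-h(z)\},
\]
so $x$ maximizes the differentiable concave function $z\mapsto v\cdot z-h(z)$. The first-order necessary condition then gives $v=\nabla h(x)$.

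\textbf{Main obstacle.} There is no substantial obstacle. The only care required is with extended values: one must note that $h^*(v)>-\infty$ always holds because $h$ is finite everywhere, so $h^*(v)\ge v\cdot 0-h(0)$. With that, the sums and equalities above are well posed, and the case $h^*(v)=+\infty$ is correctly excluded from the equality in \eqref{differentiableobjlemmaequiv:b}.
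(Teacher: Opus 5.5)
Your proposal is correct and takes essentially the same route as the paper: both rest on the observation that $\nabla h(\bar w)=v$ is exactly the first-order condition for $\bar w$ to attain the supremum in $h^*(v)=\sup_w\{v\cdot w-h(w)\}$, and this single fact yields well-definedness, \eqref{differentiableobjlemmaineq}, and \eqref{differentiableobjlemmaequiv}. Your write-up just makes explicit what the paper compresses into ``follows directly'': sufficiency of the zero-gradient condition by concavity, its necessity for the forward direction of \eqref{differentiableobjlemmaequiv:b}, and the extended-value bookkeeping.
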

\begin{proof}{\textbf{Proof}\hspace*{0.5em}}
Condition $\nabla h(\bar{w})=v$ is equivalent to the optimal value of convex optimization problem  
$h^*(v)=\sup_{w}\left\{v\cdot w-h(w)\right\}$ being finite and attained by $\bar{w}$. When such $\bar{w}$ exists, the optimal value is  $\nabla h(\bar{w})\cdot \bar{w}-h(\bar{w})$, which shows that $\mathcal{L}_h$ is well defined and inequality \eqref{differentiableobjlemmaineq} (with strict inequality when the optimal value in $h^*(w)$ is finite, but not attained). Equation \eqref{differentiableobjlemmaequiv} then follows directly.\Halmos
\end{proof}

\ifrepeattheo
\connectiontofenchelcoro*
\fi
\begin{proof}{\textbf{Proof of Corollary~\ref{connectiontofenchelcoro}}\hspace*{0.5em}}
\label{connectiontofenchelcoro:proof}
Let $\bar{y}^i\in \operatorname{dom}(g^*_i)$ and
 $h=\bar{y}^i\cdot F^i$. Then $h$ is differentiable and by Theorem~\ref{simplecomposite} it is also convex. Then \eqref{connectiontofenchelcoroineqopt} is a convex optimization problem. In addition, $\nabla h\left(w^i\right)=\nabla F^i\left(w^i\right)^T\bar{y}^i$ so $\nabla h\left(w^i\right)\cdot w^i-h\left(w^i\right)=F^i_\nabla\left(\bar{y}^i, w^i\right)$ and hence $\mathcal{L}_h(-\bar{v}^i)=\inf_{w^i\in\mathbb{R}^J}\left\{F^i_\nabla\left(\bar{y}^i, w^i\right)\,:\, \nabla F^i\left(w^i\right)^T\bar{y}^i=-\bar{v}^i\right\}$ . Then, by  Proposition~\ref{differentiableobjlemma} we have $-\left(\bar{y}^i\cdot F^i\right)^*\left(-\bar{v}^i\right)\geq\sup_{w^i\in\mathbb{R}^J}\left\{-F^i_\nabla\left(\bar{y}^i, w^i\right)\,:\, \nabla F^i\left(w^i\right)^T\bar{y}^i=-\bar{v}^i\right\}$, with equality and with $\sup$ replaced by $\max$ if there exist $\bar{w}^i\in \mathbb{R}^J$ such that $\nabla F^i\left(\bar{w}^i\right)^T\bar{y}^i=-\bar{v}^i$.

By Theorem~\ref{simplecomposite}, 
$ -\left(g_i \circ F^i\right)^*\left(-\bar{v}^i\right)\geq \sup_{y^i \in \operatorname{dom}(g^*_i)}-\left(y^i\cdot F^i\right)^*\left(-\bar{v}^i\right)-g_i^*\left(y^i\right)$ with equality and with $\sup$ replaced by $\max$ if $g_i\circ F^i$ satisfies  \eqref{constraintqual}. Combining this with results of the previous  paragraph we get
$ -\left(g_i \circ F^i\right)^*\left(-\bar{v}^i\right)\geq \sup_{y^i \in \operatorname{dom}(g^*_i),w^i\in\mathbb{R}^J}\left\{-F^i_\nabla\left({y}^i, w^i\right)-g_i^*\left(y^i\right)\,:\, \nabla F^i\left(w^i\right)^T{y}^i=-\bar{v}^i\right\}$ or equivalently $ -\left(g_i \circ F^i\right)^*\left(-\bar{v}^i\right)\geq \sup_{y^i \in \mathbb{R}^{M_i},w^i\in\mathbb{R}^J}\left\{-F^i_\nabla\left(-{y}^i, w^i\right)-g_i^*\left(-y^i\right)\,:\, \nabla F^i\left(w^i\right)^T{y}^i=\bar{v}^i\right\}$, with equality and with $\sup$ replaced by $\max$ if $g_i\circ F^i$ satisfies  \eqref{constraintqual} and for all $\bar{y}^i \in \operatorname{dom}(g^*_i)$ there exist $\bar{w}^i\in \mathbb{R}^J$ such that $\nabla F^i\left(\bar{w}^i\right)^T\bar{y}^i=-\bar{v}^i$ whenever $-\left(\bar{y}^i\cdot F^i\right)^*\left(-\bar{v}^i\right)>-\infty$. The result follows by noting that this last condition for equality is equivalent to the optimal value  in \eqref{connectiontofenchelcoroineqopt} being attained whenever it is finite (i.e. condition \eqref{connectiontofenchelcoroineqopt:cond}).\Halmos
\end{proof}

To prove Corollary~\ref{Ofenchel} we also need the following lemma. 
\begin{lemma}\label{horizonineqlemma}
Let $f^i:\mathbb{R}^{n} \to {\mathbb{R}\cup\{+\infty\}}$ for $i\in \{1,2\}$ be proper closed convex functions such that $f_1\leq f_2$. Then $(f_1)_\infty\leq (f_2)_\infty$.
\end{lemma}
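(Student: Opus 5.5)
The plan is to argue directly from the definition of the horizon function and use the freedom in the choice of base point. Since $f_1\le f_2$ and $f_2$ is proper, pick $\bar z$ with $f_2(\bar z)<+\infty$. Then $f_1(\bar z)\le f_2(\bar z)<+\infty$, and since $f_1$ is proper, $f_1(\bar z)$ is finite. So $\bar z$ is a legitimate base point for both horizon functions. The definition of $h_\infty$ states that any base point in the domain may be used; this independence is standard, e.g. \cite[Proposition B.3.2.1]{hiriart2004fundamentals}, and I will invoke it rather than reprove it.

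The obstacle is that a pointwise comparison of difference quotients,
\[
\frac{f_1(\bar z+t\bar d)-f_1(\bar z)}{t}\quad\text{versus}\quad \frac{f_2(\bar z+t\bar d)-f_2(\bar z)}{t},
\]
does not directly give an inequality. The numerators satisfy $f_1(\bar z+t\bar d)\le f_2(\bar z+t\bar d)$, but the subtracted terms satisfy $f_1(\bar z)\le f_2(\bar z)$, which points the wrong way. To fix this I would use the characterization of $h_\infty$ as a limit. For a proper closed convex $h$, the difference quotient is nondecreasing in $t$, so
\[
h_\infty(\bar d)=\lim_{t\to+\infty}\frac{h(\bar z+t\bar d)-h(\bar z)}{t}=\lim_{t\to+\infty}\frac{h(\bar z+t\bar d)}{t},
\]
where the second equality holds because $h(\bar z)$ is finite, so $h(\bar z)/t\to 0$.

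With this limit form the comparison works. For every $t>0$ we have $f_1(\bar z+t\bar d)/t\le f_2(\bar z+t\bar d)/t$, using extended arithmetic with $+\infty$ on the right when needed. Passing to the limit as $t\to+\infty$ gives $(f_1)_\infty(\bar d)\le (f_2)_\infty(\bar d)$. Both limits exist in $\mathbb{R}\cup\{+\infty\}$ because of the monotonicity of the quotients.

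Alternatively, I could argue geometrically. The epigraph of $h_\infty$ is the recession cone of $\operatorname{epi}h$. Since $f_1\le f_2$, we have $\operatorname{epi}f_2\subseteq\operatorname{epi}f_1$, and for nonempty closed convex sets, $C\subseteq D$ implies $C_\infty\subseteq D_\infty$. Hence $\operatorname{epi}(f_2)_\infty\subseteq\operatorname{epi}(f_1)_\infty$, which is the same conclusion. I expect the only delicate point to be justifying the common base point and the monotone-limit form, both of which are standard.
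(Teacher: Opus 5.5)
Your proof is correct, but it takes a different route from the paper. The paper works on the dual side. From $f_1\le f_2$ it gets $f_2^*\le f_1^*$, hence $\operatorname{dom}(f_1^*)\subseteq\operatorname{dom}(f_2^*)$ and $\sigma_{\operatorname{dom}(f_1^*)}\le\sigma_{\operatorname{dom}(f_2^*)}$. It then concludes from the identity $f_\infty=\sigma_{\operatorname{dom}(f^*)}$ for proper closed convex $f$ \cite{rockafellar2009variational}.

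You stay on the primal side. You take a common base point $\bar z\in\operatorname{dom}(f_2)\subseteq\operatorname{dom}(f_1)$ and use the monotone-limit form of the difference quotient. This makes the wrong-way term $(f_2(\bar z)-f_1(\bar z))/t$ vanish as $t\to+\infty$. Your epigraph variant, $\operatorname{epi} f_2\subseteq\operatorname{epi} f_1$ plus monotonicity of recession cones of nonempty closed convex sets, is equally valid.

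What each route buys:
\begin{itemize}
\item Your argument is more elementary and works directly from the paper's definition of $h_\infty$. It needs only base-point independence (which is where closedness enters) and the nondecreasing difference quotients of a convex function along a ray.
\item The paper's argument leans on a heavier conjugate-duality identity, but is then a one-liner and never has to discuss base points. It also reuses exactly the identity $\sigma_{\operatorname{dom}(f^*)}=f_\infty$ that the paper invokes anyway to prove \eqref{Ofenchelhorizon} and \eqref{Ofencheldomain}, so it fits the surrounding machinery.
\end{itemize}
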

\begin{proof}{\textbf{Proof}\hspace*{0.5em}}
If $f_1\leq f_2$, then $f_2^*\leq f_1^*$,  $\operatorname{dom}(f_1^*)\subseteq \operatorname{dom}(f_2^*)$ and $\sigma_{\operatorname{dom}(f_1^*)}\subseteq \sigma_{\operatorname{dom}(f_2^*)}$. The result follows from 
\cite[Proposition 11.5]{rockafellar2009variational}, which states that $\sigma_{\operatorname{dom}(f^*)}=f_\infty$ for any proper closed convex function $f$.
\Halmos
\end{proof}

\ifrepeattheo
\Ofenchel*
\fi
\begin{proof}{\textbf{Proof of Corollary~\ref{Ofenchel}}\hspace*{0.5em}}
\label{Ofenchel:proof}
For ${y}^i\in \mathbb{R}^{M_i}$, let $h:={y}^i\cdot F^i$ so that $\nabla h\left({w}^i\right)=\nabla F^i\left({w}^i\right)^T{y}^i$ and $\mathcal{L}_{h}\left(\nabla h\left({w}^i\right)\right)=F^i_\nabla\left({y}^i, {w}^i\right)$ for any $w^i\in \mathbb{R}^{M_i}$. Then \eqref{OfenchelOeq} follows because $\mathcal{L}_{h}$ is well defined by Proposition~\ref{differentiableobjlemma}.

Inequality \eqref{Ofencheleq} for $ (\bar{x},\bar{w}^i,{y}^i)\in \mathbb{R}^J \times\mathbb{R}^J \times \mathbb{R}^{M_i}$ follows from
\begin{subequations}\label{Ofenchel:ineqs}
\begin{align}
\label{Ofenchel:ineqs:a}
    \nabla h\left(\bar{w}^i\right)\cdot \bar{x}-g_i\circ F^i(\bar{x})&\leq \left(g_i\circ F^i\right)^*\left(\nabla h\left(\bar{w}^i\right)\right)\\
    \label{Ofenchel:ineqs:b}
    &\leq h^*\left(\nabla h\left(\bar{w}^i\right)\right) + g_i^*\left({y}^i\right)\\
    \label{Ofenchel:ineqs:c}
    &= \mathcal{L}_{h}\left(\nabla h\left(\bar{w}^i\right)\right)+ g_i^*\left({y}^i\right)\\
    &=\mathcal{O}_{F^i,g_i}\left({y}^i, \bar{w}^i\right),
\end{align}
\end{subequations}
where \eqref{Ofenchel:ineqs:a} follows from Fenchel's inequality (e.g. \cite[Proposition 11.3]{rockafellar2009variational}) for $g_i\circ F^i$, \eqref{Ofenchel:ineqs:b} follows from  \eqref{connectiontofenchelfirstmainineq} in Theorem~\ref{simplecomposite} and \eqref{Ofenchel:ineqs:c} follows from \eqref{differentiableobjlemmaequiv:a} in Proposition~\ref{differentiableobjlemma}.

If  \eqref{Ofencheleq} holds at equality, then all the inequalities in \eqref{Ofenchel:ineqs} also hold at equality, so in particular 
\begin{subequations}
\begin{align}
\label{Ofenchel:ineqs2:a}
    \nabla h\left(\bar{w}^i\right)\cdot \bar{x}-h^*\left(\nabla h\left(\bar{w}^i\right)\right)&=  g_i\left(F^i(\bar{x})\right)+g_i^*\left({y}^i\right)\\
    \label{Ofenchel:ineqs2:c}
    &\geq {y}^i\cdot F^i(\bar{x})=h(\bar{x}),
\end{align}
\end{subequations}
where \eqref{Ofenchel:ineqs2:a} follows from equality between the left hand side of \eqref{Ofenchel:ineqs:a} and the right hand side of \eqref{Ofenchel:ineqs:b}, and \eqref{Ofenchel:ineqs2:c} follows from Fenchel's inequality for $g_i$. Then $h(\bar{x})+h^*\left(\nabla h\left(\bar{w}^i\right)\right)=\nabla h\left(\bar{w}^i\right)\cdot \bar{x}$, which by \eqref{differentiableobjlemmaequiv:b} in Proposition~\ref{differentiableobjlemma} (for $v=\nabla h\left(\bar{w}^i\right)$) implies 
$\nabla F^i\left(\bar{w}^i\right)^T{y}^i=\nabla h\left(\bar{w}^i\right)=\nabla h\left(\bar{x}\right)=\nabla F^i\left(\bar{x}\right)^T{y}^i$.
Then, by \eqref{OfenchelOeq} we also have $F^i_\nabla\left({y}^i, \bar{w}^i\right)=F^i_\nabla\left({y}^i,\bar{x}\right)$,  $\mathcal{O}_{F^i,g_i}\left({y}^i, \bar{w}^i\right)=\mathcal{O}_{F^i,g_i}\left({y}^i, \bar{x}\right)$ and hence  \eqref{Ofencheleq} also holds at equality for $\bar{w}^i=\bar{x}$.

For \eqref{Ofenchelhorizon} note that by \eqref{connectiontofenchelcoroineq} in  Corollary~\ref{connectiontofenchelcoro}
$ \operatorname{dom}\left(\left(g_i \circ F^i\right)^*\right)\supseteq \bigcup_{y^i\in \operatorname{dom}\left(g_i^*\right),w^i\in  \mathbb{R}^J} \left\{\nabla F^i\left(w^i\right)^T y^i\right\}$.
Then $\sigma_{\operatorname{dom}\left(\left(g_i \circ F^i\right)^*\right)}\left(\hat{x}\right)\geq \sup_{w^i\in  \mathbb{R}^J} \sigma_{\operatorname{dom}\left(g_i^*\right)}\left(\nabla F^i\left(w^i\right)\hat{x}\right)$
and the result follows because $\sigma_{\operatorname{dom}\left(\left(g_i \circ F^i\right)^*\right)}= \left(g_i\circ F^i\right)_\infty$ by 
\cite[Theorem 11.5]{rockafellar2009variational}.

For \eqref{Ofencheldomain} let $h^1_{w^i}\left(y^i\right)=\left(g_i\circ F^i\right)^*\left(\nabla F^i({w^i})^T {y^i}\right)$ and $h^2_{w^i}\left(y^i\right)=\mathcal{O}_{F^i,g_i}\left({y^i}, {w^i}\right)$. Then by \eqref{connectiontofenchelcoroineq} in  Corollary~\ref{connectiontofenchelcoro}
$h_{w^i}^1\left(y^i\right)\leq h_{w^i}^2\left(y^i\right)$
for all $w^i\in \mathbb{R}^J$ and $y^i\in \mathbb{R}^{M_i}$. In addition $h_{w^i}^1$ and $h_{w^i}^1$ are proper closed convex functions so by Lemma~\ref{horizonineqlemma} we also have
$\left(h_{w^i}^1\right)_\infty\left(y^i\right)\leq\left(h_{w^i}^2\right)_\infty\left(y^i\right)$.
The result follows by noting that by \cite[Proposition B.3.2.8]{hiriart2004fundamentals} we have  $\left(h_{w^i}^2\right)_\infty\left(y^i\right)=F^i_\nabla\left({y}^i, {w}^i\right)+\left(g^*_i\right)_\infty\left({y}^i\right)$ and $(h_{w^i}^1)_\infty\left(y^i\right)=\left(\left(g_i\circ F^i\right)^*\right)_\infty\left(\nabla F({w^i})^T {y^i}\right)$, and that by \cite[Theorem 11.5]{rockafellar2009variational} $\left(\left(g_i\circ F^i\right)^*\right)_\infty\left(\nabla F({w^i})^T {y^i}\right)=\sigma_{\operatorname{dom}\left(g_i\circ F^i\right)}\left(\nabla F^i\left({w}^i\right)^T {y}^i\right)$.\Halmos
\end{proof}

\ifrepeattheo
\attainmentlemma*
\fi
\begin{proof}{\textbf{Proof of Lemma~\ref{attainmentlemma}}}
\label{attainmentlemma:proof}
For the functions in Table~\ref{initialfunctions} we have that $F^i$ is an affine or convex quadratic function. Then the objective function $\bar{v}^i\cdot w^i-\bar{y}^i\cdot F^i\left(w^i\right)$ of \eqref{connectiontofenchelcoroineqopt} is an affine or concave quadratic function whose unconstrained suppremum is always attained when finite.
\Halmos
\end{proof}

\ACKNOWLEDGMENT{The authors would like to thank Robert Fourer for pointing out the connection between Rockafellar's monotropic programming duality \cite{rockafellar1981monotropic} and the piecewise linear dual \eqref{standard:ub:dual:lp} for LPs with lower and upper variable bounds (e.g \cite{fourer94}).}

\bibliographystyle{informs2014}
\bibliography{references}
\renewcommand{\theHsection}{A\arabic{section}}
\begin{APPENDICES}
\section{Changes for maximization problems}\label{max:sec}
A maximization version of primal problem  \eqref{primalproblem} is given by
\begin{equation}\label{primalproblem:max}
    \operatorname{opt}_P:=\sup_{x\in \mathbb{R}^J} \; c\cdot x - \sum_{i\in I} g_i\left(F^i(x)\right)
\end{equation}
and the associated version of dual problem \eqref{dualproblem} becomes
\begin{subequations}\label{dualproblem:max}
\begin{alignat}{3}
\operatorname{opt}_D:=&\inf_{y\in \mathbb{R}^M, w\in \mathbb{R}^{I\times J}}\quad&   \sum_{i \in I}    F^i_\nabla\left(y^i, w^i\right)&+g^*_i\left(y^i\right) &\\\
&s.t.&&\notag\\
&& \sum_{i\in I} \nabla F(w^i)^T y^i&= c. 
\end{alignat}
\end{subequations}
Primal objective \eqref{primalobjmin} becomes 
\[\operatorname{obj_P}(\bar{x}):=c\cdot \bar{x}-\sum_{i\in I} g_i^0\left(F^i\left(\bar{x}\right)\right)\]
and dual objective \eqref{dualobjectivemin} become
\[\operatorname{obj_D}(\bar{y},\bar{w}):=\sum_{i \in I} \mathcal{O}_{F^i,g_i}^0(\bar{y}^i, \bar{w}^i).\]

Weak duality inequalities \eqref{weakdualityone}/\eqref{weakdualitytwo} become 
\begin{align*}c\cdot \bar{x}-\sum_{i\in I} g_i\left(F^i\left(\bar{x}\right)\right)&\leq \sum_{i \in I} \mathcal{O}_{F^i,g_i}(\bar{y}^i, \bar{w}^i),\\
\operatorname{obj_P}(\bar{x})&\leq \operatorname{obj_D}(\bar{y},\bar{w}),
\end{align*}
optimality condition \eqref{optimaliticonditioneq} becomes
\begin{equation}\label{optimaliticonditioneqmax}g_i\left(F^i\left(\bar{x}\right)\right)+\mathcal{O}_{F^i,g_i}(\bar{y}^i, \bar{w}^i)=\left(\nabla F^i\left(\bar{w}^i\right)^T \bar{y}^i\right)\cdot\bar{x},\end{equation}
and complementary slackness condition \eqref{cscondition} becomes \begin{equation}\label{csconditionmax}
g_i^0\left(F^i\left(\bar{x}\right)\right)+\mathcal{O}^0_{F^i,g_i}(\bar{y}^i, \bar{w}^i)=\left(\nabla F^i\left(\bar{w}^i\right)^T \bar{y}^i\right)\cdot\bar{x}.
\end{equation}

Similarly the ray definitions become 
\begin{equation}\label{primalraydefmax}c\cdot \hat{x}-\sum_{i\in I} (g_i\circ F^i)_\infty(\hat{x})>0,\end{equation}
and dual ray definition \eqref{dualraydef} becomes
\begin{subequations}\label{dualraydefmax}
\begin{align}
\sum_{i\in I} \nabla F^i(\hat{w}^i)^T \hat{y}^i&= 0\label{dualrayeqmax}\\ 
    \sum_{i \in I} F^i_\nabla(\hat{y}^i, \hat{w}^i) +(g_i^*)_\infty\left(\hat{y}^i\right)&< 0.\label{dualrayineqmax}
\end{align}
 \end{subequations}

\subsection{Conic constraints (including one-sided linear inequalities)}\label{conicsectionmax}

Maximization primal problem \eqref{primalproblem:max} with all $F^i$ and $g_i$ as in Proposition~\ref{conicspecific} reduces to 
\begin{align*}
\operatorname{opt}_P:&=
\begin{alignedat}[t]{3}
&\sup_{x\in \mathbb{R}^J}& c\cdot x&\\
&s.t.&&\\
&& A^ix-b^i&\in K_i \quad \forall i\in I
\end{alignedat}
\end{align*}
and the associated dual problem  \eqref{dualproblem:max} reduces to
\begin{alignat*}{3}
\operatorname{opt}_D:=
&\inf_{y\in \mathbb{R}^M}&  \sum_{i \in I} b^i\cdot y^i  & \\\
&s.t.&&\notag\\
&& \sum_{i\in I} (A^i)^{T}y^i&= c,\quad y^i&\in K_i^\circ\quad \forall i \in I.
\end{alignat*}
As expected, because $F^i$ is an affine function, variables $w^i$ do not appear in this dual. 

Primal feasibility becomes the standard conic primal feasibility $A^ix-b^i\in K_i$ for all $i\in I$, and dual feasibility becomes the standard conic dual feasibility $\sum_{i\in I} (A^i)^{T}y^i= c$ and $y^i\in K_i^\circ$ for all $i\in I$. Complementary slackness is the standard conic complementary slackness 
$\bar{y}^i\cdot (A^i \bar{x} -b^i)=0$  for all  $i \in I$. In addition, primal ray definition \eqref{primalraydefmax} becomes the standard conic primal ray condition
\begin{equation*}A^i\hat{x}\in K_i\quad\forall i\in I \quad\text{and}\quad c\cdot \hat{x}>0,\end{equation*}
and dual ray definition \eqref{dualraydefmax} becomes the standard conic dual ray condition
\begin{align*}
    \sum_{i\in I} (A^i)^{T}\hat{y}^i &=0,\quad \hat{y}^i\in K_i^\circ\quad\forall i \in I \\ \sum_{i \in I} b^i\cdot \hat{y}^i &<0.
\end{align*}
Again, we can see that variables $w^i$ are also not needed to describe dual rays.

\subsection{Two-sided (or ranged) linear inequalities}

Primal problem \eqref{primalproblem:max} with all $F^i$ and $g_i$ as in Proposition~\ref{twosidedspecific} reduces to 
\begin{alignat*}{3}
\operatorname{opt}_P:=
&\sup_{x\in \mathbb{R}^J}& c\cdot x&\\
&s.t.&&\notag\\
&& l_i\leq a^i\cdot x&\leq u_i \quad \forall i \in I  
\end{alignat*}
and the associated dual problem  \eqref{dualproblem:max} reduces to
\begin{alignat*}{3}
\operatorname{opt}_D:=
&\inf_{y\in \mathbb{R}^M}& \quad \sum_{i \in I} \max\{l_i y_i,&u_i y_i\}    \\\
&s.t.&&\notag\\
&& \sum_{i\in I} a^iy_i&= c.
\end{alignat*}
As expected, because $F^i$ is an linear function, variables $w^i$ do not appear in this dual. 

Primal feasibility becomes  $l_i\leq a^i\cdot x\leq u_i$ for all $i\in I$, all module-wise dual solutions $\bar{y}_i$ are feasible, and feasibility of a full dual solution $\bar{y}$ reduces to satisfying stationarity condition $\sum_{i\in I} a^iy_i= c$. Complementary slackness becomes 
\begin{equation*}
    \max\{l_i\bar{y}_i,u_i\bar{y}_i\}=\bar{y}_i a^i\cdot \bar{x} \quad \forall i \in I.
\end{equation*}

Finally, primal ray definition \eqref{primalraydefmax} becomes
\begin{equation*}a^i\cdot\hat{x}=0\quad\forall i\in I \quad\text{and}\quad c\cdot \hat{x}>0,\end{equation*}
and dual ray definition \eqref{dualraydefmax} becomes
\begin{equation*}
     \sum_{i\in I} a^i\hat{y}_i =0 \quad\text{ and }\quad  \sum_{i \in I} \max\{l_i \hat{y}_i,u_i \hat{y}_i\}   <0.
\end{equation*}
Again, we can see that variables $w^i$ are also not needed to describe dual rays.

\subsection{Quadratic Objective}

Primal problem \eqref{primalproblem:max} for $I=\{i_0\}$ and $F^{i_0}$ and $g_{i_0}$ as in Proposition~\ref{simplequadraticobjlemma} reduces to 
\begin{equation*}
\operatorname{opt}_P:=\sup_{x\in \mathbb{R}^J}  c\cdot x - h_{{i}_0}(x)=\sup_{x\in \mathbb{R}^J}  c\cdot x - g_{i_0}\left(F^{i_0}(x)\right)
\end{equation*}
and the associated dual problem  \eqref{dualproblem:max} reduces to
\begin{subequations}\label{quadobjdual:max}
\begin{alignat}{3}
\operatorname{opt}_D:
=&\inf_{\substack{y_{i_0}\in \mathbb{R},\\ w^{i_0}\in \mathbb{R}^J}}\quad&   y_{i_0}h_{i_0}\left(w^{i_0}\right)\\
&s.t.&&\notag\\
&&y_{i_0} \nabla h\left(w^{i_0}\right)&= c,\quad y_{i_0}=1.
\label{quadobjdualfinal:max}
\end{alignat}
\end{subequations}
Because $F^i$ is not an affine function, variables $w^i$ do appear in \eqref{quadobjdual:max}. 

All primal solutions are feasible (as the primal problem is unconstrained linear plus quadratic minimization), dual feasibility is $\bar{y}_{i_0}=1$, and complementary slackness is 
\begin{equation}\label{quadobjcs:max}
    h_{i_0}(\bar{x})+
    \bar{y}_{i_0}h_{i_0}\left(\bar{w}^{i_0}\right)=\bar{y}_{i_0} \nabla h_{i_0}\left(\bar{w}^{i_0}\right)\cdot\bar{x}.
\end{equation}
In addition, primal ray definition \eqref{primalraydefmax} becomes
$\nabla h_{i_0}\left(\hat{x}\right)=0$  and $c\cdot \hat{x}>0$. Similarly,
 dual ray definition \eqref{dualraydefmax} becomes
 \begin{equation*}
     \hat{y}_{i_0}\nabla h\left(\hat{w}^{i_0}\right) =0,\quad  \hat{y}_{i_0} =0,\quad  \hat{y}_{i_0}h_{i_0}\left(\hat{w}^{i_0}\right)   <0.
\end{equation*}
In particular, we can check that a dual ray does not exist as expected from the problem being unconstrained.

Similarly to Section~\ref{quadraticobjsec} to get a \emph{clearly} convex re-formulation of the dual problem we can eliminate variable $y_{i_0}$ using the dual feasibility equation $y_{i_0}=1$ to obtain the more familiar dual problem (e.g. see \cite[(3)]{vandenberghe2010cvxopt} and \cite[Section 6.3]{roos2020universal}) given by
\begin{subequations}
    \begin{alignat}{3}
    \label{quadobjdualconvex:max}
\operatorname{opt}_D:=&\inf_{ w^{i_0}\in \mathbb{R}^J}\quad&   h_{i_0}\left(w^{i_0}\right)\\
&s.t.&&\notag\\
&&\nabla h_{i_0}\left(w^{i_0}\right) &= c
\end{alignat}
\end{subequations}
This reformulation does not need variable $y_{i_0}$ and all its dual solutions are module-wise feasible (i.e. dual feasibility reduces to the stationarity equation). In addition, complementary slackness conditions \eqref{quadobjcs:max} become
$h_{i_0}(\bar{x})+h_{i_0}\left(\bar{w}^{i_0}\right)= \nabla h_{i_0}\left(\bar{w}^{i_0}\right)\cdot\bar{x}$,
which, by Lemma~\ref{quadraticform:lemma}, is equivalent to
\begin{equation}\label{quadobjcsfinal:max}
   \left( \nabla h_{i_0}\left(\bar{x}\right)-\nabla h_{i_0}\left(\bar{w}^{i_0}\right)\right)\cdot\left(\bar{x}-\bar{w}^{i_0}\right)=0.
\end{equation}
In particular, complementary slackness condition \eqref{quadobjcsfinal:max} holds if $\bar{x}=\bar{w}^{i_0}$ in which case the stationarity conditions become the standard optimality conditions for linear minus quadratic unconstrained maximization given by $c=\nabla h_{i_0}\left(\bar{x}\right)$. However, the complementary slackness connection between $\bar{x}$ and $\bar{w}^{i_0}$ can also be achieved through $\nabla h_{i_0}\left(\bar{x}\right)=\nabla h_{i_0}\left(\bar{w}^{i_0}\right)$, which may be weaker than $\bar{x}=\bar{w}^{i_0}$ when $\nabla^2 h$ is singular.

\subsection{Quadratic Constraints}

Primal problem \eqref{primalproblem:max} for $I=\{i_0\}$ and $F^{i_0}$ and $g_{i_0}$ as in Proposition~\ref{quadraticconstrlemma} reduces to 
\begin{align*}
\operatorname{opt}_P:&=\begin{alignedat}[t]{3}&\sup_{x\in \mathbb{R}^J}\quad&   c\cdot x\\
&s.t.&&\\
&&h_{i_0}(x)+q^{i_0}\cdot x &\leq u_{i_0}&
\end{alignedat}
\end{align*}
and the associated dual problem  \eqref{dualproblem:max} reduces to
\begin{subequations}
\label{quadconstrdualtoref:max}
\begin{align}
\operatorname{opt}_D:
=&\inf_{{y_{i_0}\in \mathbb{R}, w^{i_0}\in \mathbb{R}^J}}    \left(h_{i_0}\left(w^{i_0}\right)+u_{i_0}\right)y_{i_0}\\
&s.t.\notag\\
&\quad\quad y_{i_0} \left(\nabla h_{i_0}\left(w^{i_0}\right)+q^{i_0}\right)= c,\quad y_{i_0}\geq 0.
\end{align}
\end{subequations}
Because $F^i$ is not an affine function, variables $w^i$ do appear in \eqref{quadconstrdualtoref:max}. 

Primal feasibility becomes  $h_{i_0}(\bar{x})+q^{i_0}\cdot \bar{x} \leq u_{i_0}$, dual feasibility is $\bar{y}_{i_0}\geq 0$ plus $y_{i_0} \left(\nabla h_{i_0}\left(w^{i_0}\right)+q^{i_0}\right)= c$  and and using \cref{quadraticform:lemma} we can write complementary slackness as 
\begin{align}
\notag
\bar{y}_{i_0} \left(h\left(\bar{w}^{i_0}\right)\right.&+\left.q^{i_0}\cdot \bar{w}^{i_0} -u_{i_0}\right)\\&=\bar{y}_{i_0}\left(\nabla h\left(\bar{w}^{i_0}\right)+q^{i_0}\right)\cdot\left(\bar{w}^{i_0}-\bar{x}\right).\label{quadraticconstcs:max}
\end{align}
In addition, primal ray definition \eqref{primalraydefmax} becomes
\begin{equation}
\nabla h_{i_0}\left(\hat{x}\right)=0, \quad q^{i_0}\cdot \hat{x}\leq 0 \quad\text{and}\quad c\cdot \hat{x}>0,\end{equation}
and dual ray definition \eqref{dualraydefmax} becomes
$\hat{y}_{i_0} \left(\nabla h_{i_0}\left(\hat{y}^{i_0}\right)+q^{i_0}\right)=0$,     $\hat{y}_{i_0} \geq 0$ and $\left(h_{i_0}\left(\hat{y}^{i_0}\right)+u_{i_0}\right)\hat{y}_{i_0}  <0$.

Finally, similar to Section~\ref{quadraticconstsec}, we can use  \cref{quadraticformulationlemma} to reformulate \eqref{quadconstrdualtoref:max}  as the convex optimization problem 
\begin{subequations}\label{quadconstrdualconv:max}
\begin{alignat}{3}
\operatorname{opt}_D:=&\inf_{\substack{y_{i_0}\in \mathbb{R},\\ z^{i_0}\in \mathbb{R}^J}}    \mathcal{G}_{h^{i_0}}\left(z^{i_0}, y_{i_0}\right)+u_{i_0}y_{i_0}\\
&s.t.\notag\\
&\quad\quad  \nabla h_{i_0}\left(z^{i_0}\right)+q^{i_0}y_{i_0}= c,\quad y_{i_0}\geq 0,
\end{alignat}
\end{subequations}
and we can convert between the solution for \eqref{quadconstrdualtoref:max} and \eqref{quadconstrdualconv:max} through 
\begin{equation*}
z^i\left(w^i,y_i\right):=y_i w^i,\quad
w^i\left(z^i,y_i\right):=\begin{cases}z^i/y_i &y_i>0 \\0&\text{o.w.}\end{cases}.
    \end{equation*}

\end{APPENDICES}
\end{document}